\theoremstyle{plain}
\newtheorem{thm}{Theorem}[section]
\newtheorem{cor}[thm]{Corollary}
\newtheorem{lem}[thm]{Lemma}
\newtheorem{sublem}[thm]{Sublemma}
\newtheorem{propo}[thm]{Proposition}
\newtheorem{mainthm}{Theorem}
\theoremstyle{definition}
\theoremstyle{remark}
\newtheorem*{rem}{Remark}
\renewcommand{\u}{\cup} 
\newcommand{\sm}{\setminus}
\newcommand{\ci}{\subseteq} 
\newcommand{\tas}{\text{as }}
\newcommand{\tif}{\text{if }}
\newcommand{\tand}{\text{ and }}
\newcommand{\tfor}{\text{for }}
\newcommand{\twith}{\text{with }}
\newcommand{\ga}{\alpha}
\newcommand{\gb}{\beta}
\newcommand{\gep}{\varepsilon}
\newcommand{\gf}{\varphi}
\newcommand{\gga}{\gamma}
\newcommand{\gh}{\eta}
\newcommand{\gi}{\iota}
\newcommand{\gl}{\lambda}
\newcommand{\gm}{\mu}
\newcommand{\gp}{\pi}
\newcommand{\gs}{\sigma}
\newcommand{\gt}{\tau}
\newcommand{\gx}{\xi}
\newcommand{\gy}{\psi}
\newcommand{\gz}{\zeta}
\newcommand{\gD}{\Delta}
\newcommand{\gF}{\Phi}
\newcommand{\gG}{\Gamma}
\newcommand{\gO}{\Omega}
\newcommand{\gP}{\Pi}
\newcommand{\gU}{\Upsilon}
\newcommand{\gY}{\Psi}
\def\B#1{\textbf{#1}}
\newcommand{\C}[1]{{\mathcal{#1}}} 
\newcommand{\D}[1]{{\mathbb{#1}}} 
\newcommand{\R}{\D{R}}
\newcommand{\Q}{\D{Q}}
\newcommand{\N}{\D{N}}
\newcommand{\Z}{\D{Z}}
\newcommand{\CC}{\D{C}}
\newcommand{\ol}{\overline}
\newcommand{\q}{\quad}
\newcommand{\qq}{\qquad}
\newcommand{\wt}[1]{{\widetilde{#1}}}
\newcommand{\pc}{\mathcal{PC}}
\newcommand{\im}{\operatorname{Im}}
\newcommand{\dil}{\operatorname{dil}}
\newcommand{\Dil}{\operatorname{Dil}}
\newcommand{\re}{\operatorname{Re}}
\newcommand{\Dom}{\operatorname{Dom}}
\newcommand{\darun}{\operatorname{int}\,}   
\newcommand{\IS}{\mathcal{I}\hspace{-.5pt}\mathcal{S}}
\newcommand{\QIS}{\mathcal{Q}\hspace*{-.5pt}\mathcal{I}\hspace{-.5pt}\mathcal{S}}
\newcommand{\Csh}{\mathcal{C}^\sharp}
\newcommand{\ex}{\operatorname{\mathbb{E}xp}}
\newcommand{\ea}{e^{2\pi \ga \mathbf{i}}}
\newcommand{\cv}{\textup{cv}}
\newcommand{\cp}{\textup{cp}}
\newcommand{\co}[1]{^{\circ {#1}}}
\newcommand{\Td}{\operatorname{d_{\text{Teich}}}}
\newcommand{\HT}{\operatorname{HT}}
\newcommand{\SD}{\operatorname{S}}
\renewcommand{\mod}{\operatorname{mod}}
\newcommand{\ds}{\displaystyle}
\newcommand{\on}{\operatorname}
\newcommand{\ov}{\overline}
\newcommand{\cal}{\mathcal}
\newcommand{\Frac}{\on{Frac}}
\newcommand{\Pol}{\cal{P}}
\newcommand{\tend}{\longrightarrow}
\newcommand{\bEA}{\begin{eqnarray*}}
\newcommand{\eEA}{\end{eqnarray*}}
\newcommand{\bEAn}{\begin{eqnarray}}
\newcommand{\eEAn}{\end{eqnarray}}
\newcommand{\setof}[2]{\big\{{#1}\,\big|\,{#2}\big\}}
\begin{document}
\title[A proof of Marmi Moussa Yoccoz conjecture for high types]
{A Proof of the Marmi-Moussa-Yoccoz conjecture for rotation numbers of high~type}
\author{Davoud Cheraghi}
\address[Davoud Cheraghi]{Department of Mathematics, Imperial College London, UK}
\email[Davoud Cheraghi]{d.cheraghi@imperial.ac.uk}
\author{Arnaud Ch\'eritat}
\address[Arnaud Ch\'eritat]{Institut de Math\'ematiques de Toulouse; UMR5219
Universit\'e de Toulouse; CNRS
UPS IMT, F-31062 Toulouse Cedex 9, France \\ \and
Institut de Math\'ematiques de Bordeaux; UMR5251
Universit\'e de Bordeaux; CNRS
351, cours de la Lib\'eration - F 33405 TALENCE cedex}
\email[Arnaud Ch\'eritat]{arnaud.cheritat@math.univ-toulouse.fr}
\keywords{}
\subjclass{Primary 37F50,  Secondary 35J60, 30E20, 11A55}
\date{\today}

\begin{abstract}
Marmi Moussa and Yoccoz conjectured that some error function $\Upsilon$, related to the approximation of the 
size of Siegel disk by some arithmetic function of the rotation number $\theta$, is a H\"older continuous function 
of $\theta$ with exponent $1/2$. 
Using the renormalization invariant class of Inou and Shishikura, we prove this conjecture for the restriction 
of $\Upsilon$ to a class of high type numbers.
\end{abstract}
\maketitle
\section{Introduction}\label{S:Intro}

A \emph{Siegel disk} of a complex one dimensional dynamical system $z\mapsto f(z)$ is a maximal open set $\Delta$ on which $f$ is conjugate to a rotation on a disk. There is a unique fixed point inside $\Delta$, and its eigenvalue for $f$ is equal to $e^{2 \pi i\alpha}$ for some $\alpha\in\R$ called the \emph{rotation number}.
For a Siegel disk contained in $\mathbb{C}$ and whose fixed point is denoted $a$, we define its \emph{conformal radius} as the unique $r\in(0,+\infty]$ such that there exists a conformal diffeomorphism $\phi : B(0,r) \to \Delta$ with $\phi(0)=a$ and $\phi'(0)=1$. 
Since the self conformal diffeomorphisms of $\mathbb{C}$ and of the unit disk are well known, it is not hard to see that such a $\phi$ necessarily conjugates $f$ to the rotation $z\mapsto e^{2\pi i\alpha}z$, i.e.\ it is a \emph{linearizing map}.

Given a holomorphic map $f$ with a fixed point $a\in\mathbb{C}$, if $f'(a)=e^{2\pi i\alpha}$ for some $\alpha\in\R$ we may wonder if $f$ has a Siegel disk centered on $a$, i.e.\ if it is linearizable. 
This is a subtle question. We will skip its long and interesting history (see \cite{M}, Section~11) and jump to the matter needed here.

For irrational numbers $\alpha\in\R$, Yoccoz defined the \textit{Brjuno function} 
\[ B(\alpha) = \sum_{n=0}^{+\infty} \beta_{n-1} \log\frac{1}{\alpha_n} \in (0,+\infty]
.\]
Let us explain what these numbers are. 
The sequence $\alpha_n\in (0,1)$ is the sequence associated to the continued fraction algorithm:
$\alpha_0 = \Frac(\alpha)$ and $\alpha_n=\Frac(1/\alpha_n)$. And $\beta_n = \alpha_0 \cdots \alpha_n$, with the convention that $\beta_{-1}=1$. 
The set of \emph{Brjuno numbers} is the set 
\[ \cal B=\setof{\alpha\in\R\setminus\Q}{B(\alpha)<+\infty}
.\]
This is a dense subset of $\mathbb{R}$ of full Lebesgue measure.

Consider now the degree two polynomial $P_\alpha(z)=e^{2\pi i\alpha}z+z^2$. It is considered as one of the simplest non linear examples one can think of. Depending on $\alpha$, it may or may not have a Siegel disk. If it does, let $r(\alpha)$ denote the conformal radius. Otherwise, let $r(\alpha)=0$. 
Fatou and Julia knew that for $\alpha$ rational, $r(\alpha)=0$.
Yoccoz \cite{Yoc95} has completely characterized the set of irrational numbers for 
which $r(\alpha)>0$: it coincides with the set of Brjuno numbers.
Moreover, he provided a good approximation of $r(\alpha)$: he showed that $r(\alpha)> e^{-B(\alpha)-C}$, 
for some constant $C>0$. 
He almost showed that $r(\alpha)< e^{-B(\alpha)+C}$, for some constant $C>0$. 
Two proofs for the latter were given by Buff and Ch\'eritat in \cite{BC04} and \cite{BC11}.

Yoccoz and Marmi considered the error function
\[\Upsilon(\alpha)=\log(r(\alpha))+B(\alpha)
.\]
Computer experiments made by Marmi \cite{Mar89} revealed a continuous graph for $\Upsilon$. It was proved in \cite{BC06b} that $\Upsilon$ is the restriction to $\cal B$ of a continuous function over $\R$.
In \cite{MMY97}, Marmi, Moussa and Yoccoz conjectured that $\Upsilon$ is in fact $1/2$-H\"older continuous. 
Somehow, it cannot be better: it was proved by Buff and Ch\'eritat in \cite{HDR08} that $\Upsilon$ cannot be $a$-H\"older continuous for $a>1/2$. 
Numerical studies carried out in \cite{Carletti03} support the conjecture. 
Here we will prove that the restriction of $\Upsilon$ to some Cantor set of rotation numbers is indeed $1/2$-H\"older continuous.

In 2002, Inou and Shishikura constructed a class of maps $\IS$ that is invariant under 
some renormalization operator, \cite{IS06}. 
This provides a powerful tool that may be used to obtain a deep understanding of the 
dynamics of the maps in $\IS$.
It was used in 2006 to prove the existence of quadratic polynomials with positive area Julia 
sets \cite{BC12}. 
This renormalization scheme requires the rotation number to be of \emph{high type}, i.e. 
that all its continued fraction entries are at least $N$, where $N$ is some constant introduced by 
Inou and Shishikura. 

To be more precise, Inou and Shishikura use \emph{modified continued fractions}, which is almost the same as the standard ones 
(see the discussion in Section~\ref{sec:mcf}).
Let $\alpha = a_0 \pm 1/(a_1 \pm 1/(a_2 \pm \cdots$ be the modified continued fraction, with $a_n\geq 2$, for 
$\forall n\geq 1$. 
For $N\geq 2$ let
\[\HT_N = \setof{\alpha\in\R\setminus\Q}{\forall n>0, a_n\geq N}\]
be the set of high type numbers associated to $N$. 
Then there exists $N_0\geq 2$ such that maps in $\IS$ and with rotation number in 
$\HT_{N_0}$ are infinitely renormalizable.
A value for $N_0$ is not explicitly computed in \cite{IS06}, only its existence is proved. 

A detailed quantitative analysis of the Inou-Shishikura renormalization scheme has been 
carried out by the first author in \cite{Ch10-I} and \cite{Ch10-II}. 
In particular, a new analytic approach is introduced in these papers that allows fine control on 
the geometric quantities that appear in the renormalization. 
These have resulted in a fine control on the post-critical sets of the maps \cite{Ch10-I, Ch10-II}, 
and are used to describe the asymptotic distribution of the orbits \cite{AC12}. 
The flexibility of the scheme and the techniques allow one to study the dynamics of $P_\ga$ for small 
perturbations of $\ga$ away from the real line. 
This has led to a breakthrough on the MLC conjecture (local connectivity of the Mandelbrot set)
by Cheraghi and Shishikura \cite{ChSh14}. 

The object of the present paper is to prove the following theorem:

\begin{mainthm}
There exists a constant $N$ such that the restriction of $\Upsilon$ to $\HT_{N}$ is $1/2$-H\"older continuous.
\end{mainthm}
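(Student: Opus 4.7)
The plan is to realize $\Upsilon$ as the sum of a geometrically convergent series of ``renormalization defects'' and to control the series by combining compactness/analyticity of the Inou--Shishikura renormalization with a Diophantine estimate for modified continued fractions. Fix $\alpha \in \HT_N$ with $N \geq N_0$, and for a canonical choice $f_\alpha \in \IS$ with rotation number $\alpha$, set $f_n := \C{R}^n f_\alpha \in \IS$, with rotation number (up to sign) $\alpha_n$. Comparing the Siegel linearizer of $f_n$ to that of $f_{n+1}$ through the Fatou coordinate defining $\C{R}$ should yield a functional equation
\begin{equation*}
\log r(f_n) \;=\; -\log\tfrac{1}{\alpha_n} \,+\, \alpha_n \log r(f_{n+1}) \,+\, \psi(f_n),
\end{equation*}
for a bounded functional $\psi \colon \IS \to \R$ with controlled modulus of continuity. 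This is Yoccoz's two-sided inequality upgraded to an equation with an explicit defect. Iterating and telescoping against $Y(\alpha) = \sum_{n} \beta_{n-1}\log(1/\alpha_n)$ yields
\begin{equation*}
\Upsilon(\alpha) \;=\; \log r(f_\alpha) + Y(\alpha) \;=\; \sum_{n\geq 0}\beta_{n-1}\,\psi(f_n),
\end{equation*}
and the series converges geometrically because $\beta_n \leq N^{-n}$ on $\HT_N$ and $\psi$ is bounded by compactness of $\IS$.

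To compare $\Upsilon(\alpha)$ and $\Upsilon(\alpha')$ for $\alpha,\alpha'\in\HT_N$, let $k$ be the first index at which the modified continued fraction entries of $\alpha$ and $\alpha'$ differ. The standard Diophantine estimate combined with $a_n\geq N$ gives $|\alpha-\alpha'|\asymp \beta_{k-1}\beta_k$, and in particular
\begin{equation*}
\beta_{k-1}\leq C_N|\alpha-\alpha'|^{1/2}.
\end{equation*}
For $n\geq k$ the trivial bound $|\psi(f_n)-\psi(f'_n)|\leq 2\|\psi\|_\infty$ together with $\sum_{n\geq k}\beta_{n-1}\leq C\beta_{k-1}$ (using $\alpha_m\leq 1/N$) gives a contribution of order $\beta_{k-1}$. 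For $n<k$ I would exploit the analytic dependence of the renormalization on the rotation number: the family $\alpha\mapsto f_n$ is holomorphic and its $\alpha$-derivative has the size of the derivative of the $n$-fold modified Gauss map, namely $\sim 1/\beta_{n-1}^2$, so that $|\psi(f_n)-\psi(f'_n)|\leq C|\alpha-\alpha'|/\beta_{n-1}^2$ and
\begin{equation*}
\sum_{n<k}\beta_{n-1}\,|\psi(f_n)-\psi(f'_n)|\;\leq\; C|\alpha-\alpha'|\sum_{n<k}\beta_{n-1}^{-1}\;\leq\; C'|\alpha-\alpha'|/\beta_{k-1}.
\end{equation*}
Both contributions are of the form $\beta_{k-1}+|\alpha-\alpha'|/\beta_{k-1}$, and the estimate $\beta_{k-1}\asymp|\alpha-\alpha'|^{1/2}$ makes both terms $\lesssim |\alpha-\alpha'|^{1/2}$.

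The main obstacle is two-fold. First, I need the functional equation with $\psi$ of good regularity on $\IS$; this requires a careful comparison of the Siegel linearizers at two consecutive levels, interpreting the conformal radius via the domain of the Inou--Shishikura Fatou coordinate and transporting it through $\C{R}$ with a uniform distortion. Second, and more delicate, the estimate must be upgraded to a \emph{parametric} statement: one must view $\alpha\mapsto f_\alpha$ as a holomorphic family and show that the iterated renormalization $\alpha\mapsto \C{R}^n f_\alpha$ has $\alpha$-derivative of the expected order $1/\beta_{n-1}^2$, uniformly in $\alpha\in\HT_N$. The high-type hypothesis $a_n\geq N$ is used throughout to keep the tower inside $\IS$, to make all constants uniform, and to force the geometric contraction $\beta_n\leq\beta_{n-1}/N$; the H\"older exponent $1/2$ comes entirely from the Diophantine identity $|\alpha-\alpha'|\asymp\beta_{k-1}\beta_k$ combined with this contraction.
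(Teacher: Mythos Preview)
Your outline is essentially the paper's proof: your $\psi$ is the paper's $C(f)=\Upsilon(f)-|\alpha(f)|\,\Upsilon(\cal{RS}(f))$, the series $\Upsilon=\sum_{k\geq 0}\beta_{k-1}C(f_k)$ is derived identically, and the split at the first differing continued-fraction level together with the Diophantine input $\beta_{k-1}\lesssim|\alpha-\alpha'|^{1/2}$ is exactly how Section~\ref{sec:arn} proceeds.

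There is one genuine technical point you gloss over. Your estimate for $n<k$ assumes $|\psi(f_n)-\psi(f'_n)|\leq C|\alpha_n-\alpha'_n|$, i.e.\ that $\psi$ is Euclidean-Lipschitz in the rotation number. The paper shows this fails: $C$ is only Lipschitz for the metric $ds=\big|\log|x|\big|\,|dx|$ (Proposition~\ref{P:Lipschitz}), because the $\alpha$-derivative of $C$ blows up like $\log(1/\alpha)$ as $\alpha\to 0$ (this comes from the Fatou-coordinate asymptotics, cf.\ Equations~\eqref{E:C(f)-expanded} and~\eqref{E:log-translation-main}). The repair is straightforward but must be done: use $d_{\log}(x,y)\leq M_a|x-y|^a$ for some fixed $a\in(1/2,1)$, and then verify the arithmetic lemma $\sum_j\beta_{j-1}|\alpha_j-\alpha'_j|^a\lesssim|\alpha-\alpha'|^{1/2}$ for such $a$ (Section~\ref{sss:bads}). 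Two smaller omissions: you drop the cross-term $\sum_k|\beta_{k-1}(\alpha)-\beta_{k-1}(\alpha')|\cdot|C(g_k)|$ (the $\alpha_j$'s differ for \emph{all} $j$, not only $j\geq k$, since $\alpha_j$ depends on the full tail of $\alpha$), handled in Section~\ref{subsec:secondterm}; and the nonlinearity direction is not a single ``$\alpha$-derivative of size $1/\beta_{n-1}^2$'' but is tracked via a separate contraction $d_T(\pi\cal{R}f,\pi\cal{R}g)\leq\lambda\,d_T(\pi f,\pi g)+K_2|\alpha(f)-\alpha(g)|$ with $\lambda<1$ (Proposition~\ref{P:contracting}), iterated and summed as in Section~\ref{subsec:firstterm}.
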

All the above mentioned consequences apply to maps that are infinitely renormalizable in 
the sense of Inou and Shishikura, and the above theorem will be no exception. 
Indeed, we will prove a stronger statement, see Theorem~\ref{thm:map} in Section~\ref{subsec:map}.

\medskip

The proof has two flavors, an analytic and an arithmetic. 
In the analytic part, we prove a form of Lipschitz dependence of the Inou-Shishikura 
renormalization of $f$ with respect to $f\in\IS$. 
More precisely, it is Lipschitz in the direction of the nonlinearity of $f$, but it is only Lipschitz 
with respect to a modified distance defined by $ds = |d\alpha|\times \big| \log|\alpha| \big|$ in 
the direction of rotation. 
This involves the analytic techniques developed by the first author in \cite{Ch10-I,Ch10-II}.
In the arithmetic part, we deduce H\"older continuity of an extension of $\Upsilon$ to $\IS$, 
using boundedness of $\Upsilon$ and applying the renormalization operator infinitely many times. 
This argument is very much in the spirit of \cite{MMY97}, and involves arithmetic estimates 
on the continued fraction that we reproduce here.

While the techniques used in this paper restrict the set of parameters that we can address, 
it is widely believed that there should be an analogous notion of renormalization scheme with 
similar qualitative features for which there is no assumption to make on the rotation number. 
So the arguments developed here might eventually be applied to the general case. 
On the other hand, a wide range of dynamical behaviors are present in the class of maps 
discussed here since irrational numbers of significant types, such as bounded type, 
Brjuno, non-Brjuno, Herman, non-Herman, and Liouville, are present in $\HT_{N}$. 

The article is divided as follows. Section~\ref{S:Intro} is the present introduction.
In Section~\ref{sec:mcf}, we recall modified continued fractions, the Brjuno functions 
$B=B_1$ and $B=B_{1/2}$, and make comments on different definitions of high 
type numbers.
In Section~\ref{sec:prelim}, we recall the Inou and Shishikura class of maps $\IS$, and the renormalization theorem.
In Section~\ref{S:lipschitz}, we study the dependence of the renormalization on the data.
In Section~\ref{sec:arn}, we use renormalization to prove H\"older continuity of $\Upsilon$.

\section{Modified continued fractions}\label{sec:mcf}

The terminology of \emph{modified continued fractions} might not be standard but it is a very old notion.
The connection with our work starts with the discussion in \cite{MMY97}. In their notations, what we call $\alpha$ they call $x$ and they use $\alpha$ as a parameter for a family of continued fraction algorithms. The one we present below corresponds to their continued fraction with parameter $\alpha=1/2$.

\subsection{Definition}\label{subsec:def}

\begin{figure}
\begin{center}\includegraphics[width=6cm]{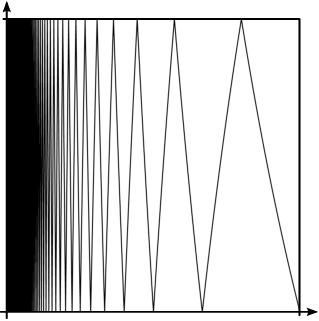}\end{center}
\caption{The graph of $G:(0,1/2]\to[0,1/2]$.}
\label{fig:Ggraph}
\end{figure}

Let $\alpha\in\R$. Let
\[\alpha_0 = d(\alpha,\Z)
,\]
where $d$ denotes the euclidean distance on $\R$. Let
\[ \alpha_{n+1} = d(1/\alpha_n,\Z) = G(\alpha_n)
\]
where $G:(0,1/2]\to[0,1/2]$, $x\mapsto d(1/x,\Z)$, see Figure~\ref{fig:Ggraph}.

The sequence $\alpha_n$ is defined for all $n\in\N$ iff $\alpha\notin\Q$.
Let $a_0\in\Z$ be such that $\alpha\in [a_0-1/2,a_0+1/2)$. Let
\begin{itemize}
\item $s_0$ be undefined if $\alpha-a_n = 0$ or $-1/2$.
\item $s_0 = -1$ if $\alpha-a_0\in(-1/2,0)$,
\item $s_0= 1$ if $\alpha-a_0\in(0,1/2)$.
\end{itemize}
Similarly, let $a_n\in \Z$ be such that $\alpha_{n-1}^{-1} -a_n \in [-1/2,1/2)$, and note that 
$a_n\geq 2$.
Define\footnote{We could have taken $a_n$ to be the floor of $1/\alpha_n$ instead of the 
nearest integer. 
There is a simple way to pass from one convention to the other, since $(a_n,s_n)$ in one 
convention depends solely on $(a_n,s_n)$ in the other. 
The important thing is to have a way to label the intervals on which the maps $H_n$ defined 
below are bijections.}
\begin{itemize}
\item $s_n$ undefined if $\frac{1}{\alpha_{n-1}}-a_n = 0$ or $-1/2$.
\item $s_n = -1$ if $\frac{1}{\alpha_{n-1}}-a_n\in(-1/2,0)$,
\item $s_n= 1$ if $\frac{1}{\alpha_{n-1}}-a_n\in(0,1/2)$.
\end{itemize}
The map $H_n:\alpha\mapsto \alpha_n$ can be decomposed as
\[H_n=\on{saw}\circ\on{inv}\circ \cdots \circ \on{inv}\circ \on{saw},\]
where $\on{inv}$ appears $n$ times and $\on{saw}$ $n+1$ times, $\on{saw}(x)=d(x,\Z)$, whose graph is like a saw, and $\on{inv}(x)=1/x$.
The biggest open intervals on which $H_n$ is a bijection are called \emph{fundamental intervals} (of generation $n$).
They consist in those $\alpha$ with a fixed sequence $(a_0, s_0)$, \ldots, $(a_n, s_n)$, called the \emph{symbol} of the interval.
See Figure~\ref{fig:fundInt}.
The map $H_n$ is a bijection from each fundamental interval to $(0,1/2)$.
For $\alpha$ irrational, we will denote the $n$-th generation fundamental interval containing 
$\alpha$ by
\[I_n(\alpha).\]

\begin{figure}
\begin{center}\includegraphics*[width=12.5cm]{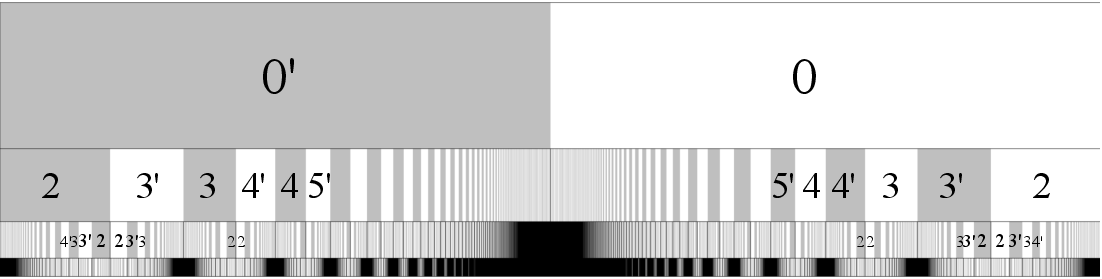}\end{center}
\caption{Symbolic decomposition of $[-1/2,1/2]$ according to the modified continued fraction algorithm. 
An element $x\in[-1/2,1/2]$ is represented by a vertical line of abscissa $x$. 
The rectangle spans the interval $[-1/2,1/2]$. The top row gives $(a_0,s_0)$, depending on $x$. 
The row below it gives $(a_1,s_1)$, the next row $(a_2,s_2)$ and the last row $(a_3,s_3)$. 
The notation $3\text{\textquoteright}$ is a shorthand for $(3,-)$ whereas $3$ means $(3,+)$. 
The decomposition in fundamental intervals is materialized by the alternating white and gray intervals.}
\label{fig:fundInt}
\end{figure}

Let us recall the following classical result:
\begin{lem}\label{lem:bd}
There exists $C>0$ such that for all $n\geq 0$, for all fundamental interval $I$ in the $n$-th generation, 
$H_n$ has distortion $\leq C$ on $I$, that is 
\[\forall x,y\in I, \quad e^{-C}\leq \left| \frac{H_n'(x)}{H_n'(y)}\right | \leq e^C
.\]
\end{lem}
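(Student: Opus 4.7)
The plan is to use the chain rule to compute $H_n'$ explicitly, and reduce the distortion bound to summing a geometric series along the orbit $\alpha_0,\alpha_1,\ldots,\alpha_{n-1}$. Since $H_n = G \circ H_{n-1}$ with $G = \on{saw}\circ\on{inv}$, and $|\on{saw}'|\equiv 1$, $|G'(y)| = y^{-2}$, induction yields
\[
\log|H_n'(\alpha)| = -2\sum_{k=0}^{n-1}\log\alpha_k(\alpha),
\]
so the whole task reduces to bounding $\sum_{k=0}^{n-1} |\log\alpha_k(x) - \log\alpha_k(y)|$ uniformly for $x,y$ in the same fundamental interval $I$ of generation $n$.

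The next step is to identify the inverse branches of $G$: on the generation-$1$ fundamental interval with symbol $(a,s)$, where $a\geq 2$, the inverse is the M\"obius map $\phi_{a,s}: y\mapsto 1/(a+sy)$. On $(0,1/2)$ its derivative is bounded by $(a-1/2)^{-2} \leq 4/9$. Writing $J_k := H_k(I)$, so that $J_n = (0,1/2)$ and $J_k = \phi_{a_{k+1},s_{k+1}}(J_{k+1})$, iterating yields the geometric estimate
\[
|J_k| \;\leq\; \tfrac{1}{2}\prod_{j=k+1}^{n}\frac{1}{(a_j-1/2)^2}.
\]

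I would then combine this with the fact that $\alpha_k(\beta)$ for $\beta\in I$ has generation-$1$ symbol $(a_{k+1},s_{k+1})$, so in particular $\alpha_k(\beta) \geq 1/(a_{k+1}+1/2)$. The mean value theorem applied to $\log$ gives
\[
|\log\alpha_k(x) - \log\alpha_k(y)| \;\leq\; \frac{|J_k|}{\min_{J_k}\alpha_k} \;\leq\; \frac{a_{k+1}+1/2}{2(a_{k+1}-1/2)^2}\prod_{j=k+2}^{n}\frac{1}{(a_j-1/2)^2}.
\]
The prefactor is uniformly bounded since $a_{k+1}\geq 2$, and the remaining product is dominated by $(4/9)^{n-k-1}$, so the series in $k$ converges geometrically to a bound independent of $n$ and of $I$.

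The one point that calls for care is the pole of $G'$ at $0$: the lower bound $\min_{J_k}\alpha_k \geq 1/(a_{k+1}+1/2)$ threatens to bring in an unbounded factor $a_{k+1}$. The key cancellation is that the contraction factor $(a_{k+1}-1/2)^{-2}$ already sitting inside $|J_k|$ beats this, so each summand is in fact $O(1/a_{k+1})$ times a geometric tail, and the summation goes through without further input.
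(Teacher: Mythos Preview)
Your proof is correct. The paper does not actually supply a proof of this lemma: it is introduced with ``Let us recall the following classical result'' and simply stated, with a later footnote remarking that the consequences can alternatively be obtained by ``an inductive computation on $H_n$ as a M\"obius map on fundamental intervals.''

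Your argument is precisely the standard bounded-distortion computation for Gauss-type expanding interval maps: write $\log|H_n'|$ as a Birkhoff sum of $-2\log$ along the orbit $\alpha_0,\dots,\alpha_{n-1}$, and control the variation of each summand on $H_k(I)$ using that the inverse branches $\phi_{a,s}(y)=1/(a+sy)$ contract by at most $4/9$ on $(0,1/2)$. The cancellation you flag---that the factor $(a_{k+1}-1/2)^{-2}$ already present in $|J_k|$ absorbs the potentially large factor $a_{k+1}+1/2$ coming from the pole of $\log'$ at $0$---is exactly the right point, and your summation gives an explicit constant (indeed $C=2$ works). One cosmetic remark: when you write ``on the generation-$1$ fundamental interval with symbol $(a,s)$, the inverse is $\phi_{a,s}$,'' you mean the inverse branch of $G$ labeled by $(a,s)$, mapping $(0,1/2)$ into $(0,1/2)$; the phrasing is slightly off but the mathematics is fine.
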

The union of the fundamental intervals of a given generation is the complement of a 
countable closed set.
Let
\[\beta_{-1} = 1\qquad\text{and}\qquad\beta_n = \alpha_0 \cdots \alpha_n
.\]
The map $H_n$ is differentiable on each fundamental interval, and
\[H'_n(\alpha) = \pm \frac{1}{\beta_{n-1}^2}
.\]
A corollary of bounded distortion\footnote{It can also be proved using an inductive 
computation on $H_n$ as a M\"obius map on fundamental intervals} is the following 
estimate on the length of the $n$-th generation fundamental interval containing $\alpha$:
\[e^{-C} \leq \frac{\big|I_n(\alpha)\big|}{\beta_{n-1}^2/2}\leq e^{C}
.\]

\subsection{The modified Brjuno function}\label{SS:modified-sums}

Originally, Brjuno introduced the function $B'(\alpha) =\sum \frac{\log q_{n+1}}{q_n}$, where 
$p_n/q_n$ is the sequence of convergents associated to the continued fraction expansion of $\alpha$. 
He proved the following. 
Consider a holomorphic germ defined in a neighborhood of the origin in $\mathbb{C}$, 
with expansion $f(z)=e^{2\pi i\alpha} z +\dots$. 
If $\alpha$ satisfies $B'(\alpha)<+\infty$ then $f$ is locally linearizable at $0$ \cite{Brj71}. 
This generalized an earlier result by Siegel \cite{Sie42} under the Diophantine condition.
Brjuno's condition turned out to be sharp in the quadratic family as proved by Yoccoz \cite{Yoc95}.
Yoccoz then defined two functions $B(\alpha)$, for $\alpha$ irrational, as
\[ B(\alpha) = \sum_{n\geq 0} \beta_{n-1}\log \frac{1}{\alpha_n} \in (0,+\infty]
\]
where $\alpha_n$ and $\beta_n$ are the sequences associated to $\alpha$ in either the classical continued fraction algorithm, in which case we will denote this sum $B_1(\alpha)$, or the modified continued fraction algorithm, in which case we denote it $B_{1/2}(\alpha)$.
Yoccoz proved that both $B_1$ and $B_{1/2}$ take finite values exactly at the same irrationals, a.k.a.\  
the Brjuno numbers. 
Indeed, he showed that the difference $B_1-B_{1/2}$ is uniformly bounded from above on irrational numbers.
The map $B_1$ satisfies the following functional equations:
\[B_1(\alpha+1) = B_1(\alpha)\quad\text{and}\quad\forall \alpha\in(0,1),\ B_1(\alpha) = \log(1/\alpha) + \alpha B_1(1/\alpha)
.\]
It is being understood that in both equations, the right hand side is finite if and only if the 
left hand side is.
The map $B_{1/2}$ satisfies 
\begin{equation}\label{E:Yoccoz-properties}
\begin{gathered}
B_{1/2}(\alpha+1) = B_{1/2}(\alpha), \; B_{1/2}(-\alpha) = B_{1/2}(\alpha)\\
\forall \alpha\in(0,1/2),\; B_{1/2}(\alpha) = \log(1/\alpha) + \alpha B_{1/2}(1/\alpha).
\end{gathered}
\end{equation}
In \cite{MMY97} (Theorem~4.6), it was proven that $B_1-B_{1/2}$ is H\"older-continuous with exponent $1/2$.
It follows that the main theorem of the present article is independent of the choice of the continued fraction 
expansion, $B=B_1$ or $B=B_{1/2}$. 
In the sequel, we use
\[B=B_{1/2}.\]

Because modified continued fractions are better suited to the Inou-Shishikura renormalization, 
we will replace the function $B$ by a variant defined using the modified continued fraction instead 
of the classical one, see Section~\ref{sec:mcf}. 
The difference of these two $B$ functions being $1/2$-H\"older continuous, the statement of 
the main theorem is equivalent with either one.

\begin{rem}
Marmi Moussa and Yoccoz in \cite{MMY01} study the function $B$ as a cocycle under the action 
of the modular group $PGL(2, \mathbb{Z})$, and in particular, they introduce and analyze the 
behavior of a complex extension of this function. 
See also \cite{BaMa12,LiSi09,LMNN10,Riv10}, and the references therein, for other relevant 
studies of the Brjuno function.
\end{rem}

\subsection{High type numbers}

For $N\geq 1$, let
\[\HT^{c}_N = \setof{\alpha\in\R\setminus\Q}{\forall n>0, a^{c}_n\geq N}
\]
where $a^{c}_n$ is the sequence in the classical continued fraction expansion of $\alpha$.
For $N\geq 2$ let
\[\HT^{m}_N = \setof{\alpha\in\R\setminus\Q}{\forall n>0, a^{m}_n\geq N}
\]
where $a^{m}_n$ is the sequence in the modified continued fraction.
Note that
\[\HT^{c}_1 = \R\setminus\Q = \HT^{m}_2
.\]
There is a simple algorithm to deduce the two sequences $(a^{c}_n)$ and $((a^{m}_n,s_n))$ from each other, see~\cite{Yoc95}.
For $N\geq 2$,
\[\HT^{c}_{N} \subset \HT^{m}_{N}
.\]
Indeed, if all entries $a^{c}_n\geq 2$ for $n\geq 1$, then the modified expansion has symbols $(a^{m}_n,s_n) = (a^{c}_n,+)$.
Note that $HT^m_{N}$ is not contained in $HT^c_N$, nor in $HT^c_{N-1}$, etc ,  
because the presence of some $s_n=-$ will imply the 
presence of a $1$ in the sequence $a^{c}_n$. 
So the statement made in our main theorem is slightly more general in its form, 
which uses $\HT_N = \HT^{m}_N$, rather than $\HT_N = \HT^{c}_N$.

\section{Renormalization}\label{sec:prelim} 
\subsection{Inou-Shishikura class}\label{SS:Inou-Shishikura}
Consider the ellipse and the map 
\[E:=\Big\{x+\B{i} y\in \D{C}\mid (\frac{x+0.18}{1.24})^2+(\frac{y}{1.04})^2\leq 1 \Big\}, \tand 
g(z):=-\frac{4z}{(1+z)^2}.\]
The polynomial $P(z):=z(1+z)^2$ restricted to the domain 
\begin{equation}\label{V}
V:= g(\hat{\D{C}} \setminus E),
\end{equation}
has a fixed point at $0\in V$ with multiplier $1$, a critical point at $-1/3\in V$ 
that is mapped to $-4/27$. It has another critical point at $-1\in \D{C}\setminus V$ with $P(-1)=0$.

Following \cite{IS06}, we define the class of maps
\begin{displaymath}
\IS_0:=\left\{f:=P\circ \gf_f^{-1}\!\!:V_f \rightarrow \D{C} \;\middle| %
\begin{array}{l} 
\text{$\gf_f \colon V \to V_f=\gf_f(V)$ is univalent,} \\
\text{$\gf_f(0)=0$, $\gf'_f(0)=1$, and } \\
\text{$\gf$ has a quasi-conformal extension to $\D{C}$.} 
\end{array}
\right\},
\end{displaymath}
and for $A\ci \D{R}$,
\[\IS_A:=\{z \mapsto f (\ea z):e^{-2\gp\ga \B{i}}\cdot V_f \to \D{C} 
 \mid f\in \IS_0, \ga\in A\}.\]
Abusing the notation, $\IS_\gb=\IS_{\{\gb\}}$, for $\gb\in \D{R}$.
We have the natural projection
\[\pi:\IS_\D{R}\to \IS_0,  \pi(f)=f_0, \text{  where } f_0(z)=f(e^{-2\pi\ga\B{i}}z)
.\]

The Teichm\"uller distance between any two elements $f=P\circ \gf_f^{-1}$ and $g= P\circ \gf_g^{-1}$ in $\IS_0$ 
is defined as
\[\Td(f,g):=\inf \Big\{\log \Dil (\hat{\gf}_g\circ \hat{\gf}^{-1}_f)\;\Big|%
\begin{array}{l} 
\hat{\gf}_f \tand \hat{\gf}_g \text{ are quasi-conformal extensions}\\
\text{of }\gf_f \tand \gf_g \text{ onto } \CC \text{, respectively}
\end{array}
\Big\}
.\]
This metric is inherited from the one to one correspondence between $\IS_0$ and the Teichm\"uller space of 
$\CC\setminus \ol{V}$.
It is known that this Teichm\"uller space with the above metric is a complete metric space.
The convergence in this metric implies the uniform convergence on compact sets.

Every map in $\IS_\D{R}$ has a neutral fixed point at $0$ and a unique critical point 
at $e^{-2\pi\ga \B{i}}\cdot \gf(-1/3)$ in $e^{-2\pi\ga \B{i}}\cdot V_f$, where $\ga$ is the 
rotation number of $f$ at zero. 
The class $\IS_\R$ naturally embeds into the space of univalent maps on the unit disk with a 
neutral fixed point at $0$. 
Therefore, by Koebe distortion Theorem \ref{T:distortion-estimates}, $\IS_\R$ is a precompact class 
in the compact-open topology\footnote{We will thus work with two topologies on $\IS_0$, the 
one induced by $\Td$, for which it is complete but not pre-compact, and the one from 
compact-open topology, for which it is pre-compact but not complete.}.
Moreover, one can use the area Theorem to show (see \cite[Main Theorem 1-a]{IS06} for details) 
that with the particular choice of $P$ and $V$ 
\begin{equation}\label{E:bound-on-f''}
\{|f''(0)|; f\in \IS_\R\} \ci [2,7].
\end{equation}

For $h\in \IS_\R$ with $h'(0)=e^{2\gp \gb\B{i}}$, define $\ga(h):=\gb$.
Also, $\cp_h$ denotes the unique critical point of $h$. 
According to Inou-Shishikura \cite{IS06}, when $\ga$ is small, any map $h(z)=f(\ea z)\in\IS_\R$ with 
$\ga=\ga(h)\neq 0$ has a distinguished non-zero fixed point $\sigma_h$ near $0$ in $V_h$. 
The $\sigma_h$ fixed point depends continuously on $h$ and has asymptotic expansion 
$\sigma_h=-4\pi \ga \B{i}/f''(0)+o(\ga)$, when $h$ converges to $f\in\IS_0$ in a fixed neighborhood of $0$. 

To deal with maps in $\IS_{\D{R}}$ and the quadratic family at the same time, 
we normalize the quadratic maps to have their critical value at $-4/27$; 
\[Q_\ga(z):=\ea z+\frac{27}{16}e^{4\pi \ga\B{i}}z^2.\]

According to Inou and Shishikura \cite{IS06}, there exists an $\ga^* >0$ such that 
for every $h\colon V_h \to \D{C}$ in $\IS_\R$ or $h=Q_\ga:\D{C}\to\D{C}$, with 
$\ga(h)\in (0,\ga^*]$, there exist a domain $\C{P}_h \subset V_h$ and a univalent map 
$\Phi_h\colon \C{P}_h \to \D{C}$ satisfying 
the following properties:
\begin{figure}[ht]
\begin{center}
\begin{pspicture}(9,5.5)
\rput(4.5,2.75){\includegraphics[width=8cm]{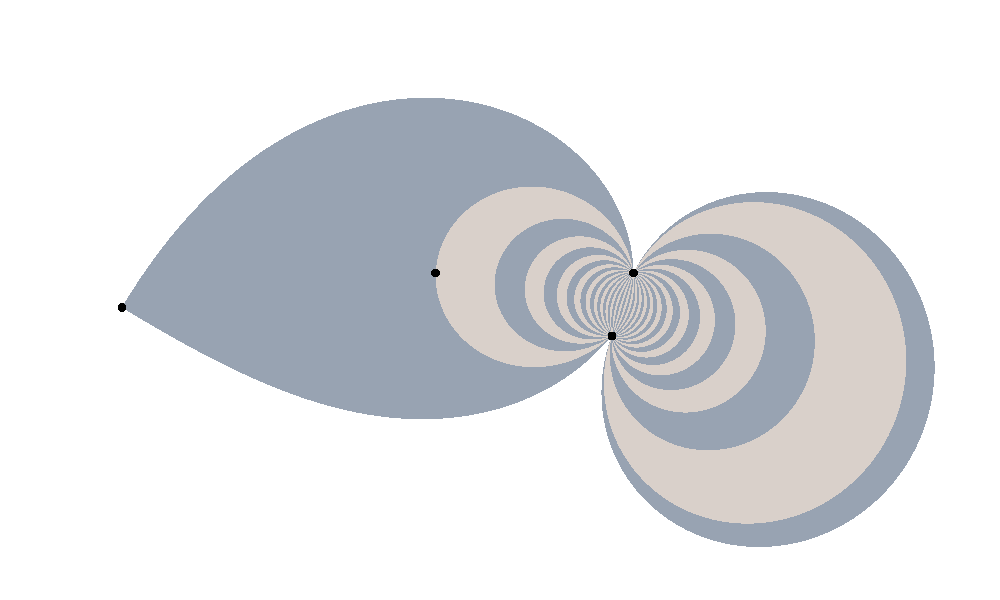}}
\rput(2.0,2.65){$\cp_h$}
\rput(3.65,2.95){$\cv_h$}
\rput(5.65,3.4){$0$}
\psline(5.2,2.05)(5.36,2.4)
\rput(5.1,1.9){$\sigma_h$}
\end{pspicture}
\end{center}
\caption{The perturbed petal $\C{P}_h$ and the various special points associated to some $h\in\IS_\R$. It has been colored so that the map $\Phi_h$ sends each strip to an infinite vertical strip of width $1$.}
\label{fig:ppetal}
\end{figure}

\begin{itemize}  
\item[--] The domain $\C{P}_h$ is bounded by piecewise smooth curves and is compactly contained in $V_h$. 
Moreover, it contains $\cp_h$, $0$, and $\sigma_h$ on its boundary.
\item[--] $\im \Phi_h(z) \to +\infty$ when $z\in \C{P}_h\to 0$, and $\im \Phi_h(z)\to-\infty$ 
when $z \in \C{P}_h \to \sigma_h$.
\item[--] $\Phi_h$ satisfies the \textit{Abel functional equation} on $\C{P}_h$, that is, 
\[\Phi_h(h(z))=\Phi_h(z)+1, \text{ whenever $z$ and $h(z)$ belong to $\C{P}_h$}.\] 

\item[--] $\Phi_h$ is uniquely determined by the above conditions together with normalization $\Phi_h(\cp_h)=0$. 
Moreover, the normalized $\Phi_h$ depends continuously on $h$.     
\end{itemize}

Furthermore, it is proved in \cite[Proposition 1.4]{Ch10-I} and  \cite[Proposition 12]{BC12}, 
independently, that there are integers $\B{k}$ and $\hat{\B{k}}$ independent of $h$ such that 
$\gF_h$ satisfies the following properties\footnote{The class $\IS_0$ is denoted by $\mathcal{F}_1$ 
in \cite{IS06}. 
All above statements, except the existence of uniform $\hat{\B{k}}$ and $\B{k}$, follow 
from Theorem 2.1, and Main Theorems 1, 3 in \cite{IS06}. 
The existence of uniform constants also follows from those results but requires some extra work. 
A detailed treatment of these statements is given in \cite[Proposition 1.4]{Ch10-I} and 
\cite[Proposition 12]{BC12}.}.

\medskip

\begin{itemize}  
\item[--] There exists a continuous branch of argument defined on $\C{P}_h$ such that 
\[\max_{w,w'\in \C{P}_h} |\arg(w)-\arg(w')|\leq 2 \pi \hat{\B{k}}.\]
\item[--] $\Phi_h(\C{P}_h)=\{w \in \D{C} \mid 0 < \re(w) < 1/\ga -\B{k}\}$.
\end{itemize}

The map $\Phi_h: \mathcal{P}_h\to \D{C}$ is called the \textit{perturbed Fatou coordinate}, 
or the \textit{Fatou coordinate} for short,  of $h$, and $\C{P}_h$ is called the \textit{perturbed petal}. 
See Figure~\ref{fig:ppetal}.

\subsection{Renormalization}\label{SS:renormalization}
Let us now describe how renormalization is defined in \cite{IS06}.
Let $h\colon V_h \to \D{C}$ either be in $\IS_{\ga}$ or be the quadratic polynomial $Q_\ga$, with $\ga$ in $(0,\ga^*]$. 
Let $\Phi_h\colon \C{P}_h \to \D{C}$ denote the normalized Fatou coordinate of $h$.
Define 
\begin{equation}\label{E:sector-def}
\begin{gathered}
\C{C}_h:=\{z\in \C{P}_h : 1/2 \leq \re(\Phi_h(z)) \leq 3/2 \: ,\: -2< \im \Phi_h(z) \leq 2 \},\\
\Csh_h:=\{z\in \C{P}_h : 1/2 \leq \re(\Phi_h(z)) \leq 3/2 \: , \: 2\leq \im \Phi_h(z) \}.
\end{gathered}
\end{equation}
By definition, $\cv_h\in \darun(\C{C}_h)$ and $0\in \partial(\Csh_h)$. 

Assume for a moment that there exists a positive integer $k_h$, depending on $h$, with the following properties:
\begin{itemize}
\item For every integer $k$, with $0\leq k \leq k_h$, there exists a unique connected component of $h^{-k}(\Csh_h)$ 
which is compactly contained in $\Dom h=V_h$, \footnote{The operator $\Dom$ denotes ``the domain of definition'' 
(of a map).},  and contains $0$ on its boundary. We denote this component by $(\Csh_h)^{-k}$. 
\item For every integer $k$, with $0\leq k \leq k_h$, there exists a unique connected component of 
$h^{-k}(\C{C}_h)$ which has 
non-empty intersection with $(\Csh_h)^{-k}$, and is compactly contained in $\Dom h$. 
This component is denoted by $\C{C}_h^{-k}$. 
\item The sets $\C{C}_h^{-k_h}$ and $(\Csh_h)^{-k_h}$ are contained in 
\[\{z\in\C{P}_h \mid  1/2< \re \Phi_h(z) <1/\ga -\B{k}-1/2\}.\] 
\item The maps $h: \C{C}_h^{-k}\to \C{C}_h^{-k+1}$, for $2\leq k \leq k_h$, and $h: (\Csh_h)^{-k}\to (\Csh_h)^{-k+1}$, for $1\leq k \leq k_h$, are univalent. 
The map $h: \C{C}_h^{-1}\to \C{C}_h$ is a degree two branched covering.
\end{itemize}
Let $k_h$ be the smallest positive integer satisfying the above four properties, and define
\[S_h:=\C{C}_h^{-k_h}\cup(\Csh_h)^{-k_h}.\]
\begin{figure}[ht]
\begin{center}
 \begin{pspicture}(-.5,1.2)(11.4,9)
\epsfxsize=6.3cm
\rput(3.5,5.9){\epsfbox{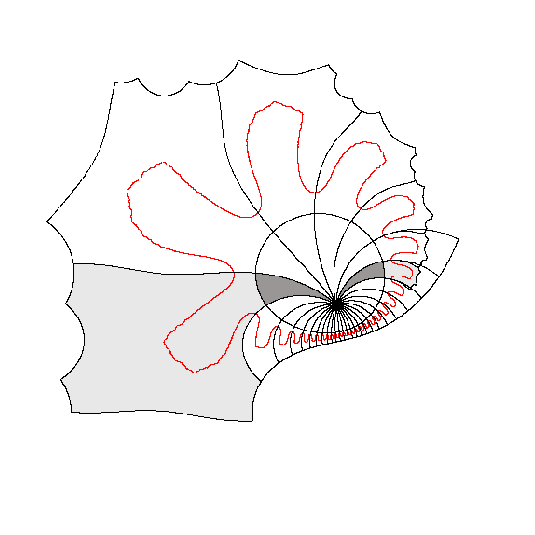}}  
  \psset{xunit=1cm}
  \psset{yunit=1cm}
    \pscurve[linewidth=.6pt,linestyle=dashed,linecolor=black]{->}(5.3,5.1)(5.3,5.5)(5.2,5.8)
    \pscurve[linewidth=.6pt,linestyle=dashed,linecolor=black]{->}(2.2,3.7)(2.3,4.1)(2.2,4.5)
    \pscurve[linewidth=.6pt,linestyle=dashed,linecolor=black]{->}(.7,6.4)(2,6.4)(3.2,6.2)(3.6,5.6)
    \rput(5.3,4.7){$S_h$}
    \rput(2.2,3.4){$\C{C}_h^{-1}$}
    \rput(.1,6.4){$(\Csh_h)^{-1}$}
    \psdots[dotsize=2pt](2.3,5.04)(3.4,4.97)
    \rput(2.,5){\small{$\cp_h$}}
    \rput(3.33,4.77){\small{$\cv_h$}}
\newgray{Lgray}{.99}
\newgray{LLgray}{.88}
\newgray{LLLgray}{.70}
\psdots(7.6,5.5)(8,5.5)(11,5.5)
\pspolygon[fillstyle=solid,fillcolor=LLLgray](7.8,7.3)(7.8,6.1)(8.2,6.1)(8.2,7.3)
\pspolygon[fillstyle=solid,fillcolor=LLgray](7.8,6.1)(7.8,4.9)(8.2,4.9)(8.2,6.1)

\pspolygon[fillstyle=solid,fillcolor=LLLgray](10.2,7.3)(10.05,6.2)(10.45,6.2)(10.6,7.3)
\pspolygon[fillstyle=solid,fillcolor=LLgray](10.05,6.2)(9.8,5.1)(9.9,5)(10,4.97)(10.1,5)(10.2,5.1)(10.45,6.2)
\psdots(7.6,5.5)(8,5.5)
\psline{->}(7.6,5.5)(11.3,5.5)
\psline{->}(7.6,4.7)(7.6,7.3)
\rput(8,5.3){\tiny{$1$}}
\rput(10.9,5.3){\tiny{$\frac{1}{\ga}-\B{k}$}}
\rput(7.3,4.9){\tiny{$-2$}}

\psline{->}(6,5.9)(7.5,5.9)
\rput(6.8,6.1){$\Phi$}

\pscurve[linestyle=dashed]{<-}(7.9,6.7)(8.9,6.9)(10.3,6.7)
\rput(9.1,7.1){\tiny{induced map}}

\psline{->}(7.4,4.7)(6.6,3.2)
\rput(7.8,4.){$\ex$}
\psellipse(6,2.1)(1.2,.9)
\psdots[dotsize=2pt](6,2.1)
\rput(4,2.6){$\C{R}' (h)$}
\rput(6.2,2.1){\small{$0$}}
\rput(1,8){$h$}
\psline[linewidth=.5pt]{->}(5.7,1.55)(5.85,1.45)(6.05,1.4)
\NormalCoor
\psdot[dotsize=1pt](5.65,1.6)
\psdot[dotsize=1pt](6.1,1.4)
 \end{pspicture}
\caption{The figure shows the sets $\C{C}_h$, $\Csh_h$,..., $\C{C}_h^{-k_h}$, and 
$(\Csh_h)^{-k_h}$. 
The ``induced map'' projects via $\ex$ to a map $\C{R}' (h)$ defined near $0$.}
\label{sectorpix}
\end{center}
\end{figure}

By the Abel functional equation, the map 
\begin{equation}\label{renorm-def}
\Phi_h \circ h\co{k_h} \circ \Phi_h^{-1}:\Phi_h(S_h) \to \D{C}
\end{equation}
projects via $w\mapsto z=\frac{-4}{27}e^{2 \pi \B{i} w}$ to a well-defined map $\C{R}' (h)$ defined on a set containing 
a punctured neighborhood of $0$. 
It has a removable singularity at $0$ with asymptotic expansion $e^{2 \pi \frac{-1}{\ga}\B{i}}z+ O(z^2)$ near it. 
See Figure~ \ref{sectorpix}. 

The conjugate map 
\[\C{R}''(h):= s\circ \C{R}' (h)\circ s^{-1}, \text{ where } s(z):=\bar{z},
\]
is defined on the interior of $s(\frac{-4}{27}e^{2\pi \B{i}(\Phi_h(S_h))})$ and can be extended at $0$.  
It has the form $z \mapsto e^{2 \pi \frac{1}{\ga}\B{i}}z+O(z^2)$ near $0$, and 
is normalized to have the critical value at $-4/27$. 
For future reference, we define the notation
\begin{equation}\label{E:ex} 
\ex(\gz):=s(\frac{-4}{27}e^{2\pi \B{i}\gz}).
\end{equation}

The following theorem \cite[Main theorem 3]{IS06} states that this definition of renormalization
can be carried out for certain perturbations of maps in $\IS_0$. 
In particular, this implies the existence of $k_h$ satisfying the four properties listed in the definition of 
the renormalization. 
See \cite[Proposition 13]{BC12} for a detailed argument on this\footnote{The sets $\C{C}_h^{-k}$ and 
$(\Csh_h)^{-k}$ defined here are (strictly) contained in the sets denoted by $V^{-k}$ 
and $W^{-k}$ in \cite{BC12}. 
The set $\Phi_h(\C{C}_h^{-k}\cup (\Csh_h)^{-k})$ is contained in the union 
\[D^\sharp_{-k} \cup D_{-k} \cup D''_{-k} \cup D'_{-k+1} \cup D_{-k+1} \cup D^\sharp_{-k+1}\] 
in the notations used in \cite[Section 5.A]{IS06}.}.

\begin{thm}[Inou-Shishikura]\label{Ino-Shi2} 
There exist a constant $\ga^*>0$ and a Jordan domain $U\supset \ol{V}$ such that if 
$h \in \IS_\ga\cup\{Q_\ga\}$ with $\ga \in (0,\ga^*]$, then $\C{R}''(h)$ is well-defined and has an appropriate 
restriction $\C{R}(h)$ to a smaller domain which belongs to the class $\IS_{1/\ga}$. 
Moreover, 
\begin{itemize}
\item[a)] with $\C{R}(h)=P\circ \gy^{-1}$, $\gy$ has a univalent extension onto $e^{-2\pi \B{i}/\alpha} U$,
\item[b)] there exists $\lambda<1$ such that for all $f,g\in \IS_\ga$, with $\ga\in (0,\ga^*]$, we have 
\[\Td(\pi\circ \C{R}(f),\pi\circ\C{R}(g))\leq \gl \Td(\pi(f),\pi(g)).\]
\end{itemize}
\end{thm}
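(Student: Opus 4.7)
\medskip

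The plan is to follow the Inou--Shishikura construction from \cite{IS06}, in the following steps.

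First, I would build $k_h$ and the pull-back sectors $\mathcal{C}_h^{-k}, (\Csh_h)^{-k}$ by induction on $k$. The perturbed Fatou coordinate $\Phi_h$ already sends the petal $\mathcal{P}_h$ biholomorphically to the strip $\{0<\re w<1/\alpha-\mathbf{k}\}$, conjugating $h$ to translation by $1$. So long as the pull-back stays inside $\mathcal{P}_h$ with bounded real part in Fatou coordinates, the inverse branch of $h$ used is univalent (no critical point) and simply corresponds to shifting by $-1$ in $\Phi_h$. Since the real coordinate is bounded above by $1/\alpha-\mathbf{k}$, this process must terminate in finitely many steps: I would let $k_h$ be the first index for which $\mathcal{C}_h^{-k}$ meets the region where the critical point $\cp_h$ forces the pull-back to leave $\mathcal{P}_h$, equivalently the first $k$ for which the branched-cover condition $h:\mathcal{C}_h^{-1}\to\mathcal{C}_h$ is degree two. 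The asymptotic $k_h\sim 1/\alpha$ follows from the width of the strip image. This establishes the four bulleted properties in the definition of $k_h$.

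Second, I would define $\mathcal{R}''(h)$ by the Abel equation \eqref{renorm-def} followed by $\ex$, and show $\mathcal{R}''(h)\in\IS_{1/\alpha}$. The idea is that the induced map on the quotient cylinder is a degree-$2$ branched cover whose critical value, by the normalization $\Phi_h(\cp_h)=0$, lands at $-4/27$. Using that $h$ itself is $P\circ\varphi_h^{-1}$, the map $\Phi_h\circ h^{k_h}\circ\Phi_h^{-1}$ can be factored near the critical sector as $P\circ\eta^{-1}$ where $\eta$ is the composition of $\Phi_h$ with $\varphi_h$ and the iterated inverse branches. The key point, which is the main technical obstacle, is to verify that $\eta$ is univalent not merely on $V$ but on a \emph{strictly larger} Jordan domain $U\supset\overline{V}$, independently of $h$. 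This requires chasing domains: each preimage step enlarges the domain of univalence of the inverse branches (because the previous sector is compactly contained in $\mathcal{P}_h$), so composing with the ambient univalent $\varphi_h$ (defined on all of $V$, in fact on $\overline{V}$ with a QC extension) yields univalence on an enlarged domain. Packaging everything and passing through $\ex$ gives assertion (a).

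Third, for the contraction estimate (b), I would argue as follows. Given $f,g\in\IS_\alpha$ with QC extensions $\hat{\varphi}_f,\hat{\varphi}_g$ of $\varphi_f,\varphi_g$, the construction above produces univalent $\psi_f,\psi_g$ defined on $e^{-2\pi\B{i}/\alpha}U$ with $\pi\circ\mathcal{R}(f)=P\circ\psi_f^{-1}$ and similarly for $g$. The map $\psi_g\circ\psi_f^{-1}$ is holomorphic on $e^{-2\pi\B{i}/\alpha}U$ and agrees with the composition built from $\hat{\varphi}_g\circ\hat{\varphi}_f^{-1}$ under the renormalization. One then extends $\psi_g\circ\psi_f^{-1}$ quasiconformally to $\CC$; the extension can be chosen to be conformal on $e^{-2\pi\B{i}/\alpha}U$ and to have dilatation bounded by $\Dil(\hat{\varphi}_g\circ\hat{\varphi}_f^{-1})$ elsewhere. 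Since $U\supsetneq\overline{V}$, there is a fixed annular collar $U\setminus\overline{V}$ of positive modulus $m_0>0$ on which the new map is conformal; by Gr\"otzsch/Teichm\"uller contraction (or equivalently, the Schwarz lemma on the Teichm\"uller space of $\CC\setminus\overline{V}$ arising from the inclusion of moduli spaces $\CC\setminus\overline{U}\hookrightarrow\CC\setminus\overline{V}$), the resulting dilatation between the normalized maps on $V$ is at most $\lambda\cdot\Dil(\hat{\varphi}_g\circ\hat{\varphi}_f^{-1})$ for a universal $\lambda=\lambda(m_0)<1$. Taking infimum over QC extensions gives $\Td(\pi\mathcal{R}(f),\pi\mathcal{R}(g))\le\lambda\,\Td(\pi f,\pi g)$.

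The principal obstacle is the second step: producing the uniform Jordan domain $U\supset\overline{V}$ on which the renormalization map $\psi$ admits univalent extension, uniformly in $h\in\IS_\alpha\cup\{Q_\alpha\}$ and in $\alpha\in(0,\alpha^*]$. This is what forces the delicate choice of the ellipse $E$ and the particular polynomial $P$ in the definition of $\IS_0$ and is the core content of \cite[Main Theorem 3]{IS06}; everything else (combinatorics of pull-backs, Teichm\"uller contraction) is soft once this univalent extension is in hand.
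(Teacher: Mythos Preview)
The paper does not prove this theorem; it is quoted from \cite{IS06} (Main Theorem~3 there), with the additional uniform constants extracted in \cite[Proposition~12--13]{BC12}. So there is no ``paper's own proof'' to compare against --- your sketch is really a sketch of the Inou--Shishikura argument itself.

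That said, your first step contains a genuine misconception about the geometry. You write that the pull-backs $\mathcal{C}_h^{-k}$ correspond to shifting by $-1$ in $\Phi_h$, that the process terminates because the real coordinate is bounded above by $1/\alpha-\mathbf{k}$, and that consequently $k_h\sim 1/\alpha$. All three claims are wrong. Since $\mathcal{C}_h$ sits at $\re\Phi_h\in[1/2,3/2]$, its immediate preimage already leaves the petal on the \emph{left}; the sets $\mathcal{C}_h^{-k}$ for $1\le k<k_h$ are \emph{not} in $\mathcal{P}_h$ but spiral around $0$ in the dynamical plane (this is what the bound $|\arg(w)-\arg(w')|\le 2\pi\hat{\mathbf{k}}$ controls). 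The integer $k_h$ is the first time this spiral re-enters the petal, now near its \emph{right} edge, and it is \emph{uniformly bounded} independent of $\alpha$ --- see Lemma~\ref{L:turning} of the present paper, whose proof compares the spiral to rotation by $\alpha$ and gets $k_h\in\{\mathbf{k}+1,\mathbf{k}+2,\ldots\}$ with a uniform upper bound. In lift coordinates the return map is $F_f^{\,k_h}-1/\alpha$ (Lemma~\ref{L:buffer}), so the rotation number $1/\alpha$ of $\mathcal{R}(h)$ comes from the subtraction of the deck translation, not from $k_h$. Your characterization ``$k_h$ is the first $k$ for which $h:\mathcal{C}_h^{-1}\to\mathcal{C}_h$ is degree two'' is also off: that branched cover occurs already at $k=1$, always.

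Your second and third steps are closer to the mark. The identification of the univalent extension of $\psi$ to a fixed $U\supset\overline V$ as the technical heart is correct, and the contraction (b) is indeed a Schwarz--Pick/Royden argument on Teichm\"uller space exploiting the fixed modulus of $U\setminus\overline V$; this is how \cite{IS06} proceeds. But for a complete argument you would need to redo the first step with the correct picture of the pull-backs leaving and re-entering the petal.
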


The restriction $\C{R}(h)$ of $\C{R}''(h)$ to a smaller domain is called the \textit{near-parabolic renormalization} 
of $h$ by Inou and Shishikura. 
We simply refer to it as the \textit{renormalization} of $h$.
Also, when it does not lead to any confusion, we refer to both maps as $\C{R}(h)$.




\begin{rem}
A detailed numerical study of the fixed point of the parabolic renormalization has been carried 
out in \cite{LaYa11}. 
Also, a slightly modified version of Theorem~\ref{Ino-Shi2} is now announced  \cite{C14} 
for the wider class of unisingular maps.   
That is, the maps with a single critical point of (integer) order $d\geq 2$.
\end{rem}
\section{Lipschitz}\label{S:lipschitz}
\subsection{Statement of the main inequalities}

For a map $f\in \IS_\R$ or $f=Q_\ga$ that is linearizable at $0$, let us define 
\[\gU(f):=\log (r(\gD(f)))+B(\ga(f)),\]
where $\gD(f)$ denotes the Siegel disk of $f$, and $r(\gD(f))$ the conformal radius of 
$\gD(f)$ centered at $0$. 
Also, $B(\ga(f))=B_{1/2}(\ga(f))$ is the modified Brjuno function defined in 
Section~\ref{SS:modified-sums}.
If in addition $f$ is renormalizable in the sense of Section~\ref{sec:prelim}, we define
\[C(f)=\gU(f)-\ga(f)\gU(\C{R}(f)).\]
The main purpose of this section is to study the dependence of the above map on the linearity and non-linearity of 
$f$, that is, on $\ga(f)$ and $\pi(f)$. 
To this end, we define the Riemannian metric $ds=-\log |x| |dx|$ on the interval $[-1/2,1/2]$, where $|dx|$ is the 
standard Euclidean metric on $\D{R}$. 
Let $d_{\log}(x,y)$ denote the induced distance on the interval from this metric. 
We have $d_{\log}(-1/2,1/2)=1+\log 2<\infty$. 

Let
\[\QIS:=\{f\in \IS_\ga\u\{Q_\ga\} ; \ga\in(0,\ga_*]\},\] 
where $\ga_*\leq \ga^*$ is a positive constant that will be determined in Lemma~\ref{L:basic-estimates-lift}.
This section is devoted to the proofs of the following propositions on this class. 
The proof of Proposition~\ref{P:Lipschitz} is split in Sections~\ref{SS:reductions}, \ref{SS:the-lift} and 
\ref{SS:Proofs-estimates}. 
Proposition~\ref{P:contracting} is proved in Section~\ref{SS:Schwarzian-derivatives}, 
Propositions~\ref{P:Uniformly-bounded} and \ref{prop:samelim} are proved in 
Section~\ref{SS:size-of-Siegels}.
These propositions are used in Section~\ref{sec:arn}. 
\begin{propo}\label{P:Lipschitz}
There exists a constant $K_1$ such that
\begin{itemize}
\item for all $f,g\in \IS_{(0,\ga_*]}$ we have
\begin{equation*}
|C(f)-C(g)| \leq K_1\big[\Td(\pi(f),\pi(g))+d_{\log}(\ga(f),\ga(g))\big];
\end{equation*}
\item for all $\ga,\gb\in (0,\ga_*]$ we have 
\begin{equation*}
|C(Q_\ga)-C(Q_\gb)| \leq K_1 d_{\log}(\ga,\gb).
\end{equation*}
\end{itemize}
\end{propo}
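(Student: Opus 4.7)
The plan is to first reduce $C(f)$, via the functional equation for $Y_{1/2}$, to a purely geometric expression involving the conformal radii of the Siegel disks of $f$ and $\C{R}(f)$, then to derive an identity for that expression by lifting the linearization of $\C{R}(f)$ into $\gD(f)$ through the Fatou coordinate, and finally to estimate the resulting expression separately in the nonlinearity and rotation directions. For the reduction, observe that $\alpha(\C{R}(f))\equiv 1/\alpha(f)\pmod{\Z}$, so the periodicity and evenness of $Y_{1/2}$ give $Y(\alpha(\C{R}(f)))=Y(1/\alpha(f))$; substituting the functional equation $Y(\alpha)=\log(1/\alpha)+\alpha Y(1/\alpha)$ from~\eqref{E:Yoccoz-properties} yields
\[C(f)=\log r(\gD(f))-\alpha(f)\log r(\gD(\C{R}(f)))+\log(1/\alpha(f)),\]
with the same formula for $Q_\alpha$. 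The arithmetic has disappeared and it remains to control this difference of conformal radii.

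Next I would lift the Siegel linearization of $\C{R}(f)$ back into $\gD(f)$. If $\psi$ is the normalized linearization of $\C{R}(f)$ on $B(0,r(\gD(\C{R}(f))))$, then composing $\psi$ with $\ex^{-1}$ and then with $\gF_f^{-1}$ on an appropriate branch produces a univalent map into $\gD(f)$ semi-conjugating an iterate of $f$ to the rotation by $\alpha(f)$. Tracking conformal radii through this lift, the term $\log(1/\alpha(f))$ comes out of the stretching by $1/\alpha$ that turns one full rotation of $\C{R}(f)$ into a strip of width $1$ in the Fatou chart. The remainder is an explicit bounded analytic quantity $\Lambda(f)$ built from $\gF_f^{-1}$, $\ex$, and the position of the critical orbit inside $S_f$, and one obtains an identity $C(f)=\Lambda(f)$ modulo a universal additive constant. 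This reduces Proposition~\ref{P:Lipschitz} to a Lipschitz bound for $\Lambda$.

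For the Teichm\"uller direction, one shows that $\gF_f$, restricted to the compact region $\{1/2\le\re(\gF_f(z))\le 3/2\}$, depends Lipschitz-continuously on $f\in\IS_0$. This is obtained by quasi-conformal interpolation: a Beltrami coefficient $\mu$ with $\|\mu\|_\infty\lesssim\Td(\pi(f),\pi(g))$ produces, via the measurable Riemann mapping theorem, a deformation of $\gf_f$ into $\gf_g$, and the variational formulas of Ahlfors and Lehto~\cite{Ahl63,Leh76} give the first-order estimate. The rotational direction is the hard part. A perturbation $\alpha\mapsto\alpha+\delta$ stretches the petal over a range $1/(\alpha+\delta)-\B{k}$ and hence moves the far end by $O(|\delta|/\alpha^2)$ in the Fatou chart; one must separate the universal $1/\alpha$-stretching---already absorbed into the $\log(1/\alpha)$ term from the reduction step---from the genuinely non-linear $\alpha$-dependence of $\gF_f$ and of the sectors $\C{C}_f^{-k_h}$ and $(\Csh_f)^{-k_h}$ inside $V_f$. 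That this decoupling can be performed uniformly as $\alpha\downarrow 0$ relies on the extension of $\psi$ onto the $\alpha$-independent domain $U$ provided by Theorem~\ref{Ino-Shi2}(a). The residual non-linear contribution will be of order $|\delta|\cdot|\log\alpha|$, which is exactly the $d_{\log}$-Lipschitz bound in the statement.
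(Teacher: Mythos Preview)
Your reduction via the functional equation for $Y_{1/2}$ to $C(f)=\log r(f)-\ga(f)\log r(\C{R}(f))-\log\ga(f)$ is correct and matches the paper's Equation~\eqref{E:C(f)}, and the idea of lifting the linearization of $\C{R}(f)$ through $\gF_f^{-1}$ to relate the two conformal radii is exactly the paper's map $\gG_f$ in Equation~\eqref{E:gamma-expansion}.

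The gap is in the execution. The conformal radius, hence $C(f)$, is encoded in the \emph{asymptotic} behaviour of $\gF_f^{-1}$ as $\im\to+\infty$, not in its values on a compact strip such as $\{1/2\le\re\gF_f\le 3/2\}$; Lipschitz control of $\gF_f$ on that strip says nothing about the derivative of the linearizing map at the fixed point. The paper does not try to control $\gF_f$ directly. It passes to the covering $\gt_f(w)=\gs_f/(1-e^{-2\pi\ga\B{i}w})$, writes $\gF_f^{-1}=\gt_f\circ L_f$, and isolates the asymptotic translation constant $\ell_f=\im\lim_{\im\gx\to\infty}(L_f(\gx)-\gx)$. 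This produces the explicit identity (Equation~\eqref{E:C(f)-expanded})
\[C(f)=-\log|f''(0)|+\log(4\pi)-2\pi\ga(f)\ell_f+\ga(f)\log\tfrac{27}{4},\]
so Proposition~\ref{P:Lipschitz} becomes the Lipschitz estimate for $\ga(f)\ell_f$ stated in Lemma~\ref{L:equi-Lipschitz} (together with the easy Lemma~\ref{L:metrics-related} for the $f''(0)$ term). The $|\log\ga|$ factor you correctly anticipate in the $\ga$-direction does not come from the extension of $\gy$ onto $U$---that extension is the key input for Proposition~\ref{P:contracting}, not for the present one---but from integrating $|\gt_f(w)|$ over an infinite vertical strip, which diverges like $|\log\ga|$. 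The paper extracts this via quasi-conformal model maps $H_f$ for $L_f$ and a Green's formula on the cylinder with kernel $(\tan(\pi\gx)+\B{i})/(2\tan(\pi\gx))$; see Sublemma~\ref{SL:log-bound} and the proofs of Lemmas~\ref{L:vertical-direction} and~\ref{L:horizontal-direction}. That machinery is the heart of the argument, and your sketch does not supply a substitute for it.
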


\begin{propo}\label{P:contracting}
There exist constants $K_2$ and $\gl<1$ such that 
\begin{itemize}
\item for all $f,g\in \IS_{(0,\ga_*]}$ we have
\begin{equation*}
\Td(\pi(\C{R}(f)),\pi(\C{R}(g))) \leq \gl \Td(\pi(f),\pi(g))+K_2 |\ga(f)-\ga(g)|;
\end{equation*}
\item for all $\ga,\gb\in (0,\ga_*]$ we have
\begin{equation*}
\Td(\pi(\C{R}(Q_\ga)),\pi(\C{R}(Q_\gb))) \leq K_2 |\ga-\gb|.
\end{equation*}
\end{itemize}
\end{propo}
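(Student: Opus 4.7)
The overall plan is a triangle inequality through an interpolating map that separates the nonlinearity and rotation directions. Given $f,g\in\IS_{(0,\ga_*]}$ with $\ga:=\ga(f)$ and $\gb:=\ga(g)$, introduce $h\in\IS_\gb$ with $\pi(h)=\pi(f)$, concretely $h(z):=\pi(f)(e^{2\pi\gb\B{i}}z)$, so that
\[
\Td(\pi(\C{R}(f)),\pi(\C{R}(g)))\leq \Td(\pi(\C{R}(f)),\pi(\C{R}(h)))+\Td(\pi(\C{R}(h)),\pi(\C{R}(g))).
\]
The second term is handled by Theorem~\ref{Ino-Shi2}(b) applied to the pair $(h,g)\in\IS_\gb\times\IS_\gb$, giving $\gl\,\Td(\pi(h),\pi(g))=\gl\,\Td(\pi(f),\pi(g))$. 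Both bullets of the proposition then reduce to the single Lipschitz-in-rotation estimate
\[
\Td\bigl(\pi(\C{R}(f_\ga)),\pi(\C{R}(f_\gb))\bigr)\leq K_2|\ga-\gb|,
\]
for $f_\ga(z):=f_0(\ea z)$ and $f_\gb(z):=f_0(e^{2\pi\gb\B{i}}z)$ with $f_0\in\IS_0\cup\{P\}$ fixed; the second bullet is exactly the case $f_0=P$, where the first triangle term does not even appear.

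To establish this estimate I would build an explicit quasi-conformal conjugacy $\psi$ between $\C{R}(f_\ga)$ and $\C{R}(f_\gb)$ (in suitable neighbourhoods of $0$) with dilatation $\log\Dil\psi\leq K_2|\ga-\gb|$, since the Teichm\"uller distance on $\IS_0$ is by definition bounded by $\log\Dil$ of any such conjugating extension. The natural candidate is furnished by the Fatou-coordinate description~\eqref{renorm-def}: inside the petals, transport by $\Phi_{f_\gb}\circ\Phi_{f_\ga}^{-1}$ on the image of the fundamental strip $\C{C}_{f_\ga}\cup\Csh_{f_\ga}$ and project via $\ex$ to the renormalization coordinate; outside the resulting projected neighbourhood of $0$, fill in by quasi-conformal interpolation across an annulus of definite modulus, using Ahlfors--Beurling/Lehto-type extension as in \cite{Ahl63,Leh76}. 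Since $\Phi_{f_\gb}\circ\Phi_{f_\ga}^{-1}$ is holomorphic wherever both Fatou coordinates are defined, $\Dil\psi$ is controlled by the parametric derivative $\partial_\ga \Phi_{f_\ga}$ restricted to the fundamental strip, so the task reduces to a uniform $O(1)$ bound on this quantity. The input that makes this possible, and which is the content of Section~\ref{SS:Schwarzian-derivatives}, is a uniform Nehari-type bound on the Schwarzian derivative of $\Phi_{f_\ga}^{-1}$ over the fundamental strip; combined with the precompactness of $\IS_\R$ and the holomorphic dependence of $f_\ga$ on $\ga$, this yields $\|\partial_\ga\Phi_{f_\ga}\|=O(1)$ uniformly in $\ga\in(0,\ga_*]$ and $f_0\in\IS_0$.

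I expect the main obstacle to be that the petal $\C{P}_{f_\ga}$ has width of order $1/\ga$, so naive parametric estimates on $\Phi_{f_\ga}$ blow up as $\ga\to 0^+$. One must therefore carefully restrict the analysis to the bounded sub-region on which the renormalization is computed, and transport the estimate across the $k_{f_\ga}\sim 1/\ga$ pre-images $\C{C}_{f_\ga}^{-k_{f_\ga}}\cup(\Csh_{f_\ga})^{-k_{f_\ga}}$ without accumulating errors of order $k_{f_\ga}$. This transport relies essentially on the univalence of the iterates $f_\ga^k$ along the pull-back chain (Theorem~\ref{Ino-Shi2} together with the uniform constants $\B{k}$ and $\hat{\B{k}}$ of Inou--Shishikura), so that the relevant sets have bounded hyperbolic diameter independently of $\ga$. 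Once this uniformity is secured, the Lipschitz bound on $\Dil\psi$, and hence both bullets of the proposition, follow.
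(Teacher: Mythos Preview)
Your initial reduction is correct and matches the paper: introduce $h$ with $\pi(h)=\pi(f)$ and $\ga(h)=\ga(g)$, apply Theorem~\ref{Ino-Shi2}(b) to the pair $(h,g)$ for the $\gl$-contraction, and reduce everything to the one-parameter Lipschitz estimate for $\ga\mapsto\pi(\C{R}(f_\ga))$ with $f_0$ fixed. After that, however, the proposal drifts away from what actually works.

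First, a factual error: you write $k_{f_\ga}\sim 1/\ga$. In fact $k_f$ is \emph{uniformly bounded} (this is Lemma~\ref{L:turning}). The blow-up you anticipate does not come from iterating $1/\ga$ times; it comes from the fact that the domain $\gF_\ga(S_\ga)$ sits near $\re w\approx 1/\ga-\B{k}$ while the target sits near $\re w\approx 1$, so the renormalization lift is $\wt E_\ga = F_\ga^{\,\circ k_\ga}-1/\ga$ with $k_\ga$ bounded. The difficulty is thus entirely in the explicit $-1/\ga$, and the paper absorbs it by precomposing with $T_{1/\ga}$ and controlling $\partial_\ga\bigl(T_{-1/\ga}\circ L_\ga\circ T_{1/\ga}\bigr)$ separately near $0$ and near $1/\ga$ (Lemma~\ref{L:partials-of-L}, both parts). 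Your proposed remedy --- transporting estimates across $\sim 1/\ga$ pre-images using univalence --- addresses a problem that is not there and would not help with the one that is.

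Second, the object you propose to extend, $\gF_{f_\gb}\circ\gF_{f_\ga}^{-1}$, is a map between \emph{petals of $f_\ga$ and $f_\gb$}, not between the domains $V_\ga=\gy_\ga(V)$ and $V_\gb=\gy_\gb(V)$ of the renormalized maps. The Teichm\"uller distance on $\IS_0$ is by definition the infimum of $\log\Dil$ over quasi-conformal extensions of $\gO_\gb:=\gy_\gb\circ\gy_\ga^{-1}$; this is the map one must analyze. Relatedly, the role of the Schwarzian in Section~\ref{SS:Schwarzian-derivatives} is opposite to what you describe: it is not an \emph{input} bound on $\SD\gF_{f_\ga}^{-1}$ used to control $\partial_\ga\gF_{f_\ga}$, but an \emph{output} --- from a uniform bound on $\partial_\ga\gy_\ga$ on a collar $W\supset V$ (Proposition~\ref{P:psi-derivative}, proved via Lemmas~\ref{L:buffer}--\ref{L:partial-of-parabolic-renormalization-lift} and the maximum principle), one deduces $\|\SD\gO_\gb\|_{V_\ga}=O(|\ga-\gb|)$ (Lemma~\ref{L:linear-schwarzian}), and only then invokes the Ahlfors--Lehto theorem to produce a quasi-conformal extension of $\gO_\gb$ with dilatation $O(|\ga-\gb|)$.
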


\begin{propo}\label{P:Uniformly-bounded}
We have 
\[\sup \{|\gU(f)| ;  f\in \QIS\}< \infty.\]
\end{propo}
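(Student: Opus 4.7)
The plan is to prove $|\gU(f)|\leq K$ uniformly over the linearizable members of $\QIS$ (implicit in the statement, since $\gU$ requires $\ga(f)$ to be Brjuno), combining a Yoccoz-type lower bound with an iterative upper bound via the renormalization operator $\C{R}$.

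For the lower bound, every $f\in\QIS$ is univalent on a domain of definite conformal modulus around $0$, uniformly over $\QIS$: for $f\in\IS_\ga$ this follows from the Koebe estimates built into the definition of $\IS_0$, and for $Q_\ga$ it is manifest. Yoccoz's quantitative linearization theorem then yields $\log r(f)\geq -Y(\ga(f))-C_0$ for a universal $C_0$, so $\gU(f)\geq -C_0$.

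For the upper bound, the definition of $C$ together with the functional equation \eqref{E:Yoccoz-properties} for $Y$ and the relation $\ga(\C{R}(f))=\ga_1$ gives
\[\gU(f)=C(f)+\ga(f)\,\gU(\C{R}(f)),\]
which telescopes, as long as $\C{R}^k f\in\QIS$ for $k<n$, to
\[\gU(f)=\sum_{k=0}^{n-1}\gb_{k-1}\,C(\C{R}^k f)+\gb_{n-1}\,\gU(\C{R}^n f).\]
Since each $\ga_k\leq 1/2$ gives $\gb_{k-1}\leq 2^{-k}$, once one has a uniform bound $|C(g)|\leq K_C$ for $g\in\QIS$, the series converges absolutely and yields $|\gU(f)|\leq 2K_C$ whenever the $\C{R}$-orbit of $f$ stays in $\QIS$ forever. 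In the residual case where some $\ga_n>\ga_*$, the tail $\gb_{n-1}\gU(\C{R}^n f)$ is bounded by invoking the classical Yoccoz--Buff--Ch\'eritat upper bound in the quadratic family together with precompactness of $\IS_\R$ in the compact-open topology.

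The main obstacle is the uniform bound $|C(g)|\leq K_C$. Unfolded it asks for
\[\bigl|\log r(\gD(g))+\log\tfrac{1}{\ga(g)}-\ga(g)\log r(\gD(\C{R} g))\bigr|\leq K_C,\]
a geometric comparison between the linearizers of $g$ and $\C{R}(g)$. The natural bridge is the Fatou coordinate $\Phi_g$: the linearizer of $\C{R}(g)$, pulled back through $\ex$ and $\Phi_g^{-1}$, produces a holomorphic piece of $g$'s linearization near $0$ whose scale and distortion are controlled by the universal constants $\B{k},\hat{\B{k}}$ from Section~\ref{SS:Inou-Shishikura}. Establishing this comparison quantitatively is the technical heart; once it is in place, the telescoping argument above closes out the proof.
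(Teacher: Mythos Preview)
The paper's proof is quite different from your outline. For the upper bound it does not bound $|C|$ and telescope; instead it constructs, via the tower of Fatou coordinates $\gF_{\C{R}^i f}$, an explicit sequence $z_n$ in the forward critical orbit of $f$ satisfying $\log|z_n|\leq A+\sum_{i\leq n}\gb_{i-1}\log\ga_i$. An accumulation point of $(z_n)$ lies outside $\gD(f)$, and Koebe's $1/4$-theorem then gives $\log r(f)+Y(\ga)\leq A+\log 4$. The lower bound is Yoccoz's, as you note.

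Your telescoping route has two gaps beyond the acknowledged one on $|C|$.

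First, passing from $\gU(f)=\sum_{k<n}\gb_{k-1}C(f_k)+\gb_{n-1}\gU(f_n)$ and $|C|\leq K_C$ to $|\gU(f)|\leq 2K_C$ requires $\gb_{n-1}\gU(f_n)\to 0$, which as stated is the very thing you are proving. It can be rescued non-circularly: $\gU(f_n)\geq -C_0$ by Yoccoz, while the trivial bound $\log r(f_n)\leq \log D$ (uniform domain radius) gives $\gb_{n-1}\gU(f_n)\leq \gb_{n-1}\log D+\gb_{n-1}Y(\ga_n)$, and $\gb_{n-1}Y(\ga_n)$ is exactly the $n$-tail of the convergent series $Y(\ga)$. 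This needs to be spelled out; indeed, when the paper later derives the same series expansion for $\gU$ it justifies the vanishing tail by \emph{invoking} the present proposition, so you genuinely need an independent argument here.

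Second, your ``residual case'' handling is incorrect. The Buff--Ch\'eritat upper bound is known only for quadratic polynomials, and it does not transfer to $\IS_\R$ by precompactness: $\gU$ is nowhere near continuous in the compact-open topology (it involves $Y(\ga)$, which is wildly discontinuous). The paper's own proof also tacitly uses infinite renormalizability, and the proposition is only ever applied to high-type $\ga$, so this is harmless there; but your attempt to cover general $\ga\in(0,\ga_*]$ does not work.

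On the positive side, the bound $|C(f)|\leq K_C$ that you flag as the crux is in fact obtainable independently of this proposition: the Fatou-coordinate analysis in the paper yields the explicit formula $C(f)=-\log|f''(0)|+\log(4\pi)-2\pi\ga\ell_f+\ga\log\tfrac{27}{4}$ together with $|\ell_f|\leq K(1-\log\ga)$, so $|\ga\ell_f|$ and hence $|C(f)|$ are uniformly bounded. With this and the tail argument above, your route does close for high-type $\ga$, giving a legitimate alternative to the paper's direct construction.
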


\begin{propo}\label{prop:samelim}
For all $f_0 \in \IS_0 \u \{Q_0\}$, 
\[\lim_{\ga\to 0^+}\gU(z\mapsto f_0(\ea z))=\lim_{\ga\to 0^-}\gU(z\mapsto f_0(\ea z))=\log(4\pi/|f''(0)|)  
.\]
\end{propo}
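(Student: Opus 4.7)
The plan is to combine the renormalization cocycle formula with the Lipschitz control of Proposition~\ref{P:Lipschitz} to establish existence of the limit, and to identify the limit on a convenient subsequence via the near-parabolic Fatou coordinate. By Proposition~\ref{P:Uniformly-bounded}, $\gU$ is uniformly bounded on $\QIS$, so the identity $\gU(f_\alpha)=C(f_\alpha)+\alpha\,\gU(\C{R}(f_\alpha))$ gives $\gU(f_\alpha)=C(f_\alpha)+O(\alpha)$ as $\alpha\to 0^+$. Substituting the functional equation $Y(\alpha)=\log(1/\alpha)+\alpha Y(\alpha_1)$ from \eqref{E:Yoccoz-properties} into $C(f)=\gU(f)-\alpha(f)\,\gU(\C{R}(f))$, the two $\alpha Y(\alpha_1)$ terms cancel, leaving the clean expression
\[C(f_\alpha)=\log\frac{r(\gD(f_\alpha))}{\alpha\,r(\gD(\C{R}(f_\alpha)))^{\alpha}}.\]
It thus suffices to show $\lim_{\alpha\to 0^+}C(f_\alpha)=\log(4\pi/|f_0''(0)|)$.

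\textbf{Existence.} For $\alpha,\beta\in(0,\alpha_*]$ the maps $f_\alpha$ and $f_\beta$ share the same $\pi$-projection $f_0$, so Proposition~\ref{P:Lipschitz} gives $|C(f_\alpha)-C(f_\beta)|\leq K_1\,d_{\log}(\alpha,\beta)$. A direct computation gives $d_{\log}(\alpha,0)=\alpha-\alpha\log\alpha\to 0$ as $\alpha\to 0^+$, hence $\{C(f_\alpha)\}_{\alpha\to 0^+}$ is a Cauchy net in $\R$ and $\ell(f_0):=\lim_{\alpha\to 0^+}C(f_\alpha)$ exists. The $Q_\alpha$ case is handled analogously using the second bullet of Proposition~\ref{P:Lipschitz}.

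\textbf{Identification.} By the existence step it suffices to evaluate the limit along one convenient sequence. I would pick $\alpha_n\to 0^+$ whose modified continued fractions have entries eventually bounded by a fixed constant, so that the companion sequence $\alpha_{n,1}$ stays in a fixed bounded-type set. Then the Yoccoz bounds applied to $\C{R}(f_{\alpha_n})$ keep $r(\gD(\C{R}(f_{\alpha_n})))$ bounded away from $0$ and $\infty$, and consequently $r(\gD(\C{R}(f_{\alpha_n})))^{\alpha_n}\to 1$. The problem thus reduces to the Douady--Ghys asymptotic $r(\gD(f_{\alpha_n}))/\alpha_n\to 4\pi/|f_0''(0)|$. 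This follows by matching the parabolic Fatou expansion $\Phi_{f_0}(w)=-2/(f_0''(0)\,w)+O(\log w)$ (together with the convergence $\Phi_{f_{\alpha_n}}\to\Phi_{f_0}$ on compacts of the attracting petal, implicit in the proof of Proposition~\ref{P:contracting}) against the linearization dictionary $\Phi_{f_{\alpha_n}}(\phi_n(z))=\log z/(2\pi\B{i}\alpha_n)+\text{const}$ on the Siegel disk, with $\phi_n$ the normalized linearizer of $f_{\alpha_n}$; the factor $2$ from the Fatou expansion combines with the $2\pi$ from the logarithm to produce $4\pi/|f_0''(0)|$.

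\textbf{Case $\alpha\to 0^-$ and main obstacle.} The left-hand limit follows identically, using the symmetry $Y(-\alpha)=Y(\alpha)$ from \eqref{E:Yoccoz-properties} and the conjugate Fatou coordinate used in the definition of $\C{R}''$ in Section~\ref{sec:prelim}. The main technical obstacle is the identification step: one must carefully track the Inou--Shishikura normalization $\Phi_h(\cp_h)=0$ and the sign conventions in $\ex$ in order to extract the precise constant $4\pi/|f_0''(0)|$ rather than an unknown multiplicative factor. The ability to restrict to a bounded-type subsequence, afforded by the existence step, is essential, since it removes the otherwise persistent oscillating term $\alpha Y(\alpha_1)$ that would obstruct a direct computation.
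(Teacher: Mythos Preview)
Your existence argument via Proposition~\ref{P:Lipschitz} is correct and clean: since $d_{\log}(\alpha,0)=\alpha(1-\log\alpha)\to 0$, the Cauchy criterion gives a limit for $C(f_\alpha)$, and boundedness of $\Upsilon$ upgrades this to a limit for $\Upsilon(f_\alpha)$. The treatment of $\alpha\to 0^-$ via complex conjugation is also fine, and matches the paper.

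The identification step, however, is where your proposal diverges from the paper and where it is incomplete. The asymptotic $r(\Delta(f_{\alpha_n}))/\alpha_n\to 4\pi/|f_0''(0)|$ that you attribute to Douady--Ghys is not a result you can simply cite; it is equivalent to the very statement you are trying to prove. Your sketch of a proof---matching the parabolic Fatou expansion against the linearizer on the intersection of petal and Siegel disk---is exactly the computation the paper carries out in Section~\ref{SS:reductions} (the maps $\Psi_f$, $\Gamma_f$, $\tau_f$, $L_f$ and the chain of asymptotics leading to Equation~\eqref{E:C(f)-expanded}), and making it rigorous requires all of that machinery, not just a one-line matching. In particular, the constant in the relation $\Phi_{f_\alpha}(\phi_\alpha(z))=\log z/(2\pi\B{i}\alpha)+\text{const}$ is precisely what encodes $C(f_\alpha)$, and extracting its limit is the entire content of the identification.

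The paper's proof is much shorter because it exploits an intermediate result from the proof of Proposition~\ref{P:Lipschitz} that you are not using: Equation~\eqref{E:C(f)-expanded},
\[
C(f)=-\log|f''(0)|+\log(4\pi)-2\pi\alpha\,\ell_f+\alpha\log\tfrac{27}{4},
\]
together with the bound $|\ell_f|\le K_{13}(1-\log\alpha)$ from Equation~\eqref{E:log-translation-main}. These give $\alpha\ell_f\to 0$ and hence $C(f_\alpha)\to\log(4\pi/|f_0''(0)|)$ immediately, with no need for a separate existence argument, a special subsequence, or a bounded-type hypothesis. Since you are already invoking Proposition~\ref{P:Lipschitz}, you have implicit access to this formula; using it directly is the right move.
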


\begin{rem}
Yoccoz \cite{Yoc95} has proved that $\gU(f)$ is uniformly bounded from below, 
when $f$ is univalent on a fixed domain and $\ga(f)$ is Brjuno. 
On the other hand, it is proved in \cite{BC04}, and differently in \cite{BC11}, that $\gU$ is uniformly bounded 
from above on the class of quadratic polynomials with Brjuno rotation number at $0$. 
It is proved in \cite{Ch10-I}, using a different argument, that $\gU$ is uniformly bounded from above on the set of maps in $\QIS$ with Brjuno rotation number at zero. 
We repeat a slight modification of this proof here, for the readers convenience.  
\end{rem}

\begin{rem}
The operator $\C{R}$ studied here is fiber preserving. 
That is, it sends all the maps with the same rotation number $\ga$ at $0$ to maps with the 
rotation number $-1/\ga$ at zero. 
There is an analogous notion of (bottom) near-parabolic renormalization that is defined 
by considering the ``return map'' on sectors landing at the other fixed point close to $0$: $\gs$. 
However, this near-parabolic renormalization operator (associated with $\gs$) is not 
fiber preserving. 
That is because the multiplier at the $\gs$-fixed point not only depends on the multiplier at $0$ 
but also depends on the non-linearity of the map. 
The statement of Proposition~\ref{P:contracting} has been recently announced in \cite{ChSh14} 
for this renormalization as well. 
\end{rem}

In the next subsection we reduce these propositions to some statements on the dependence of the 
perturbed Fatou coordinate on the map. 

\subsection{Reduction to the asymptotic expansion}\label{SS:reductions}
Substituting the definition of $\Upsilon$ in the definition of $C$, and using Equation~\eqref{E:Yoccoz-properties}, 
we get that for every $\ga\in (0,1/2)$
\begin{equation}\label{E:C(f)}
C(f)=\log r(f)-\ga(f) \log r(\C{R}(f))-\log \ga(f).
\end{equation}

For $f\in \QIS$, we let $\gD(f)$ denote the Siegel disk of $f$, when $f$ is linearizable. 
Define
\[\wt{\gD}(f):= \ex^{-1}(\gD(f)),\] 
where $\ex$ is defined in Equation~\eqref{E:ex}, and 
\[\C{H}(r):= \{a\in \CC\mid \im a> r\}
.\] 
By the definition of the conformal radius, there is a conformal map 
\[\gY_f:\C{H}(-\log r(f))\to  \wt{\gD}(f),\] 
such that for every $a\in \C{H}(-\log r(f))$ we have 
\[\gY_f(a+2\gp)=\gY_f(a)+1 \tand \gY_f(a)-(\frac{a}{2\gp}-\frac{\B{i}}{2\gp}\log \frac{27}{4})\to 0,
 \tas \im a\to +\infty.\]
Similarly, we have 
\[\gY_{\C{R}(f)}:\C{H}(-\log r(\C{R}(f)))\to \wt{\gD}(\C{R}(f)),\]
satisfying the above conditions. See Figure~\ref{F:asymptotes}.

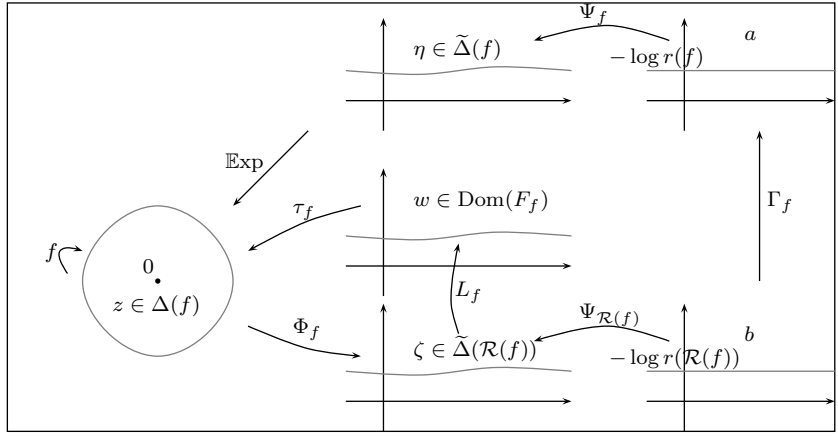
\begin{figure}
\begin{center}
\begin{pspicture}(11,5.7)
\pspolygon[linewidth=.5pt](0,0)(11,0)(11,5.7)(0,5.7)
\psline[linewidth=.5pt]{->}(8.5,.4)(11,.4)
\psline[linewidth=.5pt]{->}(9,0)(9,1.7)
\psline[linewidth=.5pt,linecolor=gray](8.5,.8)(11,.8)
\put(8,.9){\tiny $-\log r(\C{R}(f))$}
\put(9.8,1.2){\tiny $b$}
\psline[linewidth=.5pt]{->}(10,2)(10,4)
\put(10.1,3){\tiny $\gG_f$}
\pscurve[linewidth=.5pt]{->}(8.8,1.2)(8,1.4)(7,1.2)
\put(7.6,1.5){\tiny $\gY_{\C{R}(f)}$}

\psline[linewidth=.5pt]{->}(4.5,.4)(7.5,.4)
\psline[linewidth=.5pt]{->}(5,0)(5,1.7)
\pscurve[linewidth=.5pt,linecolor=gray](4.5,.8)(5.5,.75)(6.5,.85)(7.5,.8)
\put(5.4,1){\tiny $\gz\in \wt{\gD}(\C{R}(f))$}
\pscurve[linewidth=.5pt]{->}(6,1.3)(5.9,1.7)(6,2.5)
\put(5.95,1.8){\tiny $L_f$}

\psline[linewidth=.5pt]{->}(8.5,4.4)(11,4.4)
\psline[linewidth=.5pt]{->}(9,4)(9,5.5)
\psline[linewidth=.5pt,linecolor=gray](8.5,4.8)(11,4.8)
\put(8,4.9){\tiny $-\log r(f)$}
\put(9.8,5.2){\tiny $a$}
\pscurve[linewidth=.5pt]{->}(8.8,5.2)(8,5.4)(7,5.2)
\put(7.6,5.5){\tiny $\gY_f$}

\psline[linewidth=.5pt]{->}(4.5,4.4)(7.5,4.4)
\psline[linewidth=.5pt]{->}(5,4)(5,5.5)
\pscurve[linewidth=.5pt,linecolor=gray](4.5,4.8)(5.5,4.75)(6.5,4.85)(7.5,4.8)
\put(5.4,5){\tiny $\gh\in \wt{\gD}(f)$}
\psline[linewidth=.5pt]{->}(4,4)(3,3)
\put(2.9,3.5){\tiny $\ex$}

\psline[linewidth=.5pt]{->}(4.5,2.2)(7.5,2.2)
\psline[linewidth=.5pt]{->}(5,1.8)(5,3.5)
\pscurve[linewidth=.5pt,linecolor=gray](4.5,2.6)(5.5,2.55)(6.5,2.65)(7.5,2.6)
\put(5.4,3){\tiny $w\in \Dom(F_f)$}
\pscurve[linewidth=.5pt]{->}(4.7,3)(4,2.8)(3.2,2.4)
\put(3.8,2.9){\tiny $\gt_f$}

\psdot[dotsize=2pt](2,2)
\psccurve[linewidth=.5pt,linecolor=gray](2,3)(1,2)(2,1)(3,2)
\put(1.4,1.6){\tiny $z\in \gD(f)$}
\put(1.8,2.1){\tiny $0$}
\pscurve[linewidth=.5pt]{->}(3.2,1.4)(4,1.1)(4.7,1)
\put(3.8,1.3){\tiny $\gF_f$}
\pscurve[linewidth=.5pt]{->}(.8,2.1)(.7,2.4)(1,2.4)
\put(.5,2.3){\tiny $f$}
\end{pspicture}
\caption{A schematic presentation of the maps and coordinates.}\label{F:asymptotes}
\end{center}
\end{figure}

Recall the Fatou coordinate $\gF_f: \C{P}_f\to \CC$ of $f$. 
We claim that $\gF_f^{-1}$ extends to a covering map from $\wt{\gD}(\C{R}(f))$ to 
$\gD(f)\sm\{0\}$\footnote{The difficulty here is that we do not know \textit{a priori} that the image of 
$\wt{\gD}(R(f))$ under $\gF_f^{-1}$ is equal to $\gD(f)\cap \C{P}_f$, nor in fact any of the two inclusions. 
Another is that $\gD(f)\cap \C{P}_f$ and $\gD(\C{R}(f)) \cap \Dom(\gF_f^{-1})$ may be disconnected. 
Last, it is not known whether $\gD(f)$ may have a non locally connected boundary, although it is unlikely:
it is for instance conjectured that the boundary of all polynomial Siegel disks are Jordan curves.}.
\begin{lem}
The map $\gF_f^{-1}$ extends to a covering map from $\wt{\gD}(\C{R}(f))$ to $\gD(f)\sm\{0\}$.  
\end{lem}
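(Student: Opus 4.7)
The plan is to extend $\gF_f^{-1}$ first to an upper half-plane $\{\im w>M_0\}\ci\wt\gD(\C{R}(f))$ using the Abel functional equation, and then to all of $\wt\gD(\C{R}(f))$ by identifying it with a composition of a Riemann map and the universal cover of $\gD(f)\sm\{0\}$.

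For the initial extension, I note that $\gF_f(z)\to+\B{i}\infty$ as $z\to 0$ within $\C{P}_f$, so for $M_0$ sufficiently large, $\gF_f^{-1}$ sends $\gF_f(\C{P}_f)\cap\{\im w>M_0\}$ into an arbitrarily small neighborhood of $0$, hence into $\gD(f)$. The asymptotic expansion of $\gY_{\C{R}(f)}$ near $+\B{i}\infty$ ensures, possibly enlarging $M_0$, that $\{\im w>M_0\}\ci\wt\gD(\C{R}(f))$. Using the Abel equation $\gF_f\circ f=\gF_f+1$ and forward $f$-invariance of $\gD(f)$, I set $\wt\gF_f^{-1}(w):=f^n(\gF_f^{-1}(w-n))$ for any $n\in\N$ with $w-n\in\gF_f(\C{P}_f)\cap\{\im>M_0\}$; this is independent of $n$ (by iterating the Abel equation) and defines a holomorphic extension of $\gF_f^{-1}$ to the full half-plane $\{\im w>M_0\}$ taking values in $\gD(f)\sm\{0\}$.

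For the global extension, the key observation is that $\wt\gD(\C{R}(f))$ is simply connected: the restriction of $\ex$ to $\wt\gD(\C{R}(f))$ is a covering of $\gD(\C{R}(f))\sm\{0\}$ whose deck group, generated by $w\mapsto w+1$, is an infinite cyclic group that surjects onto $\pi_1(\gD(\C{R}(f))\sm\{0\})\cong\Z$, so it is the universal cover. On the other side, the universal cover of $\gD(f)\sm\{0\}$ is given by $\pi_f:\C{H}(-\log r(f))\to\gD(f)\sm\{0\}$, $a\mapsto\phi_f(e^{\B{i} a})$, where $\phi_f$ is the Koenigs linearizer of $f$ on $\gD(f)$. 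Since both source and target of the desired map are simply connected and conformally half-planes, there is a biholomorphism $B:\wt\gD(\C{R}(f))\to\C{H}(-\log r(f))$, and the composition $\pi_f\circ B:\wt\gD(\C{R}(f))\to\gD(f)\sm\{0\}$ is a covering. On the simply connected region $\{\im w>M_0\}$, both $\wt\gF_f^{-1}$ and $\pi_f\circ B$ are holomorphic maps into $\gD(f)\sm\{0\}$, so their lifts to $\C{H}(-\log r(f))$ differ by a deck transformation $a\mapsto a+2\pi k$; after adjusting $B$ by this translation, the two maps coincide on $\{\im w>M_0\}$, and by analytic continuation on the connected domain $\wt\gD(\C{R}(f))$, they agree everywhere. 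This exhibits the extension as the desired covering.

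The main obstacle is matching the two maps on $\{\im w>M_0\}$: one must verify that $\pi_f\circ B$ and $\wt\gF_f^{-1}$ differ only by a single deck transformation of $\pi_f$, which requires carefully comparing their asymptotic behavior as $\im w\to+\infty$ (both should approach $0\in\partial\gD(f)$ along matching directions dictated by the normalizations of $\phi_f$ and $\gY_f$). A subsidiary task is to confirm the image of the extension is \emph{all} of $\gD(f)\sm\{0\}$: this follows from its image being $f$-invariant in $\gD(f)$ and containing arbitrarily small punctured neighborhoods of $0$, combined with connectedness.
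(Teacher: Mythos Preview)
There is a genuine gap in your global extension step. You assert that the lifts of $\wt\gF_f^{-1}$ and $\pi_f\circ B$ to $\C{H}(-\log r(f))$ differ by a deck transformation, but this is not so: lifts of \emph{the same} map through a covering differ by deck transformations, whereas $\wt\gF_f^{-1}$ and $\pi_f\circ B$ are two a priori unrelated holomorphic maps into $\gD(f)\sm\{0\}$. The biholomorphisms $B:\wt\gD(\C{R}(f))\to\C{H}(-\log r(f))$ form a three-real-parameter family, and post-composing by a deck translation adjusts only one discrete parameter; nothing you have written forces any choice of $B$ to make $\pi_f\circ B$ agree with $\wt\gF_f^{-1}$ on the half-plane. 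What is actually required is to lift $\wt\gF_f^{-1}$ to some $L:\{\im w>M_0\}\to\C{H}(-\log r(f))$ and then prove that $L$ extends to an isomorphism of $\wt\gD(\C{R}(f))$ onto $\C{H}(-\log r(f))$---but that is essentially the content of the lemma. Your surjectivity remark at the end is also insufficient: an $f$-invariant connected open subset of $\gD(f)\sm\{0\}$ containing a punctured neighborhood of $0$ can perfectly well be the region inside a proper equipotential, so connectedness alone does not give the full punctured disk.

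The paper's route is rather different. It first extends $\gF_f^{-1}$ to all of $\wt\gD(\C{R}(f))$ directly, using the return-map definition of $\C{R}(f)$ (not merely the Abel equation near $+\B{i}\infty$): if $\ex(\gz)\in\gD(\C{R}(f))$, then the forward $f$-orbit of the corresponding point is infinite and bounded, hence lies in the Fatou set, hence in $\gD(f)$. Surjectivity onto $\gD(f)\sm\{0\}$ then needs a nontrivial dynamical input---that $\partial\gD(f)\subset\pc(f)$, which follows from the post-critical set being connected and separating $0$ from infinity (results of \cite{Ch10-I}); since the image has points of $\pc(f)$ in its closure, it cannot stop at a proper equipotential. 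Finally, the covering property is obtained by lifting the extension to a map $\D{H}\to\D{H}$ and observing that it semi-conjugates two translations (by $1$ and by the irrational rotation number of $\C{R}(f)$) to translations, which forces it to be affine; surjectivity then makes it an isomorphism.
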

\begin{proof}
Let us define the Fatou set of $f$ as the set of points that have a neighborhood on which all iterates of $f$ are defined 
and form a normal family.

Let $p_0$ denote the restriction of $\gF_f^{-1}$ to $\Dom \gF_f^{-1}\cap \wt{\gD}(\C{R}(f))$.  
By the definition of renormalization, if $\ex(\gz)\in \Dom \C{R}(f)$, it means that $p_0(\gz)$
has an orbit that exits the petal and comes back to it, while remaining in the finite union of $\C{P}_f$ and of the 
$\C{C}_f^{-j}$ and $(\C{C}^\sharp_f)^{-j}$, for $0\leq j\leq k_f$, using the notations of 
Section~\ref{SS:Inou-Shishikura}.
This implies that if for some $\gz\in \Dom p_0$, $\ex(\gz)$ can be iterated infinitely many times under 
$\C{R}(f)$, then $f$ can be iterated infinitely many times at $p_0(\gz)$. 

We break the rest of our argument into several steps. 
\medskip

{\em Step 1.}  The map $p_0$ extends to a map $p_2: \wt{\gD}(\C{R}(f)) \to \gD(f)\setminus \{0\}$.
\medskip

Let $\wt{\gD}' \ci \wt{\gD}(\C{R}(f))$ be the set of points $\gz_0$ such that there exists a continuous path  
$\gga: [0,1)\to \wt{\gD}(\C{R}(f))$ with $\gga(0)=\gz_0$, $\lim_{t\to 1}\im \gga(t)=+\infty$, 
$\lim_{t\to 1}\re \gga(t)$ exists, and $\re(\gga[0,1)) \ci (0,+\infty)$.  
Fix such $\gz_0$ and $\gga$ and let $\gz\in \gga[0,1)$. 
Choose a nonnegative integer $j$ as well as a positive real $\gep$ so that 
$\re (B(\gz-j, \gep))\ci (0,1/\ga-\B{k})$ and $\ex(B(\gz, \gep))\ci \gD(\C{R}(f))$. 
By the above argument,  since $\C{R}(f)$ can be iterated infinitely many times on $\ex(B(\gz, \gep))$, 
$f$ can be iterated infinitely many times on $p_0(B(\gz-j, \gep))$, with values in $\Dom f$. 
In particular, on $B(\gz, \gep)$ we can extend $p_0$ as $p_1(\cdot):=f^{j}\circ p_0(\cdot-j)$. 
It follows from the Abel functional equation that this extension of $p_0$ is independent of the choice of $j$, 
and hence, $p_1$ is uniquely determined on $B(\gz, \gep)$. 
Moreover, since $p_1$ is a non-constant open mapping, $f$ can be iterated infinitely many times on a 
neighborhood of $p_1(\gz)$,  with values in the bounded set mentioned above. 
Hence, $p_1(\gz)$ belongs to the Fatou set of $f$. 
As $\gz\in \gga[0,1)$ was arbitrary, we have a well-defined holomorphic map $p_1$ defined on a neighborhood of 
$\gga[0,1)$, with values in the Fatou set of $f$. 
Since the upper end of the curve is mapped into $\gD(f)$, and $\gD(f)$ is the connected component of the Fatou 
set on which the map is linearizable, $p_1(\gz_0)$ belongs to $\gD(f)\setminus \{0\}$. 

For each $\gz\in \wt{\gD}(\C{R}(f))$, there exists an integer $n$ such that $\gz+n\in \wt{\gD}'$.
Thus, $p_1(\gz+n)$ is defined and belongs to $\gD(f)$. 
Let $f_\gD: \gD(f)\to \gD(f)$ denote the restriction of $f$ to $\gD(f)$. 
It is a bijection. 
By the equivariance property of $p_1$, $p_2(\gz):= f_\gD\co{(-n)}(p_1(\gz+n))$ provides a well-defined extension 
of $p_1$.   

We now have an extension $p_2: \wt{\gD}(\C{R}(f)) \to \gD(f)\setminus \{0\}$ of $p_0$ that is holomorphic and 
satisfies $p_2(\gz+n)= f_\gD\co{n}(p_2(\gz))$, for all $n\in \D{Z}$ and $\gz\in \wt{\gD}(\C{R}(f))$.
\medskip

{\em Step 2.} The map $p_2: \wt{\gD}(\C{R}(f)) \to \gD(f)\setminus \{0\}$ is surjective. 
\medskip

To prove this we use a control on the post-critical set, well-known in the case of quadratic maps, and extended to the 
Inou-Shishikura class by the first author in \cite{Ch10-I}. 
Let $\pc(f)$ denote the closure of the orbit of the critical value of $f$. 
Recall that $f$ is infinitely renormalizable in the sense of Inou-Shishikura, with all consecutive renormalizations in their
class. 
It follows from the definition of renormalization that the critical point of $f$ can be iterated infinitely many times in 
$\Dom f$, and the closure of this orbit is contained in this domain (see \cite[Proposition 2.4]{Ch10-I} 
or \cite[Proposition 2.3]{Ch10-II}). 
On the other hand, it is proved in \cite[Corollary 4.9]{Ch10-I} that $\pc(f)$ forms a connected subset of 
$\Dom f$ separating zero from a neighborhood of infinity. 
This implies that the critical orbit is recurrent.
The connected component of $\D{C}\setminus \pc(f)$ containing zero is full, and must be equal to the Siegel disk 
of $f$. 
In particular, the boundary of the Siegel disk is contained in the post-critical set.
If we apply this to $\C{R}(f)$, 
we have the same results for $\pc(\C{R}(f))$, and furthermore, by the definition of renormalization, 
$\ex\circ \gF_f (\pc(f)\cap \C{P}_f)= \pc(\C{R}(f))$.    
  
Recall that the equipotentials of a Siegel disk are the images of the round circles about zero under the linearizing map.
Let $E_r$, for $r\in [0,1)$, denote the equipotentials within $\gD(f)$.
By the conjugacy property of $p_2$, the image of $p_2:\wt{\gD}(\C{R}(f)) \to \gD(f)\sm\{0\}$ is an open 
subset of $ \gD(f)$ which is connected, invariant under $f_\gD$, and contains a punctured  neighborhood of zero. 
Hence, if the map is not surjective, the image must be the region bounded by some equipotential $E_r$, $r<1$, minus 
the origin.
However, since the image has a point of $\pc(f)$ in its closure (because $\gD(\C{R}(f))$ has points of $\pc(\C{R}(f))$
in its closure), the only possibility is that it is the whole of $\gD(f)$.

\medskip


{\em Step 3.} The map $p_2: \wt{\gD}(\C{R}(f)) \to \gD(f)\setminus \{0\}$ is a covering. 
\medskip

Let $T_b$ denote the translation by $b$ defined on $\D{C}$. 
Consider the conformal isomorphism from the upper half plane $\D{H}$ to $\wt{\gD}(\C{R}(f))$ that commutes 
with $T_1$. 
Also, consider the universal covering map from $\D{H}$ to $\gD(f)\setminus \{0\}$, with deck transformation 
group generated by $T_1$. 
Then, lift $p_2$ to a map $\wt{p}_2:\D{H}\to \D{H}$. 
We claim that $p_2$ semi-conjugates $T_1$ to $f$ and 
a lift of $R(f)$ on $\wt{\gD}(\C{R}(f))$ to some iterate of $f$. 
The first claim comes from the definition and the second claim follows from the analytic continuation once we 
know it near infinity in $\wt{\gD}(\C{R}(f))$. 
By lifting, one concludes that $\wt{p}_2$ semi-conjugates (note that we do not write conjugate because we do 
not have injectivity yet) $T_1$ to a translation and $T_b$ to a translation, where $b$ is the rotation number 
of $\C{R}(f)$. 
Since $b$ is irrationnal, $\wt{p}_2$ has to be of the form $z \mapsto \theta.z+c$ for some $c\in \D{C}$ 
(the first claim implies that $f$ has a Fourier series expansion and the second claim implies that only few terms 
are non zero). 
From the surjectivity (in step $2$) we deduce that $c$ is real. 
Hence $\wt{p}_2$ is an isomorphism. 
Thus $p_2$ is a (universal) cover.

\end{proof}

The covering $\gF_f^{-1}$ provides us with a conformal isomorphism 
\[\gG_f:\C{H}(-\log r(\C{R}(f)))\to \C{H}(-\log r(f)),\]
defined as the lift $\gY_f^{-1} \circ  \ex^{-1} \circ \gF_f^{-1}  \circ \gY_{\C{R}(f)}$.
Since $\im \gG_f(t \B{i})\to +\infty$, as real $t$ tends to $+\infty$, $\gG_f$ must be an affine map of the form 
$b\mapsto Ab+B$. 
Looking at the asymptotic behavior of $\gG_f$ near infinity, we find $A=\ga$. 
Indeed, by the functional equation for $\gF_f$, $\gG_f$ maps a segment with end points $t\B{i}$ and 
$t\B{i}+2\pi$ to a curve with end points ``nearly'' $2\pi \ga$ apart, with equality as $t\to \infty$.
Adding a real constant to $\gY_f$ if necessary, and using \eqref{E:C(f)} we find 
\begin{equation}\label{E:gamma-expansion}
\gG_f(b)=\ga b-\B{i}\log \ga-\B{i}C(f).
\end{equation}

By virtue of this isomorphism, the relation between the two conformal radii can be understood from the 
asymptotic expansion of $\gG_f$ near infinity. 
To understand the dependence of $C(f)$ on $f$, we analyze how the asymptotic expansion of $\gG_f$ 
depends on $f$.

Let $f$ be a map as above, and let $\gs_f$ denote the distinguished non-zero fixed point of $f$ mentioned in 
Section~\ref{SS:Inou-Shishikura} with asymptotic expansion $\gs_f=-4\gp\ga(f)\B{i}/f''(0)+o(\ga(f))$.
We consider the covering map
\begin{equation}\label{E:covering}
\gt_f(w):=\frac{\gs_f}{1-e^{-2\gp\ga(f)\B{i}w}}:\CC\to \hat{\CC}\sm\{0, \gs_f\}.
\end{equation}
with deck transformation generated by $w\to w+1/\ga(f)$.
We have $\gt_f(w)\to 0$, as $\im w\to +\infty$, and $\gt_f(w)\to \gs_f$, as $\im w\to -\infty$.

Next, we lift $f$ under $\gt_f$ to obtain a map on an appropriate domain. 
When $f$ belongs to $\IS_{(0,\ga^*]}$, it lifts under $\gt_f$ to a map $F_f$ defined on 
$\Dom F_f:=\gt_f^{-1}(\Dom f)$. 
When $f=Q_\ga$, $\gs_\ga=(1-e^{2\pi \ga\B{i}}) 16 e^{-4\pi \ga\B{i}}/27$ is the non-zero 
fixed point of $Q_\ga$. 
We have $Q_\ga^{-1}(0)=\{0,-16 e^{-2\pi\ga \B{i}}/27\}$ and 
$Q_\ga^{-1}(\gs_\ga)=\{\gs_\ga, -16e^{-4\pi\ga\B{i}}/27\}$.
Combining with,
\[|-16 e^{-2\pi\ga \B{i}}/27-\gs_\ga|=16/27, |-16e^{-4\pi\ga\B{i}}/27-\gs_\ga|\geq16/27,\] 
we observe that the non-zero pre-image of $0$ under $Q_\ga$
and the non-$\gs_\ga$ pre-image of $\gs_{\ga}$ under $Q_\ga$ do not belong to 
$B(0,16/27)\u B(\gs_\ga, 16/27)\cup \C{P}_{Q_\ga}$. 
Hence, one can lift $Q_\ga$ on this set, to obtain a map $F_{Q_\ga}$ defined on 
$\gt_{Q_\ga}^{-1}(B(0,16/27)\u B(\gs_\ga, 16/27)\cup \C{P}_{Q_\ga})$. 
In both cases, $F_f(w)\sim w+1$ as $\im w\to \infty$, and $ \gt_f(F_f(w))=f(\gt_f(w))$, for all 
$w\in \Dom F_f$. 
We will further analyze these lifts in Subsection~\ref{SS:the-lift}.

We decompose the map $\gF_f^{-1}:\{\gx \in \CC\mid \re \gx \in (0,1/\ga(f)-\B{k})\}\to \C{P}_f$ into two maps as
$\gF_f^{-1}=\gt_f\circ L_f$, where the inverse branch of $\gt_f$ is chosen so that $\gt_f^{-1}(\C{P}_f)$ 
separates $0$ and $1/\ga(f)$.
The conformal map $L_f$ satisfies 
\begin{gather*}
L_f(\gx+1)=F_f(L_f(\gx)), \tand  \gt_f(L_f(1))=-4/27.
\end{gather*}
Define the real constant    
\[\ell_f:=\im \lim_{\im\gx\to\infty} (L_f(\gx)-\gx).\]

\begin{lem}\label{L:equi-Lipschitz}
For every $f\in \QIS$, $\lim_{\im\gx\to\infty} (L_f(\gx)-\gx)$ exists and is finite. 
Moreover, there exists a constant $K_3$ such that 
\begin{itemize}
\item for all $f,g\in \IS_{(0,\ga_*]}$ 
\begin{equation*}
|\ga(f)\ell_f-\ga(g)\ell_g|\leq K_3\big[ \Td(\pi(f), \pi(g))+d_{\log}(\ga(f),\ga(g))\big];
\end{equation*}
\item for all $\ga, \gb \in (0,\ga_*]$
\begin{equation*}
|\ga\ell_{Q_\ga}-\gb\ell_{Q_\gb}|\leq K_3d_{\log}(\ga,\gb).
\end{equation*}
\end{itemize}
\end{lem}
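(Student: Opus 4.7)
The proof splits into two parts: existence of the limit, and the Lipschitz estimate for $\ga\ell_f$.

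For existence, the key input is the asymptotic form of the lift $F_f$. The covering relation $\gt_f\circ F_f=f\circ\gt_f$, combined with the expansion $f(z)=\ea z+O(z^2)$ near $0$ and the asymptotic $\gt_f(w)\sim -\gs_f e^{2\gp\ga\B{i}w}$ as $\im w\to+\infty$, yields
\[F_f(w)=w+1+\gh_f(w),\qquad \gh_f(w)=O\bigl(e^{-2\gp\ga\im w}\bigr).\]
Setting $h(\gx):=L_f(\gx)-\gx$, the functional equation $L_f(\gx+1)=F_f(L_f(\gx))$ becomes $h(\gx+1)-h(\gx)=\gh_f(L_f(\gx))$, which decays exponentially in $\im\gx$. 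The corrected function
\[\tilde h(\gx):=h(\gx)+\sum_{n\geq 0}\gh_f(L_f(\gx+n))\]
converges uniformly on half-planes $\{\im\gx>H\}$ and is exactly $1$-periodic, so through $\gz=e^{2\gp\B{i}\gx}$ it becomes a bounded holomorphic function on a punctured neighbourhood of $0$ and extends with a finite value at $\gz=0$. Since the tail sum vanishes as $\im\gx\to+\infty$, $h$ inherits the same finite limit, whose imaginary part is $\ell_f$.

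For the Lipschitz estimate I would handle the two directions of variation separately. Fix $\ga(f)=\ga(g)=\ga$ and vary the non-linearity: using the identification of $\IS_0$ with the Teichm\"uller space of $\CC\sm\ol V$, pick a quasi-conformal homeomorphism $\gc$ of $\CC$ with $\log\Dil\gc=\Td(\pi(f),\pi(g))$, fixing $0$ and sending $\gs_f$ to $\gs_g$, conjugating $f$ to $g$ near their petals. Lift $\gc$ through $\gt_f,\gt_g$ to a quasi-conformal homeomorphism $\tilde\gc$ of $\CC$ with the same dilatation; the composition $L_g^{-1}\circ\tilde\gc\circ L_f$ is then a quasi-conformal homeomorphism of the strip $\{0<\re\gx<1/\ga-\B{k}\}$ commuting asymptotically with integer translation and normalized at $\gx=1$. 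Classical asymptotic distortion estimates for quasi-conformal maps of infinite strips \cite{Ahl63,Leh76}, applied in the spirit of \cite{Ch10-I,Ch10-II}, bound the resulting translation at infinity by $C\ga^{-1}\Td(\pi(f),\pi(g))$; multiplying by $\ga$ absorbs the $1/\ga$ factor and yields the Teichm\"uller term. For fixed non-linearity and varying $\ga$, both $\gt_\ga$ and the functional equation for $L_\ga$ depend on $\ga$, the strip widens as $\ga\to 0$, and the anchoring condition $\gt_\ga(L_\ga(1))=-4/27$ produces a logarithmic drift through $\gs_\ga=-4\gp\ga\B{i}/f_0''(0)+o(\ga)$; a direct computation carried out in Subsection~\ref{SS:the-lift} shows $|\partial_\ga(\ga\ell_\ga)|=O(|\log\ga|)$, whose integral along $[\gb,\ga]$ matches $d_{\log}(\ga,\gb)$ up to a universal constant. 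The quadratic family $Q_\ga$ is treated identically, using the explicit formula for its non-zero fixed point.

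The main obstacle is the quasi-conformal step in the Teichm\"uller direction: the dilatation bound must be transported through the covering $\gt_f$ to the strip where $L_f$ lives and survive all the way to $\im\gx=+\infty$ where $\ell_f$ is read off, uniformly in $\ga\in(0,\ga_*]$, so that the prefactor $1/\ga$ coming from the strip width cancels against the $\ga$ in the target quantity $\ga\ell_f$. This uniformity relies on the $\ga$-independent petal geometry provided by Theorem~\ref{Ino-Shi2} and on the uniform constants $\B{k},\hat{\B{k}}$ from \cite[Proposition 12]{BC12}.
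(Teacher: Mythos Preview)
Your proposal has two genuine gaps.

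\textbf{Existence of the limit.} The telescoping series $\sum_{n\geq 0}\gh_f(L_f(\gx+n))$ does not converge. The decay $\gh_f(w)=O(e^{-2\gp\ga\im w})$ is in $\im w$, but as $n$ grows with $\gx$ fixed, $\im L_f(\gx+n)=\im F_f^{\circ n}(L_f(\gx))$ stays essentially bounded (since $F_f$ is close to a real translation by $1$), so the terms of your sum are all of comparable size $O(e^{-2\gp\ga\im\gx})$ and the series diverges. Moreover, the domain of $L_f$ has bounded real width $1/\ga-\B{k}$, so $\gx+n$ exits it after finitely many steps, and the periodicity argument on the punctured disk has nowhere to live. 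The paper proves existence quite differently: it builds an explicit quasi-conformal model $H_f(s,t)=(1-s)(v_f''+t\B{i})+sF_f(v_f''+t\B{i})$, sets $G_f=L_f^{-1}\circ H_f$, and shows by Green's formula on a rectangle that $\iint|\partial_{\bar\gz}G_f|$ is finite (bounded by $A(1-\log\ga)$). This, together with $\partial_\gz G_f\to 1$, gives the Cauchy criterion for $G_f(\gz)-\gz$ and hence for $L_f(\gx)-\gx$.

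\textbf{Lipschitz in the Teichm\"uller direction.} You assume a quasi-conformal $\gc$ that \emph{conjugates} $f$ to $g$ near their petals with $\log\Dil\gc=\Td(\pi(f),\pi(g))$. No such conjugacy exists in general: the map $\hat\gf_g\circ\hat\gf_f^{-1}$ realizing the Teichm\"uller distance intertwines the common model $P$ on the codomain side, not the dynamics of $f$ and $g$; two maps in $\IS_\ga$ with the same $\ga$ can have genuinely different dynamics (different post-critical sets, Siegel disks, etc.) and are not qc-conjugate with small dilatation. Without the conjugacy, the composition $L_g^{-1}\circ\tilde\gc\circ L_f$ does not commute with integer translation, even asymptotically, and the strip argument collapses. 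The paper avoids conjugacy altogether: using the models $H_g,H_h$ it forms $\gO=G_h\circ G_g^{-1}$ on the cylinder $\D{C}/\D{Z}$; the dilatation $\gm_\gO$ is controlled pointwise by $\Td(\pi(g),\pi(h))\cdot|\gt_g(L_g(\gx))|$ via the estimates on $F_f-F_g$ (Lemma~\ref{L:vertical-estimate-lift}); and the asymptotic translation of $\gO$ is extracted by Green's formula with the explicit kernel $\frac{\tan(\gp\gx)+\B{i}}{2\tan(\gp\gx)}$, yielding the bound $A\,\Td\cdot\log\ga^{-1}$, which after multiplication by $\ga$ gives the first inequality. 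Your horizontal-direction sketch ($|\partial_\ga(\ga\ell_\ga)|=O(|\log\ga|)$ and integrate) matches the paper's Lemma~\ref{L:horizontal-direction}, but that too is obtained by the same model-map/Green's-formula machinery rather than by a direct computation.
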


In a moment we will be able to prove Proposition~\ref{P:Lipschitz} using the above lemma. 
Before this can happen, we need to relate $\Td$ on $\IS_0$ to some pointwise differences.
The proof uses Koebe's distortion Theorem \cite{P} and Lehto's Majorant principle, \cite[Section~3.5]{Leh87}. 
We state the former one as it is used several times in the sequel.

\begin{thm}[Koebe]\label{T:distortion-estimates}
For every univalent map $h: B(0,1)\to \D{C}$ with $h(0)=0$ and $h'(0)=1$,  
\[\frac{1-|z|}{1+|z|^3} \leq |h'(z)|\leq \frac{1+|z|}{1-|z|^3}.\]
\end{thm}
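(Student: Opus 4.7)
The statement is the classical Koebe distortion theorem; the denominators $1\pm|z|^3$ appear to be a typographical compression of $(1\pm|z|)^3$, since the literal upper bound fails already for the Koebe function $h(z)=z/(1-z)^2$ (there $|h'(r)|=(1+r)/(1-r)^3$ exceeds $(1+r)/(1-r^3)$ on $(0,1)$). I will therefore sketch a proof of the Koebe distortion bounds with $(1\pm|z|)^3$ in the denominator, which is what the paper actually needs wherever the theorem is invoked (uniform control on precompact subsets of the unit disk).

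The plan is the classical two-step argument. First, I derive a pointwise bound on the logarithmic derivative $h''/h'$ valid at every interior point. Fix $w\in B(0,1)$ and let $\phi_w(z)=(z+w)/(1+\bar w z)$ be the disk automorphism sending $0$ to $w$, so that $\phi_w'(0)=1-|w|^2$. The composition
\[H(z):=\frac{h(\phi_w(z))-h(w)}{(1-|w|^2)\,h'(w)}\]
is univalent on $B(0,1)$ and normalized by $H(0)=0$, $H'(0)=1$, hence lies in the Schlicht class $S$. Expanding $H(z)=z+a_2 z^2+\cdots$ and applying Bieberbach's inequality $|a_2|\leq 2$ (obtained by applying the area theorem to $1/\sqrt{H(z^2)}\in\Sigma$), a direct chain-rule computation of $H''(0)$ yields
\[\Bigl|(1-|w|^2)\frac{h''(w)}{h'(w)}-2\bar w\Bigr|\leq 4.\]

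For the second step I parametrize the radial segment from $0$ to $z=re^{i\theta}$ as $w=te^{i\theta}$, $t\in[0,r]$. Since $\frac{d}{dt}\log|h'(te^{i\theta})|=\re\bigl(e^{i\theta}\,h''(te^{i\theta})/h'(te^{i\theta})\bigr)$, the pointwise bound above becomes the differential inequality
\[\frac{2t-4}{1-t^2}\leq \frac{d}{dt}\log|h'(te^{i\theta})|\leq \frac{2t+4}{1-t^2},\]
and integrating from $0$ to $r$ produces $\log\frac{1-r}{(1+r)^3}\leq \log|h'(z)|\leq \log\frac{1+r}{(1-r)^3}$, which exponentiates to the desired distortion estimates.

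The only genuine subtlety is the Bieberbach bound $|a_2|\leq 2$ on the class $S$, which is the heart of the theorem and truly requires the area theorem (or an equivalent input); every other ingredient is a calculus computation built on top of it. There is no further obstacle.
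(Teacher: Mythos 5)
Your proof is correct, and your reading of the statement is right: the denominators should be $(1\pm|z|)^3$ (the printed $1\pm|z|^3$ is a typo, as the Koebe function $z/(1-z)^2$ shows). The paper does not prove this theorem at all --- it simply quotes it as the classical Koebe distortion theorem with a reference --- and your argument (Bieberbach's $|a_2|\leq 2$ via the area theorem, the bound $\bigl|(1-|w|^2)h''(w)/h'(w)-2\bar w\bigr|\leq 4$ from the Koebe transform, then radial integration) is exactly the standard proof underlying that citation.
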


Recall the domain $V$ defined in~\eqref{V}. 
\begin{lem}\label{L:metrics-related}
Let $V'\subset V$ be a simply connected neighborhood of zero that is compactly contained in $V$. 
There exists a constant $K_4$ such that for all $f,g \in \IS_\D{R}$, all $z\in V_g\cap V_f$ with 
$\gf^{-1}_f(z), \gf^{-1}_g(z)\in V'$ we have 
\[|f(z)-g(z)|\leq K_4 |z| \big [|\ga(f)-\ga(g)|+\Td(\pi(f), \pi(g))\big].\]   
In particular, for a possibly larger constant $K_4$ we have
\[|f''(0)-g''(0)|\leq K_4 \big [|\ga(f)-\ga(g)|+\Td(\pi(f), \pi(g))\big ].\] 
\end{lem}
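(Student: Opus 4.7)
The plan is to separate the effects of rotation (change in $\ga$) from those of nonlinearity (change in $\pi(f)$) and bound each contribution. Set $f_0=\pi(f)$, $g_0=\pi(g)\in\IS_0$ and write $f(z)=f_0(e^{2\gp\ga(f)\B{i}}z)$, $g(z)=g_0(e^{2\gp\ga(g)\B{i}}z)$. Letting $w_f:=e^{2\gp\ga(f)\B{i}}z$ and $w_g:=e^{2\gp\ga(g)\B{i}}z$, decompose
\[
f(z)-g(z)=\bigl[f_0(w_f)-f_0(w_g)\bigr]+\bigl[f_0(w_g)-g_0(w_g)\bigr].
\]
The hypothesis $\gf_f^{-1}(z)\in V'$ places $w_f$ in the compact set $\gf_{f_0}(\ol{V'})$, compactly contained in $V_{f_0}$, and symmetrically for $w_g$ in $V_{g_0}$. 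One may restrict to $|\ga(f)-\ga(g)|$ smaller than some $\gd>0$ (so that $w_g$ also lies in $V_{f_0}$), the complementary regime being trivial after enlarging $K_4$ using the Koebe bounds $|f(z)|,|g(z)|\leq C|z|$.

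For the rotation bracket, Koebe's distortion theorem together with the precompactness of $\IS_0$ in the compact-open topology gives a uniform bound on $|f_0'|$ over the compact set containing $w_f$ and $w_g$, so the mean value theorem along the segment from $w_g$ to $w_f$ yields
\[
|f_0(w_f)-f_0(w_g)|\leq C_1|w_f-w_g|\leq 2\gp C_1 |z|\,|\ga(f)-\ga(g)|.
\]

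For the nonlinearity bracket, set $\gF:=f_0-g_0$. Since $f_0(0)=g_0(0)=0$ and $f_0'(0)=g_0'(0)=1$, one has $\gF(0)=\gF'(0)=0$, so $w\mapsto \gF(w)/w^2$ is holomorphic near $0$. Precompactness of $\IS_0$ makes the evaluation $f_0\mapsto f_0(w)$ a uniformly bounded holomorphic functional on the Teichm\"uller space identified with $\IS_0$, so Lehto's Majorant Principle yields $|\gF(w)|\leq M\,\Td(f_0,g_0)$ for $w$ in a fixed compact neighborhood of $0$. The maximum principle applied to $\gF(w)/w^2$ on a slightly smaller disk then gives $|\gF(w)|\leq C_2|w|^2\Td(f_0,g_0)$, and since $|w_g|=|z|$ is bounded, this reads $|\gF(w_g)|\leq C_2'|z|\Td(f_0,g_0)$. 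Adding the two brackets gives the first inequality.

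For the $f''(0)$ estimate, differentiating $f(z)=f_0(e^{2\gp\ga(f)\B{i}}z)$ twice at $0$ gives $f''(0)=e^{4\gp\ga(f)\B{i}}f_0''(0)$ and likewise for $g$, so
\[
|f''(0)-g''(0)|\leq |f_0''(0)|\,|e^{4\gp\ga(f)\B{i}}-e^{4\gp\ga(g)\B{i}}|+|f_0''(0)-g_0''(0)|,
\]
the first term being controlled by $28\gp|\ga(f)-\ga(g)|$ via \eqref{E:bound-on-f''}. Cauchy's integral formula on a fixed small circle $|w|=\gep$ contained in $\gf_{f_0}(V')\cap\gf_{g_0}(V')$ bounds the second term by $2\gep^{-2}\sup_{|w|=\gep}|f_0(w)-g_0(w)|$, to which the first inequality applied with $\ga(f)=\ga(g)$ supplies the needed estimate. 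The main obstacle is verifying that Lehto's Majorant Principle applies in the required form, which amounts to checking that the evaluation map at a fixed $w$ is a bounded holomorphic functional on the Teichm\"uller space structure of $\IS_0$; the boundedness comes from Koebe/precompactness while the holomorphic dependence on the Teichm\"uller parameter is a classical fact from quasiconformal theory.
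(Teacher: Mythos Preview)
Your split into a rotation piece and a nonlinearity piece is sound, and both the rotation bracket and the $f''(0)$ estimate are handled correctly. The gap is in the nonlinearity bracket, in how you invoke Lehto's Majorant Principle.

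The functional you use, $h\mapsto h(w)$ on $\IS_0$, is only defined when $w\in V_h=\gf_h(V)$, and that domain varies with $h$. The common domain $\bigcap_{h\in\IS_0}V_h$ is only guaranteed to contain a fixed small disk $B(0,r_0)$, so your bound $|\gF(w)|\leq M\,\Td(f_0,g_0)$ and the subsequent maximum-principle refinement $|\gF(w)|\leq C_2|w|^2\Td(f_0,g_0)$ are only available for $|w|\leq r_0$. But $w_g\in\gf_{g_0}(V')$, which for a general $V'\Subset V$ is not contained in $B(0,r_0)$; hence the step ``since $|w_g|=|z|$ is bounded, $|\gF(w_g)|\leq C_2'|z|\Td(f_0,g_0)$'' does not follow --- for $|w_g|>r_0$ you have no control on $\gF(w_g)$ at all. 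Your closing paragraph locates the obstacle in boundedness and holomorphicity of the functional, but the real issue is that the functional is not even \emph{defined} on the whole holomorphic disk joining $f_0$ to $g_0$.

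The paper avoids this by passing to the $\gf$-side, where the source domain is fixed. It builds the one-parameter family $G_a$ of normalized solutions of the Beltrami equation with coefficient $a\gm$, where $\gm$ is the dilatation of $\hat\gf\circ\hat\gf_g^{-1}$; since $\gm$ vanishes on $V_g$, every $G_a$ is conformal on the \emph{fixed} set $V_g$, so $a\mapsto(G_a(z)-z)/z$ is globally defined and bounded for each $z\in\gf_g(V')$, and the Schwarz lemma in $a$ yields the bound with the factor $|z|$. Your route can be repaired in the same spirit: apply Lehto to $\gy\mapsto(\gy(v)-\gf_{g_0}(v))/v$ on the space of normalized univalent maps $\gy\colon V\to\D{C}$ (with $v\in V'$, so the domain is fixed), deduce $|\gf_{f_0}(v)-\gf_{g_0}(v)|\leq C|v|\,\Td(f_0,g_0)$ on $V'$, and then transfer this to $|f_0(w_g)-g_0(w_g)|$ via $f_0=P\circ\gf_{f_0}^{-1}$ together with the Koebe bounds on $(\gf_{f_0}^{-1})'$ and $P'$ over $V'$.
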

\begin{proof}
First an elementary reduction as follows, 
\begin{align*} 
|f(z)-g(z)|& \leq |\gf_f^{-1}(z)-\gf_g^{-1}(z)| \cdot \sup_{w\in V'} |P'(w)|\\
&\leq |\gf_f^{-1}\circ \gf_g\circ \gf_g^{-1}(z)-\gf_f^{-1}\circ \gf_f\circ\gf_g^{-1}(z)|\cdot \sup_{w\in V'} |P'(w)|\\
&\leq\sup_{w\in V'}|\gf_f'(w)|^{-1}\cdot  |\gf_g(\gf_g^{-1}(z))-\gf_f(\gf_g^{-1}(z))| \cdot \sup_{w\in V'} |P'(w)|\\
&\leq A_1  |\gf_g(\gf_g^{-1}(z))-\gf_f(\gf_g^{-1}(z))|
\end{align*}
where, $A_1$ is a constant depending only on $V'$ (in fact, only on $\mod (V\setminus \ol{V'})$), 
by Koebe distortion Theorem.

On the other hand with $\gf:=\gf_g'(0)\gf_f/\gf_f'(0)$ and $w=\gf_g^{-1}(z)\in V'$,
\[|\gf_f(w)-\gf_g(w)| \leq |\gf_f(w)-\gf(w)|+|\gf(w)-\gf_g(w)|\]
For the first term in the right hand side, 
\begin{align*}
|\gf_f(w)-\gf(w)| &\leq |\gf_f(w)| \cdot |1-\frac{\gf_g'(0)}{\gf_f'(0)}| \\
&\leq |\gf_f\circ \gf_g^{-1}(z)| \cdot |1-\frac{\gf_g'(0)}{\gf_f'(0)}| \\
&\leq A_2 |z|\cdot |\ga(f)-\ga(g)|
\end{align*}
for some constant $A_2$, by  Koebe distortion Theorem. 

For the second term, 
\begin{align*}
|\gf(w)-\gf_g(w)|& = |\gf\circ \gf_g^{-1}(z)-z|\\ 
&\leq A_3\dil (\hat{\gf}\circ \hat{\gf}_g^{-1})\cdot |z|
\end{align*}
where $\hat{\gf}$ and $\hat{\gf}_g$ are some quasi-conformal extensions of $\gf$ and $\gf_g$, respectively. 
To obtain this inequality, we use a generalized Lehto Majorant principle on $V$ applied to the analytic 
functional $\gF(h):=(h(z)-z)/z$, which we explain below. 

Let $\gm$ be the complex dilatation of $\hat{\gf}\circ \hat{\gf}_g^{-1}$ with 
$\|\gm\|_\infty=\dil(\hat{\gf}\circ \hat{\gf}_g^{-1} ) <1$.
For complex $a$ with $|a|< 1/\|\gm\|_\infty$, let $G_a$ denote the unique normalized solution of the 
Beltrami equation $\partial_{\bar{z}}G_a= a \gm \partial _zG_a$ with $G_a(0)=0$ and $G_a'(0)=1$. 
Note that $\gm$ is identically zero on $V\ni 0$, and hence, for every fixed $a$, $G_a$ is holomorphic on $V$. 
By the holomorphic dependence of the solution of the Beltrami equation on parameter $a$, for every fixed $z\in V$,
$G_a(z)$ is also holomorphic in $a$.
Thus, $a\mapsto (G_a(z)-z)/z$ is holomorphic in $a$.
(At $z=0$, this function is defined identically zero.)
On the other hand, when $a=0$, $G_a$ is the identity mapping and hence $(G_a(z)-z)/z=0$. 
And if $z\in \gf_g(V')$, by Koebe's distortion theorem, $|(G_a(z)-z)/z|$ is uniformly bounded 
from above by a constant $A_3$, independent of $z$ and $a$. 
This is because, $G_a\circ \gf_g$ is univalent on $V$, and therefore, $G_a$ is bounded on $\gf_g(V')$, by a 
constant depending only on $V'$. 
As, $G_a(0)=0$, $G_a(z)/z$ is uniformly bounded over $\gf_g(V')$, by the maximum principle.  
In particular at $a=1$, we obtain the desired inequality from the Schwarz lemma for the variable $a$.
 

Since $\dil=\frac{\Dil-1}{\Dil +1}\leq \log \Dil$, taking infimum over all such extensions we obtain the 
desired inequality.

The second estimate in the lemma is obtained from the first one and the Cauchy integral formula. 
\footnote{Indeed, we can improve the estimates to the second order, involving $|z^2|$ and $\Td^2$ here, 
but since the coefficients of the first terms are non-zero, we do not care about the higher order terms.}
\end{proof}

\begin{proof}[Proof of Proposition~\ref{P:Lipschitz} assuming Lemma \ref{L:equi-Lipschitz}]
The plan is to find $C(f)$ in terms of $\ell_f$, by comparing the asymptotic expansion \eqref{E:gamma-expansion} 
to the one obtained from the changes of coordinates in the renormalization.

We use the following coordinates 
\[b\in \C{H}(-\log r(\C{R}(f)),\  a\in \C{H}(-\log r(f)),\ \gz\in \wt{\gD}(\C{R}(f)),\  \gh\in \wt{\gD}(f),\  z\in \gD(f)
.\]
\begin{align*}
\im a&\approx 2\gp\im \gh+\log \frac{27}{4}\\
&=-\log |z|\\
&=-[\log |\gs_f| - \log |1-e^{-2\gp\ga(f)L_f(\gz)\B{i}}|]\\
&\approx -\log |\gs_f| + 2\gp\ga(f)\im L_f(\gz)\\
&=-\log |\gs_f|+2\gp\ga(f)\im \gz+2\gp\ga(f)\im \ell_f\\
&=-\log \ga(f) + \log \frac {|f''(0)|}{4\gp} + \im b \cdot\ga(f)- \ga(f)\log \frac{27}{4}+2\gp\ga(f)\im \ell_f
\end{align*}

Now, comparing the above expression to \eqref{E:gamma-expansion}, we have 
\begin{equation}\label{E:C(f)-expanded}
C(f)=-\log|f''(0)|+\log (4\pi)-2\pi \ga \ell_f+\ga(f)\log \frac{27}{4}.
\end{equation}
Now the estimates in Lemmas~\ref{L:metrics-related} and \ref{L:equi-Lipschitz} imply Proposition~\ref{P:Lipschitz}.
Here, we use that $\log |a|-\log |b|=\log (1+(|a|-|b|)/|b|)\leq |a-b|/|b|$, and that 
$|f''(0)|, |g''(0)|\in [2,7]$.
\end{proof}

The proof of Lemma~\ref{L:equi-Lipschitz} can be divided into two parts using the triangle inequality. 
Given maps $f$ and $g$, define the new map $h(z):=f(e^{2\pi (\ga(g)-\ga(f))\B{i}}\cdot z)$, for 
$z$ in $e^{2\pi(\ga(f)-\ga(g))\B{i}}\cdot\Dom f$. 
Note that $\ga(h)=\ga(g)$ and $\pi (h)=\pi(f)$. 
By triangle inequality 
\begin{equation}\label{E:triangle}
|\ga(f)\ell_f-\ga(g)\ell_g|\leq |\ga(f)\ell_f-\ga(h)\ell_h|+|\ga(h)\ell_h-\ga(g)\ell_g|.
\end{equation}
By virtue of the above inequality, the inequalities in Lemma~\ref{L:equi-Lipschitz} follow from the next two lemmas.
The first statement in Lemma~\ref{L:equi-Lipschitz} is proved just after Sublemma~\ref{SL:log-bound}. 
\begin{lem}\label{L:vertical-direction}
There exists a constant $K_5$ such that for all $g, h\in \IS_{(0,\ga_*]}$ with $\ga(h)=\ga(g)$, we have 
\[|\ga(h)\ell_h-\ga(g)\ell_g|\leq K_5 \Td(\pi(h), \pi(g)).\]
\end{lem}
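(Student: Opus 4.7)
The plan is to adapt the Lehto--Schwarz argument used in the proof of Lemma~\ref{L:metrics-related}, embedding $g$ and $h$ into a one-parameter holomorphic family and then applying the Schwarz lemma. Fix $\gep>0$ and choose quasi-conformal extensions $\hat{\gf}_{\pi(h)}$, $\hat{\gf}_{\pi(g)}$ realizing $\Td(\pi(h),\pi(g))$ up to error $\gep$. Let $\gm$ be the Beltrami coefficient of $\hat{\eta}:=\hat{\gf}_{\pi(h)}\circ \hat{\gf}_{\pi(g)}^{-1}$; then $k:=\|\gm\|_\infty \leq \log \Dil \hat{\eta}\leq \Td(\pi(h),\pi(g))+\gep$, and $\gm$ vanishes on $V_{\pi(g)}=\gf_{\pi(g)}(V)$ since $\hat{\gf}_{\pi(g)}$ and $\hat{\gf}_{\pi(h)}$ are holomorphic on $V$. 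For $a$ in the disk $\{|a|<1/k\}$, let $G_a$ denote the normalized solution of $\partial_{\bar z} G = a\gm\,\partial_z G$ with $G_a(0)=0$ and $G_a'(0)=1$, so $G_0=\on{id}$ and $G_1=\hat{\eta}$. Since $\gm\equiv 0$ on $V_{\pi(g)}$, the composition $\gf_a:=G_a\circ \gf_{\pi(g)}$ is univalent on $V$, and $f_a(z):=P\circ \gf_a^{-1}(e^{2\pi \ga \B{i}}z)$ defines a holomorphic family in $\IS_\ga$ with $f_0=g$ and $f_1=h$.

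Next I would verify that
\[\Phi(a):=\ga\bigl(\tilde{\ell}_{f_a}-\tilde{\ell}_g\bigr),\qquad \tilde{\ell}_f:=\lim_{\im \gx\to +\infty}\bigl(L_f(\gx)-\gx\bigr),\]
extends to a holomorphic function on $\{|a|<1/k\}$, with $\Phi(0)=0$ and $\im \Phi(1)=\ga(\ell_h-\ell_g)$. Since the perturbed Fatou coordinate $\gF_{f_a}$, the non-zero fixed point $\gs_{f_a}$, and hence $\gt_{f_a}^{-1}$ and $L_{f_a}=\gt_{f_a}^{-1}\circ \gF_{f_a}^{-1}$ all depend holomorphically on $f_a$, and $f_a$ is itself a holomorphic family in $a$, each $L_{f_a}(\gx)$ is holomorphic in $a$. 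The Abel equation $L_f(\gx+1)=F_f(L_f(\gx))$, combined with the expansion $\gt_f(w)\sim -\gs_f e^{2\pi \ga\B{i} w}$ which yields $F_f(w)=w+1+O(e^{-2\pi \ga\, \im w})$, forces exponentially fast convergence of $L_f(\gx)-\gx$ as $\im \gx\to +\infty$, locally uniformly in $a$ on $\{|a|<1/k\}$. Thus $\Phi$ is holomorphic there.

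The a priori bound $|\Phi(a)|\leq M$, with $M$ depending only on $\ga_*$, is essentially the content of Proposition~\ref{P:Uniformly-bounded} applied to the family $f_a$: combining~\eqref{E:C(f)-expanded}, i.e.\ $2\pi\ga \ell_f=\log(4\pi/|f''(0)|)+\ga\log(27/4)-C(f)$, with~\eqref{E:C(f)} expresses $\ga\ell_f$ in terms of $\gU(f)$, $\gU(\C{R}(f))$ and $\log|f''(0)|$, each of which is uniformly bounded on $\QIS$ by Proposition~\ref{P:Uniformly-bounded} and the bound~\eqref{E:bound-on-f''}. It is therefore natural to order the proofs in this section so that Proposition~\ref{P:Uniformly-bounded} is established first. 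The Schwarz lemma applied to $\Phi$ on $\{|a|<1/k\}$ now yields $|\Phi(a)|\leq M k |a|$; evaluating at $a=1$ and using $k\leq \Td(\pi(h),\pi(g))+\gep$ gives
\[|\ga(\ell_h-\ell_g)|=|\im \Phi(1)|\leq |\Phi(1)|\leq M\bigl(\Td(\pi(h),\pi(g))+\gep\bigr),\]
and letting $\gep\to 0$ proves the lemma with $K_5=M$.

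The principal obstacle is the a priori bound on $|\Phi(a)|$ over the full disk: the Teichm\"uller distance from $\pi(f_a)$ to $\pi(g)$ diverges as $|a|\to 1/k$, so one cannot invoke compactness in the Teichm\"uller topology directly. What rescues the argument is that each $f_a$ still belongs to $\IS_\ga$, which is precompact in the compact-open topology, and that the identities \eqref{E:C(f)} and \eqref{E:C(f)-expanded} reduce the bound on $\ga\ell_{f_a}$ to known uniform bounds on $\gU$ on $\QIS$. Making this reduction circularity-free requires arranging the order of proofs so that Proposition~\ref{P:Uniformly-bounded} is available at this point.
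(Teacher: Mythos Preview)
Your Lehto--Schwarz strategy is a natural idea, but the argument has a genuine gap at the uniform bound $|\Phi(a)|\le M$ on the full disk $\{|a|<1/k\}$, which you correctly flag as the principal obstacle. Your proposed resolution routes through $\Upsilon$ via Proposition~\ref{P:Uniformly-bounded} and the identity~\eqref{E:C(f)-expanded}, but this does not work as stated. First, $C(f)$, $\Upsilon(f)$ and the derivation of~\eqref{E:C(f)-expanded} all require $f$ to be linearizable, i.e.\ $\alpha$ Brjuno, whereas the lemma is stated for every $\alpha\in(0,\alpha_*]$; since $\ell_f$ makes sense regardless of linearizability, your argument would at best cover a dense set of rotation numbers and would then need a separate continuity step. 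Second, the reordering you suggest does not actually bypass the paper's machinery: the proof of Proposition~\ref{P:Uniformly-bounded} in Section~\ref{SS:size-of-Siegels} explicitly invokes Sublemma~\ref{SL:log-bound}, which is established \emph{inside} the proof of the present lemma and already yields the uniform estimate $|\ell_f|\le K_{13}(1-\log\alpha)$ (hence $|\alpha\ell_f|$ bounded) directly. So to make Proposition~\ref{P:Uniformly-bounded} available you must first prove Sublemma~\ref{SL:log-bound}, and once you have it the bound on $\Phi$ follows immediately without passing through~$\Upsilon$ at all.

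With that correction---taking Sublemma~\ref{SL:log-bound} as the source of the a~priori bound---your Schwarz-lemma argument does go through and genuinely replaces the paper's second half (the pointwise estimate on $|F_f-F_g|$ in Lemma~\ref{L:vertical-estimate-lift} and the Green's-formula extraction of $\lim(\Omega(\xi)-\xi)$). What it does not avoid is the construction of the model map $H_f$, of $G_f=L_f^{-1}\circ H_f$, and the Green's-formula computation on the strip that constitute Sublemma~\ref{SL:log-bound}. A smaller point: your sketch of the existence and holomorphic dependence of $\tilde\ell_{f_a}$ needs more care, since the Abel relation $L_f(\xi+1)=F_f(L_f(\xi))$ governs horizontal increments, not the vertical limit $\im\xi\to+\infty$; the paper in fact obtains existence of this limit through Sublemma~\ref{SL:log-bound} as well.
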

\begin{lem}\label{L:horizontal-direction}
There exists a constant $K_6$ such that 
\begin{itemize}
\item for all $f,h\in \IS_{(0,\ga_*]}$, where $f=h(e^{2\pi (\ga(f)-\ga(h))\B{i}}\cdot z)$, we have 
\[|\ga(f)\ell_f-\ga(h)\ell_h|\leq K_6d_{\log}(\ga(f),\ga(h));\]
\item for all $\ga,\gb \in (0,\ga_*]$ we have 
\[|\ga\ell_{Q_\ga}-\gb\ell_{Q_\gb}|\leq K_6d_{\log}(\ga,\gb).\]
\end{itemize}
\end{lem}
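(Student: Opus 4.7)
The plan is to parametrize each family by $\ga$ and estimate the derivative $\tfrac{d}{d\ga}[\ga\ell_{f_\ga}]$. A bound of the form $\bigl|\tfrac{d}{d\ga}(\ga\ell_{f_\ga})\bigr|\leq K(1+|\log\ga|)$, valid uniformly along the family, integrates over the interval $[\ga(f),\ga(h)]$ to a constant multiple of $d_{\log}(\ga(f),\ga(h))$, since $|\log t|\,dt$ is precisely the density defining the logarithmic metric. Both statements of the lemma then follow at once.

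For the first statement, set $f_\ga(z):=f_0(e^{2\pi\ga\B{i}}z)$ with $f_0=\pi(f)=\pi(h)\in\IS_0$; for the second, take $f_\ga:=Q_\ga$. Along either one-parameter family, each of the objects $\gs_{f_\ga}$, $\gt_{f_\ga}$, $\gF_{f_\ga}$, $F_{f_\ga}$, $L_{f_\ga}$ depends real-analytically on $\ga\in(0,\ga_*]$, with uniform control on compact subsets of their domains coming from the forthcoming \refL{L:basic-estimates-lift}. Consequently $\ga\mapsto\ga\ell_{f_\ga}$ is smooth on $(0,\ga_*]$, and it suffices to bound its derivative.

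Combining $\gt_{f_\ga}\circ L_{f_\ga}=\gF_{f_\ga}^{-1}$ with the explicit $\gt_{f_\ga}(w)=\gs_{f_\ga}/(1-e^{-2\pi\ga\B{i} w})$ and analyzing asymptotics as $\im\gx\to+\infty$, one obtains a closed-form expression for $2\pi\ga\ell_{f_\ga}$ in terms of $\log|\gs_{f_\ga}|$ and the normalization constant of $L_{f_\ga}$ at $+\B{i}\infty$. By the Inou-Shishikura expansion $\gs_{f_\ga}=-4\pi\ga\B{i}/f_0''(0)+O(\ga^2)$, the contribution $\log|\gs_{f_\ga}|$ produces a $1/\ga$ singularity upon $\ga$-differentiation; a matching singularity with opposite sign must arise from the $L_{f_\ga}$-contribution, because $\ga\ell_{f_\ga}$ itself stays bounded as $\ga\to 0^+$ by \refP{prop:samelim} combined with \eqref{E:C(f)-expanded}. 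After this cancellation the residual derivative is bounded by $K(1+|\log\ga|)$: the $|\log\ga|$ enters from differentiating subleading terms in $\log|\gs_{f_\ga}|$, while smooth $\ga$-variations of $L_{f_\ga}$ on a fixed compact subset contribute only bounded amounts.

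The main obstacle is making the cancellation of the $1/\ga$ singularities precise: one has to track carefully the branch of $\log$ used when inverting $\gt_{f_\ga}$ as well as the normalization of $L_{f_\ga}$, both of which vary continuously with $\ga$. The quadratic case is entirely parallel but easier, since $\gs_{Q_\ga}$ and the lift $F_{Q_\ga}$ are given by elementary formulae, making both the cancellation and the residual $|\log\ga|$ bound transparent without recourse to any compactness argument.
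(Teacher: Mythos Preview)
Your overall architecture is the same as the paper's: parametrize by $\ga$, show
\[
\Big|\frac{d}{d\ga}(\ga\ell_{f_\ga})\Big|\leq K(1-\log\ga),
\]
and integrate against the density $|\log t|\,dt$. The paper also splits this as $\frac{d}{d\ga}(\ga\ell_\ga)=\ell_\ga+\ga\frac{d\ell_\ga}{d\ga}$, using the already established bound $|\ell_\ga|\leq K_{13}(1-\log\ga)$ from \eqref{E:log-translation-main} for the first piece, and then proving the key estimate
\[
\Big|\ga\,\frac{d\ell_\ga}{d\ga}\Big|\leq A_1
\]
for the second. This last inequality is the substance of the lemma, and it is precisely where your proposal has a genuine gap.

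Your argument for it is: write $\ga\ell_{f_\ga}$ in terms of $\log|\gs_{f_\ga}|$ and the normalization of $L_{f_\ga}$ at $+\B{i}\infty$; the $\log|\gs_{f_\ga}|$ part contributes a $1/\ga$ term to the derivative; a matching $1/\ga$ with opposite sign \emph{must} come from the $L_{f_\ga}$ part because $\ga\ell_{f_\ga}$ itself stays bounded. This is not a valid inference. Boundedness of a function of $\ga$ (even existence of a finite limit at $0$) does not imply any bound on its derivative; one cannot deduce that two individually divergent terms cancel to a specific order just from the fact that their sum is bounded. You still have to estimate $\tfrac{d}{d\ga}\bigl[\lim_{\im\gx\to\infty}(L_{f_\ga}(\gx)-\gx)\bigr]$ directly, and this is not a value on a fixed compact set: it is an asymptotic translation constant at $+\B{i}\infty$, so the remark that ``smooth $\ga$-variations of $L_{f_\ga}$ on a fixed compact subset contribute only bounded amounts'' does not touch it.

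What the paper does instead is compare $L_\ga$ and $L_\gb$ through the quasi-conformal model maps $H_\ga$, $G_\ga=L_\ga^{-1}\circ H_\ga$ already built in the proof of \refL{L:vertical-direction}, sets $\gO=G_\gb\circ G_\ga^{-1}$, bounds its complex dilatation by a constant times $|\gb-\ga|$ using \refL{L:horizontal-estimate-lift}, and then extracts the difference of asymptotic translation constants via a Green's formula with the kernel $(\tan(\pi\gx)+\B{i})/(2\tan(\pi\gx))$. This integral representation is what converts the pointwise dilatation estimate into the bound $|\ell_\ga-\ell_\gb|\leq C|\ga-\gb|(1-\log\ga)$, i.e.\ $|\ga\,d\ell_\ga/d\ga|\leq A_1$. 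Your proposal contains no analogue of this step, and without it the lemma is not proved.
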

Before we embark on proving the above two lemmas, we need some basic estimates on $F_f$, as well as  
its dependence on the non-linearity of $f$ and on $\ga(f)$.

\subsection{The lift $F_f$ and its dependence on $\pi(f)$ and $\ga(f)$}\label{SS:the-lift}
Following \cite{Sh98}, the map $f\in \QIS\cup \IS_0\cup \{Q_0\}$ can be written as 
\begin{equation}\label{E:f(z)}
f(z)=z+z(z-\gs_f)u_f(z),
\end{equation} 
where $u_f$ is a holomorphic function defined on $\Dom f$, and is non-vanishing at zero and $\gs_f$.
By the pre-compactness of $\IS_0$, $u_f(0)$ is uniformly bounded from above and away from zero. 
More precisely, when $\ga=0$ one uses the bounds in Equation~\eqref{E:bound-on-f''}, and when $\ga\neq 0$
one uses the asymptotic expansion $\gs_f=-4\pi \ga \B{i}/f''(0)+o(\ga)$, where $o$ is uniform with respect to $f$.
Differentiating the above equation at zero and $\gs_f$, we obtain 
\begin{equation}\label{E:sigma-expression}
\gs_f=(1-e^{2\pi\ga(f)\B{i}})/u_f(0), f'(\gs_f)=1+\gs_f u_f(\gs_f).
\end{equation}
Recall the covering map $\gt_f(w)=\gs_f/ (1-e^{-2\pi\ga \B{i}w})$, for $f\in \QIS$.  
The map $f$ on a neighborhood of $0$ may be lifted via $\gt_f$ to a map $F_f$ defined on a domain containing 
points $w$ with large $\im w$.
The lift  $F_f$ is determined up to translation by $\D{Z}/\ga(f)$, and with any such choice, commutes with the 
translation by $1/\ga(f)$. 
One can see that $F_f$ is given by the formula
\begin{equation}\label{E:lift-formula} 
F_f(w)=w+\frac{1}{2\pi \ga(f)\B{i}}\log \big(1-\frac{\gs_f u_f(z)}{1+z u_f(z)}\big), \twith z=\gt_f(w).
\end{equation}
We shall determine a domain on which $F_f$ can be defined in Lemma~\ref{L:basic-estimates-lift}.
In particular, we pick the branch of $\log$ with $\im \log (\cdot)\ci (-\pi , +\pi)$ so that $F_f(w)\sim w+1$ as 
$\im w \to +\infty$.

\begin{lem}\label{L:estimate-on-covering}
There exists a constant $K_8$ such that for every $f \in \QIS$, we have 
\begin{gather*}
|\gt_f(w)|\leq K_8 \frac{\ga}{e^{2\pi\ga \im w}-1}, \tif \im w>0, \\
|\gt_f(w)-\gs_f| \leq K_8 \frac{\ga e^{2\pi\ga \im w}}{1-e^{2\pi \ga \im w}}, \tif \im w<0.
\end{gather*}
\end{lem}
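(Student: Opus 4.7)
\medskip

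The plan is to substitute the explicit formula $\gt_f(w)=\gs_f/(1-e^{-2\pi\ga\B{i}w})$ directly and estimate the denominator via the reverse triangle inequality. The key auxiliary fact is a linear bound $|\gs_f|\leq C\ga$ uniformly over $f\in\QIS$, which I would establish first.

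\textbf{Step 1 (bound on $|\gs_f|$).} For $f\in\IS_{(0,\ga_*]}$, use the asymptotic expansion $\gs_f=-4\pi\ga\B{i}/f''(0)+o(\ga)$ recalled in Section~\ref{SS:Inou-Shishikura}, together with the uniform bound $|f''(0)|\in[2,7]$ from \eqref{E:bound-on-f''}. Taking $\ga_*$ small enough (possibly shrinking it), both the leading term and the remainder give $|\gs_f|\leq C_1\ga$ for a uniform constant $C_1$. For $f=Q_\ga$, one has $\gs_\ga=(1-e^{2\pi\ga\B{i}})\cdot 16 e^{-4\pi\ga\B{i}}/27$, and the elementary estimate $|1-e^{2\pi\ga\B{i}}|\leq 2\pi\ga$ gives $|\gs_{Q_\ga}|\leq C_2\ga$. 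Setting $K_8=\max(C_1,C_2)$ handles both cases.

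\textbf{Step 2 (case $\im w>0$).} Since $|e^{-2\pi\ga\B{i}w}|=e^{2\pi\ga\im w}>1$, the reverse triangle inequality yields
\[
|1-e^{-2\pi\ga\B{i}w}|\geq |e^{-2\pi\ga\B{i}w}|-1=e^{2\pi\ga\im w}-1.
\]
Combining with Step 1 gives the first inequality.

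\textbf{Step 3 (case $\im w<0$).} Write
\[
\gt_f(w)-\gs_f=\gs_f\Big(\frac{1}{1-e^{-2\pi\ga\B{i}w}}-1\Big)=\gs_f\cdot\frac{e^{-2\pi\ga\B{i}w}}{1-e^{-2\pi\ga\B{i}w}}.
\]
Now $|e^{-2\pi\ga\B{i}w}|=e^{2\pi\ga\im w}<1$, so the reverse triangle inequality gives $|1-e^{-2\pi\ga\B{i}w}|\geq 1-e^{2\pi\ga\im w}$. Applying Step~1 produces the second inequality.

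\textbf{Expected obstacles.} The argument is essentially a direct computation, so there is no real difficulty; the only thing to be careful about is the uniformity of the bound $|\gs_f|\lesssim\ga$. In the Inou--Shishikura case this requires that the ``$o(\ga)$'' term in the expansion of $\gs_f$ is uniform in $f\in\IS_0$, which follows from the pre-compactness of $\IS_0$ in the compact-open topology and the continuous dependence of $\gs_h$ on $h$ recalled in Section~\ref{SS:Inou-Shishikura}. This uniformity is what may force a (harmless) further shrinking of $\ga_*$ in the definition of $\QIS$.
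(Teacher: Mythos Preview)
Your proof is correct and follows essentially the same route as the paper: bound $|\gs_f|$ by a constant times $\ga$, then apply the reverse triangle inequality to the denominator of $\gt_f$. The only minor difference is that the paper obtains the bound on $|\gs_f|$ directly from the exact formula $\gs_f=(1-e^{2\pi\ga\B{i}})/u_f(0)$ in \eqref{E:sigma-expression} together with the uniform lower bound on $|u_f(0)|$, which spares you the discussion of the uniformity of the $o(\ga)$ term and any shrinking of $\ga_*$.
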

\begin{proof}
Since $u_f(0)$ is uniformly away from zero, the inequalities  follow from the expression \eqref{E:sigma-expression} 
for $\gs_f$ and an explicit calculation on $\gt_f$. 
\end{proof}

Recall that the class $\QIS$ depends on $\ga_*$.

\begin{lem}\label{L:basic-estimates-lift}
There are constants $\ga_*$ and $K_9$ such that for all $f$ in $\QIS$, there exists a lift 
$F_f$ defined on $\D{C}\setminus B(\D{Z}/\ga,K_9)$ that satisfies the following estimates.
\begin{itemize}
\item[1)] For all $w\in \CC\setminus B(\D{Z}/\ga,K_9)$, we have 
\[|F_f(w)-(w+1)|\leq 1/4, \tand  |F_f'(w)-1|\leq 1/4.\]
\item[2)] For all $w\in \CC\setminus B(\D{Z}/\ga, K_9)$ with $\im w>0$
\[|F_f(w)-(w+1)|\leq K_9|\gt_f(w)| , \tand 
 |F_f'(w)-1|\leq K_9 |\gt_f(w)|.\]
 \item[3)] For all $w\in \CC\setminus B(\D{Z}/\ga, K_9)$ with 
 $\im w<0$
\begin{gather*}
|F_f(w)-w+\frac{1}{2\pi\ga(f) \textnormal{\B{i}}}\log f'(\gs_f) |\leq K_9 |\gt_f(w)-\gs_f|, \\
|F_f'(w)-1|\leq K_9 |\gt_f(w)-\gs_f|.
 \end{gather*}
\end{itemize}
\end{lem}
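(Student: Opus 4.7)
The plan is to represent the lift as $F_f(w) = w + E_f(\gt_f(w))$, where $E_f(z) := \frac{1}{2\pi\ga\B{i}}\log g_f(z)$, $g_f(z) := 1 - \gs_f u_f(z)/(1 + zu_f(z))$, with the branch of $\log$ fixed after \eqref{E:lift-formula}. All three items then reduce to estimates on $E_f$, $\gt_f$ and $\gt_f'$.

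First I would evaluate $g_f$ at the two ``ends''. The identity $\gs_f u_f(0) = 1 - e^{2\pi\ga\B{i}}$ from \eqref{E:sigma-expression} gives $g_f(0) = e^{2\pi\ga\B{i}}$, hence $E_f(0) = 1$; and $1 + \gs_f u_f(\gs_f) = f'(\gs_f)$ gives $g_f(\gs_f) = 1/f'(\gs_f)$, hence $E_f(\gs_f) = -\log f'(\gs_f)/(2\pi\ga\B{i})$. These are exactly the asymptotic shifts appearing in items 2 and 3. The crucial observation is that every term of $g_f'(z)$ carries a factor of $\gs_f$ and $|\gs_f| = O(\ga)$ by precompactness of $\IS_0$ and the asymptotic from Section~\ref{SS:Inou-Shishikura}; so $|g_f'| = O(\ga)$ on a fixed compact $K \subset V_f$ containing $\{0, \gs_f\}$, and the factor $\ga$ cancels in $E_f'(z) = g_f'(z)/(2\pi\ga\B{i}\, g_f(z))$, yielding $|E_f'| \leq C$ on $K$ independently of $f, \ga$. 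Integrating along straight segments then gives $|E_f(z) - 1| \leq C|z|$ near $0$ and $|E_f(z) - E_f(\gs_f)| \leq C|z - \gs_f|$ near $\gs_f$.

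Next I would differentiate \eqref{E:covering} to get $\gt_f'(w) = -(2\pi\ga\B{i}/\gs_f)\,\gt_f(w)(\gt_f(w) - \gs_f)$; once more $\ga$ cancels with $|\gs_f| \asymp \ga$, so $|\gt_f'(w)| \leq C|\gt_f(w)||\gt_f(w) - \gs_f|$. Items 2 and 3 follow immediately: for $\im w > 0$, $|F_f(w) - (w+1)| = |E_f(\gt_f(w)) - 1| \leq C|\gt_f(w)|$ and $|F_f'(w) - 1| = |E_f'(\gt_f(w))\gt_f'(w)| \leq C|\gt_f(w)|$, using the trivial bound $|\gt_f - \gs_f| \leq |\gt_f| + |\gs_f|$ and boundedness on $K$; the case $\im w < 0$ is symmetric around $\gs_f$. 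For item 1 the $1/4$ bound follows by combining items 2 and 3 with the observation that $E_f(\gs_f) = 1 + O(\ga)$, obtained by Taylor-expanding $\log f'(\gs_f) = \log(1 + \gs_f u_f(\gs_f)) = -2\pi\ga\B{i} + O(\ga^2)$; then $\ga_*$ is chosen small enough to push everything below $1/4$.

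The main obstacle will be the domain issue: for $w \in \CC \setminus B(\Z/\ga, K_9)$, the image $\gt_f(w)$ must actually lie in the compact $K \subset V_f$ where the estimates above apply. This splits into (a) establishing $|\gt_f(w)| = O(1/K_9)$ in the near-real-axis regime where Lemma~\ref{L:estimate-on-covering} degenerates, from the lower bound $|1 - e^{-2\pi\ga\B{i}w}| \gtrsim \ga K_9$ valid for $w$ at distance $\geq K_9$ from $\Z/\ga$; and (b) producing a uniform-in-$f$ lower bound on $\mod(V_f \setminus K)$, which follows from Koebe's theorem applied to $\gf_f$ with the normalization $\gf_f(0) = 0$, $\gf_f'(0) = 1$ and precompactness of $\IS_0$. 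The quadratic case $f = Q_\ga$ is simpler because $\Dom Q_\ga = \CC$ and \eqref{E:lift-formula} can be handled directly.
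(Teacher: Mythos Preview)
Your approach is correct and is essentially the same as the paper's: both exploit the explicit formula \eqref{E:lift-formula}, the identity $\gs_f u_f(0)=1-e^{2\pi\ga\B{i}}$, and the fact that every derivative carries a factor $\gs_f\asymp\ga$ which cancels the $1/\ga$ in front of the logarithm. The differences are purely organizational. The paper proves item~1 first by reducing $|F_f(w)-(w+1)|$ to $C'|1-u_f(z)/[(1+zu_f(z))u_f(0)]|$ and invoking pre\-compactness, then sharpens this to $C'|z|$ for item~2 and does an analogous mean-value computation near $\gs_f$ for item~3; the derivative bounds are obtained from the function bounds via Cauchy's integral formula (at the cost of enlarging $K_9$ by~1). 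You instead package everything as $E_f(z)=\frac{1}{2\pi\ga\B{i}}\log g_f(z)$, bound $E_f'$ once, integrate to get items~2 and~3, and then deduce item~1 from them together with $E_f(\gs_f)=1+O(\ga)$; your derivative bounds come directly from the chain rule and the explicit factorization $\gt_f'(w)=-(2\pi\ga\B{i}/\gs_f)\gt_f(w)(\gt_f(w)-\gs_f)$. Both routes are equivalent in substance, and your ``domain obstacle'' (a)--(b) matches the paper's opening paragraph on choosing $\ga_*$ and $K_9$ so that $\gt_f(w)$ lands in a fixed compact neighborhood of $\{0,\gs_f\}$ inside $\Dom f$.
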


\begin{lem}\label{L:vertical-estimate-lift}
There exists a constant $K_{10}$ such that for all $f,g$ in $\IS_{(0,\ga_*]}$ with $\ga(f)=\ga(g)$, 
the lifts $F_f$ and $F_g$ satisfy the following estimates.
\begin{itemize}
\item[1)]$\forall w\in \D{C}\setminus B(\D{Z}/\ga, K_9)$, with $\im w>0$ 
\[ |F_f(w)-F_g(w)|\leq K_{10} \Td(\pi(f),\pi(g)) |\gt_f(w)|.\]
\item[2)]$\forall w\in \D{C}\setminus B(\D{Z}/\ga, K_9)$, with $\im w<0$ 
\[ \big | F_f(w)-F_g(w) + \frac{1}{2\pi\ga(f) \textnormal{\B{i}}} (\log f'(\gs_f)-\log g'(\gs_g))\big |\leq K_{10} 
\Td(\pi(f),\pi(g)) |\gt_f(w)-\gs_f|.\]
\end{itemize}
\end{lem}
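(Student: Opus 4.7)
The strategy is a Lehto majorant argument, analogous to the one used in Lemma~\ref{L:metrics-related}, combined with the uniform estimates on the lift from Lemma~\ref{L:basic-estimates-lift}. The plan is to embed $f$ and $g$ into a holomorphic family $\{\tilde f^{a}\}_{a\in B(0,R)}\ci \IS_\ga$, where $R=1/\|\gm_0\|_\infty>1$ is controlled by $\Td(\pi(f),\pi(g))$, and apply Schwarz's lemma to suitably normalized quotients of the associated lifts.

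First I would choose quasi-conformal extensions $\hat\gf_f,\hat\gf_g\colon \CC\to \CC$ whose $\log \Dil(\hat\gf_g\circ \hat\gf_f^{-1})$ is arbitrarily close to $\Td(\pi(f),\pi(g))$, and let $\gm_0$ denote the Beltrami coefficient of $\hat\gf_g\circ \hat\gf_f^{-1}$; it vanishes on $V_f$. For $a\in B(0,1/\|\gm_0\|_\infty)$, let $G_a$ be the normalized solution of $\partial_{\bar z}G_a=a\gm_0\partial_z G_a$ with $G_a(0)=0$ and $G_a'(0)=1$, which is holomorphic on $V_f$ and depends holomorphically on $a$. Set $\gf^a:=G_a\circ\gf_f$, $f^a:=P\circ(\gf^a)^{-1}\in \IS_0$, and $\tilde f^a(z):=f^a(\ea z)\in \IS_\ga$, so that $\tilde f^0=f$, $\tilde f^1=g$ on their common domain, and $a\mapsto \tilde f^a$ is holomorphic on $B(0,R)$ with $R=1/\|\gm_0\|_\infty>1$. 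The implicit function theorem gives holomorphic dependence of $\gs_{\tilde f^a}$ on $a$, and the asymptotic $\gs=-4\pi\ga \B{i}/f''(0)+o(\ga)$ combined with \eqref{E:bound-on-f''} shows $|\gs_{\tilde f^a}|/|\gs_f|$ stays uniformly bounded above and away from zero on $B(0,R)$. Using \eqref{E:lift-formula} with $z=\gt_{\tilde f^a}(w)$ one sees that, for each fixed $w\in \CC\sm B(\D{Z}/\ga,K_9)$, the function $a\mapsto F_{\tilde f^a}(w)$ is holomorphic on $B(0,R)$; the branch of the logarithm is selected uniformly because, by Lemma~\ref{L:basic-estimates-lift}(1), the argument of the logarithm stays close to $e^{2\pi\ga \B{i}}$ throughout the family.

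For $\im w>0$, I would consider the holomorphic function
\begin{equation*}
\gz_+(a):=\frac{F_{\tilde f^a}(w)-F_f(w)}{\gt_f(w)},
\end{equation*}
which satisfies $\gz_+(0)=0$. By Lemma~\ref{L:basic-estimates-lift}(2) and the identity $|\gt_{\tilde f^a}(w)|/|\gt_f(w)|=|\gs_{\tilde f^a}|/|\gs_f|$, one has $|\gz_+(a)|\leq K_9(|\gs_{\tilde f^a}|/|\gs_f|+1)\leq M$ uniformly on $B(0,R)$. Schwarz's lemma then gives $|\gz_+(1)|\leq M/R=M\|\gm_0\|_\infty$, and taking the infimum over extensions together with the inequality $\|\gm_0\|_\infty=(\Dil-1)/(\Dil+1)\leq \log \Dil$ yields the first estimate of the lemma when $\Td(\pi(f),\pi(g))$ is bounded; for large $\Td$ it follows directly from Lemma~\ref{L:basic-estimates-lift}(2) applied to $f$ and $g$ separately. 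The second estimate ($\im w<0$) is obtained by the same Schwarz argument applied to
\begin{equation*}
\gz_-(a):=\frac{\bigl(F_{\tilde f^a}(w)+\tfrac{1}{2\pi\ga \B{i}}\log(\tilde f^a)'(\gs_{\tilde f^a})\bigr)-\bigl(F_f(w)+\tfrac{1}{2\pi\ga \B{i}}\log f'(\gs_f)\bigr)}{\gt_f(w)-\gs_f},
\end{equation*}
using Lemma~\ref{L:basic-estimates-lift}(3) and $|\gt_{\tilde f^a}(w)-\gs_{\tilde f^a}|/|\gt_f(w)-\gs_f|=|\gs_{\tilde f^a}|/|\gs_f|$.

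The main technical obstacle lies in verifying the uniform boundedness of $\gz_\pm(a)$ over the full disk $B(0,R)$, which may be large when $\Td(\pi(f),\pi(g))$ is small. This reduces to the holomorphic dependence and uniform bounds of $\gs_{\tilde f^a}$, $(\tilde f^a)'(\gs_{\tilde f^a})$, and the branch of logarithm in \eqref{E:lift-formula}; all of these follow from the inclusion $\{\tilde f^a\}\ci \IS_\ga\ci \QIS$, the uniform constants of Lemma~\ref{L:basic-estimates-lift}, and the precompactness of $\IS_\R$ in the compact-open topology together with the asymptotic expansion of $\gs$ near $\ga=0$.
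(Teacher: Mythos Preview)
Your approach is correct and is a clean variant of the paper's argument. Both proofs rest on the Lehto majorant principle (holomorphic family plus uniform bound plus Schwarz), but they apply it at different levels. The paper first observes that the individual functionals $h\mapsto u_h(z)$, $h\mapsto \gs_h$, $h\mapsto u_h(\gs_h)$ are Lipschitz in $\Td$ by Lehto, and then inserts these into the explicit formula~\eqref{E:lift-formula}, carrying out an algebraic chain of triangle inequalities (splitting $|F_f(w)-F_g(w)|$ into a term with the same base point $z=\gt_f(w)$ and a term comparing $z$ to $z'=\gt_g(w)$, and using $|z-z'|/|z|=|1-u_f(0)/u_g(0)|$). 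You instead apply Schwarz directly to the single holomorphic function $a\mapsto \gz_\pm(a)$ built from the lift, the uniform bound coming wholesale from Lemma~\ref{L:basic-estimates-lift} and the fact that the whole family $\tilde f^a$ sits in $\IS_\ga\subset\QIS$. Your route is shorter and avoids the algebra; the paper's route is more explicit and makes visible exactly which features of $u_h$ and $\gs_h$ are being used. The only point worth spelling out a bit more in your write-up is why $\tilde f^a\in\IS_\ga$ for \emph{all} $a\in B(0,R)$ (so that the constants $K_9$ and the bound on $|u_{\tilde f^a}(0)|$, hence on $|\gs_{\tilde f^a}|/|\gs_f|$, apply uniformly); this is immediate from the normalization $G_a(0)=0$, $G_a'(0)=1$ and the definition of $\IS_0$, but it is the crux of your uniformity claim.
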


\begin{lem}\label{L:horizontal-estimate-lift}
There exists a constant $K_{11}$ satisfying the following. 
Given $f$ in $\IS_0\u \{Q_0\}$ and $\ga\in (0,\ga_*]$, let $f_\ga(z):=f(e^{2\pi\ga \textnormal{\B{i}}}z)$
and $F_\ga:=F_{f_\ga}$. 
Then, 
\begin{itemize}
\item[1)]$\forall w \in \CC\setminus B(\D{Z}/\ga, K_9)$ with $\im w>0$, 
\[|\frac{d}{d\ga}F_\ga(w)|\leq K_{11} |\gt_{f_\ga}(w)|;\]
\item[2)]$\forall w \in \CC\setminus B(\D{Z}/\ga, K_9)$ with $\im w<0$,
\[|\frac{d}{d\ga}F_\ga(w)|\leq K_{11}.\]
\end{itemize}
\end{lem}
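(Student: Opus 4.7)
The plan is to differentiate the explicit formula \eqref{E:lift-formula} for $F_\ga(w)$ with respect to $\ga$, exploiting the structural identity $\partial_\ga f_\ga(z) = 2\pi\B{i} z\,\partial_z f_\ga(z)$ (immediate from $f_\ga(z) = f(e^{2\pi\ga\B{i}}z)$) together with the constraint $\gs_{f_\ga} u_{f_\ga}(0) = 1 - e^{2\pi\ga\B{i}}$ recorded in \eqref{E:sigma-expression}. These allow one to extend $f_\ga$, $\gs_{f_\ga}$, $u_{f_\ga}$, the covering $\gt_{f_\ga}$, and finally $F_\ga(w)$ itself to holomorphic functions of \emph{complex} $\ga$ in a neighborhood of the real interval $(0,\ga_*]$, with the branch of the logarithm fixed by the asymptotic normalization $F_\ga(w) \sim w+1$ as $\im w \to +\infty$. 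The argument then leverages this holomorphic dependence via Cauchy's integral formula.

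First, one verifies that the bounds of Lemma~\ref{L:basic-estimates-lift} persist for complex $\zeta$ in a small disk $D_{\ga_0}$ around each real $\ga_0 \in (0,\ga_*]$; this is a perturbation argument using the holomorphic dependence. The radius of $D_{\ga_0}$ is chosen so that $|\gt_{f_\zeta}(w)|$ (resp.\ $|\gt_{f_\zeta}(w) - \gs_{f_\zeta}|$) remains comparable to its value at $\zeta = \ga_0$ throughout $D_{\ga_0}$, uniformly for the allowed $w$. For part (1), one applies Cauchy's integral formula to $\zeta \mapsto F_\zeta(w) - (w+1)$ on $D_{\ga_0}$, which by Lemma~\ref{L:basic-estimates-lift}(2) is bounded there by a constant multiple of $|\gt_{f_{\ga_0}}(w)|$, to obtain the claimed bound $K_{11}|\gt_{f_\ga}(w)|$ on $\partial_\ga F_\ga(w)$ at $\ga = \ga_0$. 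For part (2), the analogous Cauchy estimate is applied to $\zeta \mapsto F_\zeta(w) - w + \log f_\zeta'(\gs_{f_\zeta})/(2\pi\zeta\B{i})$. Combined with the observation that $\log f_\ga'(\gs_{f_\ga})/(2\pi\ga\B{i})$ has uniformly bounded $\ga$-derivative (since $f_\ga'(\gs_{f_\ga}) = 1 + \gs_{f_\ga} u_{f_\ga}(\gs_{f_\ga}) = 1 + O(\ga)$ by \eqref{E:sigma-expression}), this yields the uniform bound $K_{11}$ claimed in (2).

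The main obstacle is calibrating the Cauchy radius so as to avoid extra factors of $1/\ga_0$ in the estimate. A naive direct differentiation of \eqref{E:lift-formula} produces singular contributions of size $O(|\gt_{f_\ga}(w)|/\ga)$ coming from $\partial_\ga(1/(2\pi\ga\B{i}))$ and from $\partial_\ga \gs_{f_\ga}/\gs_{f_\ga}$; these pieces individually exceed the target bound and must cancel against each other. The identity $\gs_{f_\ga} u_{f_\ga}(0) = 1 - e^{2\pi\ga\B{i}}$ is precisely what encodes this cancellation, and a careful bookkeeping of it is the technical heart of the proof. A clean Cauchy argument finesses this by packaging the cancellation into the boundedness statement of Lemma~\ref{L:basic-estimates-lift}, provided one can arrange the radii of the disks $D_{\ga_0}$ uniformly in $\ga_0$, which is itself a delicate point that requires exploiting the $1/\ga$-periodicity of $F_\ga$ in $w$ to restrict attention to a fundamental domain.
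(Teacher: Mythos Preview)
Your outline is a plausible alternative to the paper's route, but the step you yourself flag as ``delicate''---arranging the Cauchy disk $D_{\ga_0}$ with radius independent of $\ga_0$---is the whole crux, and your proposed fix does not close it. If the radius shrinks proportionally to $\ga_0$, Cauchy's estimate produces exactly the extra $1/\ga_0$ factor you are trying to avoid. If the radius is kept fixed, two things must be verified. First, $\zeta\mapsto F_\zeta(w)-(w+1)$ must extend holomorphically across $\zeta=0$; it does (both $\gs_\zeta$ and $1-e^{-2\pi\zeta\B{i}w}$ vanish to first order in $\zeta$, and the prefactor $1/(2\pi\zeta\B{i})$ is cancelled by the logarithm), but you do not say this. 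Second, and this is where the argument breaks, the bound $K_9|\gt_{f_\zeta}(w)|$ from Lemma~\ref{L:basic-estimates-lift}(2) must remain comparable to $|\gt_{f_{\ga_0}}(w)|$ as $\zeta$ runs over the circle. For large $\im w$ this fails: since $|\gt_{f_\zeta}(w)|\asymp |\gs_\zeta|\,e^{-2\pi(\re\zeta)\im w}$ there, a fixed perturbation of $\re\zeta$ away from $\ga_0$ alters $|\gt_{f_\zeta}(w)|$ by a factor exponential in $\im w$. The $1/\ga$-periodicity of $F_\ga$ in $w$ that you invoke constrains only $\re w$; it says nothing about the $\zeta$-radius or about the growth in $\im w$.

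The paper proceeds quite differently. For Part~1 it rewrites the formula so that the $1/\ga$ singularity disappears \emph{algebraically}:
\[
F_\ga(w)-w-1=\frac{1}{2\pi\B{i}}\,T(\ga,w)\,b\bigl(\ga T(\ga,w)\bigr),\qquad b(z)=\frac{\log(1+z)}{z},
\]
with $T(\ga,w)=\dfrac{1-e^{-2\pi\ga\B{i}}}{\ga}\cdot Q(\ga,z_\ga)$, where $Q$ is smooth in $(\ga,z)$ and $Q(\ga,0)=0$. The identity $\gs_\ga u_\ga(0)=1-e^{2\pi\ga\B{i}}$ is thus made manifest as the absorption of $1/(2\pi\ga\B{i})$ into the regular factor $(1-e^{-2\pi\ga\B{i}})/\ga$, and one then differentiates the right-hand side directly. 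For Part~2 the paper does \emph{not} differentiate in $\ga$ at all: it applies the maximum principle in the variable~$w$. The difference $w\mapsto F_\ga(w)-F_\gb(w)$ is bounded on the upper boundary $\partial B(\D{Z}/\ga,K_9)\cup\bigl(\R\setminus B(\D{Z}/\ga,K_9)\bigr)$ by the Part~1 estimate, and its limit as $\im w\to-\infty$ is computed explicitly to be $-\frac{1}{2\pi\ga\B{i}}\log f_\ga'(\gs_\ga)+\frac{1}{2\pi\gb\B{i}}\log f_\gb'(\gs_\gb)=O(|\ga-\gb|)$; the maximum principle then propagates the bound throughout the lower region. This mechanism is orthogonal to your Cauchy-in-$\ga$ scheme and is worth noting.
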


\begin{proof}[Proof of Lemmas~\ref{L:basic-estimates-lift}, \ref{L:vertical-estimate-lift}, and 
\ref{L:horizontal-estimate-lift}]\hfill

{\em \ref{L:basic-estimates-lift}-1:}
All the maps in $\IS_{\D{R}}$ are defined on a definite neighborhood of $0$.
Then, using \eqref{E:sigma-expression} and that $u_f(0)$ is uniformly away from zero, there is $\ga_*$ such that 
$\gs_f$ is contained in that neighborhood of zero, for every $f\in \QIS$.  
Now fix $\ga_*$ and choose $C$ such that $F_f$ is defined on $\D{C}\setminus B(\D{Z}/\ga,C)$, for all $f\in \QIS$. 
By making $C$ larger if necessary (independent of $\ga_*$), $z=\gt_f(w)$ can be made uniformly close to $0$ 
or $\gs_f$, for all $f\in \QIS$ and $w\in \D{C}\setminus B(\D{Z}/\ga,C)$. 
By the pre-compactness of the class $\IS$, this guarantees that $1-\frac{\gs_f u_f(z)}{1+z u_f(z)}$ is contained 
in $B(1, 0.9)\ci \D{C}$, and hence, is uniformly away from the negative real axis. 
Now, observe that
\begin{align*}
|F_f(w)-(w+1)|&=\Big |\frac{1}{2\pi \ga(f)\B{i}}\log \big(1-\frac{\gs_f u_f(z)}{1+z u_f(z)}\big)-1\Big |\\
&=\frac{1}{2\pi \ga(f)} \Big |\log \big((1-\frac{\gs_f u_f(z)}{1+z u_f(z)})-\log e^{2\pi \ga(f)\B{i}}\big) \Big| \\
&\leq \frac{C'}{2\pi \ga(f)}\Big |(1-\frac{\gs_f u_f(z)}{1+z u_f(z)})- e^{2\pi\ga(f)\B{i}} \Big|\\
& \leq C' \Big |1-\frac{u_f(z)}{(1+zu_f(z))u_f(0)}\Big|.
\end{align*}
The last expression above can be made less that $1/4$ for every $w\in \D{C}\setminus B(\D{Z}/\ga, C)$, 
by the pre-compactness of the class of maps $u_f$, provided $C$ is a large fixed constant. 
The second estimate follows from the first one using Cauchy's Integral Formula, and replacing $C$ by $C+1$. 
 
\medskip
 
{\em \ref{L:basic-estimates-lift}-2:} To prove this, we further estimate the last expression in the above estimates.
But, first note that on a fixed neighborhood of zero contained in $ \Dom f$, for all $f$, $u_f'$ is uniformly bounded above. 
Therefore, one can see that 
\begin{equation*}
 \Big |1-\frac{u_f(z)}{(1+zu_f(z))u_f(0)}\Big | \leq C' |z| =C' |\gt_f(w)|
\end{equation*} 
for $w\in \CC\setminus B(\D{Z}/\ga,C)$. 
The other estimate also follows from Cauchy Integral Formula.

\medskip

{\em \ref{L:basic-estimates-lift}-3:}
The fixed point $\gs_f$ depends continuously on $f$, as $f$ varies in $\QIS$. 
Hence, by making $\ga_*$ small enough, we may assume that $f'(\gs_f)$ is uniformly away from the negative 
real axis, say,  $|f'(\gs_f)-1|\leq 1/2$. 
This guarantees that $\log f'(\gs_f)$ is defined for our chosen branch of logarithm.  
With $z=\gt_f(w)$,  
\begin{align*}\label{E:lift-translation}
\Big | F_f(w)-w+\frac{1}{2\pi\ga(f) \B{i}}&\log f'(\gs_f)\Big | \\
&=\Big |\frac{1}{2\pi \ga(f)\B{i}}\big (\log \big (1-\frac{\gs_f u_f(z)}{1+z u_f(z)})- \log \frac{1}{f'(\gs_f)} \big) \Big |\\
& \leq \frac{C'}{2\pi \ga} \Big|(1-\frac{\gs_f u_f(z)}{1+z u_f(z)})- \frac{1}{f'(\gs_f)}\Big| \\
&\leq \frac{C'}{2\pi \ga}\sup_{t\in (0,1)}\Big|(\frac{1+(y-\gs_f)u_f(y)}{1+y u_f(y)})'\Big |_{y=t\gs_f+(1-t)z}\Big |
\cdot |z-\gs_f|\\
&\leq \frac{C' |\gs_f|}{2\pi \ga} \sup_{t\in (0,1)}\Big|(\frac{u_f(y)^2-u_f'(y)}{(1+y u_f(y))^2})\Big |_{y=t\gs_f+(1-t)z}\Big|
\cdot |z-\gs_f|\\
&\leq C'' \ga(f)  |z-\gs_f| 
\end{align*}
for some constants $C'$ and $C''$ depending only on $\QIS$. 
This implies the first inequality, and then the second one, using Cauchy Integral Formula.

\medskip

{\em \ref{L:vertical-estimate-lift}:}
For every fixed $z$ well-contained in the domains of all $f$, the analytic functionals $f\mapsto u_f(z)$, 
$f\mapsto \gs_f$, and $f\mapsto u_f(\gs_f)$ are uniformly bounded from above, by the 
pre-compactness of $\IS_0$. 
One can use the Lehto Majorant principle, or repeat the argument in the proof of Lemma~\ref{L:metrics-related}, 
to show that these functionals are Lipschitz with respect to the Teichm\"uller metric. 

For $\im w>0$, let $z=\gt_f(w)$ and $z'=\gt_g(w)$.
We have, 
\begin{align*}
|F_f(w)-F_g(w)| &= \frac{1}{2\pi \ga(f)}\Big |\log \big(1-\frac{\gs_f u_f(z)}{1+z u_f(z)}\big)-
\log \big(1-\frac{\gs_g u_g(z')}{1+z' u_g(z')}\big)\Big|\\
& \leq C' \frac{1}{2\pi \ga(f)}\Big |\frac{\gs_f u_f(z)}{1+z u_f(z)} - \frac{\gs_g u_g(z')}{1+z' u_g(z')}\Big | \\
&\leq C' \Big |\frac{u_f(z)}{(1+z u_f(z))u_f(0)}- \frac{u_g(z')}{(1+z' u_g(z'))u_g(0)}\Big|\\
&\leq C'\Big ( \Big |\frac{u_f(z)}{(1+z u_f(z))u_f(0)}- \frac{u_g(z)}{(1+z u_g(z))u_g(0)}\Big| \\
& \qq \qq \qq  + \Big |\frac{u_g(z)}{(1+z u_g(z))u_g(0)}- \frac{u_g(z')}{(1+z' u_g(z'))u_g(0)}\Big|\Big )\\
&\leq C'' |z| \Big(\Td(\pi(f),\pi(g))+\frac{1}{u_g(0)}\frac{|z-z'|}{|z|} \Big)\\
&\leq C'' |z| \Big(\Td(\pi(f),\pi(g))+\frac{1}{u_g(0)}|1-\frac{u_f(0)}{u_g(0)}| \Big)\\
&\leq C''' |\gt_f(w)|\Td(\pi(f),\pi(g)).
\end{align*}

At the lower part,  
\begin{align*}
\Big |F_f(w)-&F_g(w)+\frac{1}{2\pi\ga(f) \B{i}}\big (\log f'(\gs_f) -\log g'(\gs_g) \big)\Big |\\
&=\frac{1}{2\pi \ga(f)}\Big| \log (1-\frac{\gs_f u_f(z)}{1+z u_f(z)})+\log f'(\gs_f)  \\
&\qq \qq - \log (1-\frac{\gs_g u_g(z')}{1+z' u_g(z')})- \log g'(\gs_g) \Big |\\
&=\frac{1}{2\pi \ga(f)}\Big| \log (1-\frac{(1-e^{2\pi\ga \B{i}}) u_f(z)}{u_f(0)(1+z u_f(z))})
+\log (1+\frac{(1-e^{2\pi\ga \B{i}})u_f(\gs_f)}{u_f(0)})\\ 
&\qq \qq-\log (1-\frac{(1-e^{2\pi\ga \B{i}}) u_g(z')}{u_g(0)(1+z' u_g(z'))})-\log (1+\frac{(1-e^{2\pi\ga \B{i}})
u_g(\gs_g)}{u_g(0)})\Big|\\
&\; \;  \vdots \\
&\leq C^{(4)} |\gt_f(w)-\gs_f|\Td(\pi(f),\pi(g))
\end{align*}
with the missing steps similar to the third to sixth inequalities above, carried out near $\gs_f$ instead of $0$. 

\medskip

{\em \ref{L:horizontal-estimate-lift}:} 
Assume that $w \in\D{C}\setminus B(\D{Z}/\ga, K_9)$ with $\im w\geq 0$, 
or $w\in \partial B(\D{Z}/\ga, K_9)$. 
Let $z_\ga:=\gt_{f_\ga}(w)$ and $u_\ga:= u_{f_\ga}$. 
Substituting $\gs_f$ from \eqref{E:sigma-expression}, and by some basic algebra,
\begin{align*}
F_\ga(w)-w&=\frac{1}{2\pi\ga \B{i}} \log\big(1-\frac{\gs_\ga u_\ga(z_\ga)}{1+z_\ga u_\ga(z_\ga)}\big) \\
&=1+\frac{1}{2\pi\ga \B{i}}\log\big((1-\frac{\gs_\ga u_\ga(z_\ga)}{1+z_\ga u_\ga(z_\ga)})e^{-2\pi\ga\B{i}}\big)\\
&=1+\frac{1}{2\pi\ga \B{i}}\log\Big(1+(1-e^{-2\pi \ga\B{i}})
\big(\frac{u_\ga(z_\ga)-u_\ga(0)(1+z_\ga u_\ga(z_\ga))}{u_\ga(0)(1+z_\ga u_\ga(z_\ga))}\big)\Big).\\
\end{align*}

Let $b: \D{C}\setminus (-\infty, -1]\to \D{C}$ be a holomorphic function defined as $\log (1+z)=z b(z)$, and let
\[T(\ga, w)=\frac{1-e^{-2\pi \ga\B{i}}}{\ga}(\frac{u_\ga(z_\ga)-u_\ga(0)(1+z_\ga u_\ga(z_\ga))}{u_\ga(0) 
(1+z_\ga u_\ga(z_\ga))}).\]
Note that $|T(\ga, w)|=O(|z_\ga|)$ and $|T(\ga, w)/z_\ga-T(\gb,w)/z_\gb|= O(|\ga-\gb|)$. 
Then, for $\gb > \ga$ (without loss of generality), we have 
\begin{align*}
\lim_{\gb\to \ga} \frac{2\pi |F_\ga(w)-F_\gb(w)|}{|\ga-\gb|} &
=\lim_{\gb\to \ga} \frac{1}{|\ga-\gb|}\Big |T(\ga,w)b(\ga T(\ga,w))-T(\gb,w) b(\gb T(\gb,w))\Big|\\
& \leq|T(\ga,w)| \lim_{\gb\to \ga}\Big |\frac{b(\ga T(\ga,w))-b(\gb T(\gb,w))}{\ga-\gb}\Big| \\ 
& \qq +\lim_{\gb\to \ga} \Big |\frac{T(\ga,w)-T(\gb,w)}{\ga-\gb}\Big| |b(\ga T(\ga,w))|     \\
&\leq C'|z_\ga|. 
\end{align*}


To prove the estimate at the lower part we use the maximum principle. 
At the upper boundary $\partial B(\D{Z}/\ga, K_9)\u (\D{R}\setminus B(\D{Z}/\ga, K_9))$ we have the above inequality
where we know that $|z_\ga|$ is uniformly bounded above. 
Near the lower end, we may look at the limiting behavior of the difference as 
\begin{align*}
\lim_{\im w\to -\infty}&|F_\ga(w)- F_\gb(w)| \\
&=\Big|\frac{1}{2\pi\ga \B{i}}\log\big(1-\frac{\gs_\ga u_\ga(\gs_\ga)}{1+\gs_\ga u_\ga(\gs_\ga)}\big)-
\frac{1}{2\pi\gb \B{i}} \log\big(1-\frac{\gs_\gb u_\gb(\gs_\gb)}{1+\gs_\gb u_\gb(\gs_\gb)}\big)\Big|\\
&= \Big|\frac{1}{2\pi\ga \B{i}}\log (1+\gs_\ga u_\ga(\gs_\ga))-\frac{1}{2\pi\gb \B{i}}\log(1+\gs_\gb u_\gb(\gs_\gb))\Big|\\
&\leq C''|\ga-\gb|. 
\end{align*}  
\end{proof}

Like in \cite{Yoc95} and \cite{Sh98}, one uses the functional equation $L_f(\gz+1)=F_f\circ L_f(\gz)$ and the  estimate on $F_f$ in Lemma~\ref{L:basic-estimates-lift}-1 to extend $L_f^{-1}$ onto the set of $\gz\in \D{C}$ 
satisfying 
\[\arg (\gz-\sqrt{2}K_9)\in [\frac{-3\pi}{4}, \frac{3\pi}{4}]\mod 2\pi, 
\tand \arg (\gz-\frac{1}{\ga}+\sqrt{2}K_9)\in [\frac{\pi}{4}, \frac{7\pi}{4}]\mod 2\pi.\]   
It is a classic application of Koebe's distortion Theorem, and the inequality in the part one of   
Lemma~\ref{L:basic-estimates-lift} that $|(L_f^{-1})'|$ is uniformly bounded from above and away form zero
on the above set. 
One applies the theorem to the map $L_f$ on a disk of radius say $1+1/4$ about a point, and uses that a pair 
$\gz$ and $\gz+1$ is mapped to a pair $w$ and $F_f(w)$. 
At points where $L_f$ is not defined on such a ball, one may use the functional equation for $L_f$ to obtain bounds 
on $L_f'$ at those points. 
Now, as $\im \gz\to +\infty$ in a strictly smaller sector contained in the above set, one can apply the 
Koebe theorem on larger and larger disks, to conclude that indeed $L_f'(\gz)\to 1$.  
There are finer estimates known to hold, see \cite{Sh00,HDR08,Ch10-II}, but we do not need 
them here.
The uniform bound is frequently used in the sequel, so we present it as a lemma.

\begin{lem}\label{L:bound-on-L'}
There exists a constant $K_{12}$ such that for every $f\in \QIS$, and every 
$\gz$ with $\re(\gz)\in (1/2,1/\ga(f)-\B{k}-1/2)$ or $\re(L_f(\gz))\in (K_9,1/\ga(f)-K_9)$,
\[1/K_{12} \leq |L_f'(\gz)| \leq K_{12}.\]
Moreover, as $\im \gz\to +\infty$, $L_f'(\gz)\to 1$.
\end{lem}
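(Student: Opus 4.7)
The plan is to make rigorous the sketch given in the paragraph preceding the statement, following the standard Yoccoz-Shishikura strategy based on the functional equation $L_f(\gz+1)=F_f(L_f(\gz))$ combined with Koebe's distortion theorem.

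First, I extend $L_f^{-1}$ from its natural domain (the image of the vertical strip $\{0<\re\gz<1/\ga-\B{k}\}$ under $L_f$) to the larger set
\[
\gO_f:=\bigl\{w\in\D{C}\sm B(\D{Z}/\ga,K_9): \arg(w-\sqrt{2}K_9)\in[-3\pi/4,3\pi/4],\ \arg(w-1/\ga+\sqrt{2}K_9)\in[\pi/4,7\pi/4]\bigr\}.
\]
The extension is forced by the relation $L_f^{-1}(F_f(w))=L_f^{-1}(w)+1$. Lemma~\ref{L:basic-estimates-lift}-1 ensures that on $\D{C}\sm B(\D{Z}/\ga,K_9)$, the map $F_f$ is a univalent perturbation of $w\mapsto w+1$ with $|F_f(w)-(w+1)|\leq 1/4$. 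So iterating $F_f$ (or its inverse) finitely many times transports any point of $\gO_f$ into the original domain where $L_f^{-1}$ is already defined, and univalence of the extension follows from univalence of each $F_f$ on its domain plus the fact that the iterated orbits of distinct points cannot collide in $\gO_f$ (the perturbation bound $1/4$ prevents any two forward trajectories from merging before hitting a singular ball).

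Second, I establish the uniform two-sided bound on $|L_f'|$. Fix $\gz_0$ with $\re\gz_0\in(1/2,1/\ga-\B{k}-1/2)$, or more generally with $\re L_f(\gz_0)\in(K_9,1/\ga-K_9)$. Using the extension, $L_f$ is univalent on the disk $B(\gz_0,5/4)$ (in the second regime this is obtained by first mapping by a shift via the functional equation so as to land in the original strip). The pair $(\gz_0,\gz_0+1)$ is sent to the pair $(w_0,F_f(w_0))$, and Lemma~\ref{L:basic-estimates-lift}-1 gives $|F_f(w_0)-(w_0+1)|\leq 1/4$, hence $|L_f(\gz_0+1)-L_f(\gz_0)|\in[3/4,5/4]$. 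Applying Theorem~\ref{T:distortion-estimates} to the normalized map $\gz\mapsto (L_f(\gz_0+\gz)-L_f(\gz_0))/L_f'(\gz_0)$ on $B(0,5/4)$, the ratio $|L_f(\gz_0+1)-L_f(\gz_0)|/|L_f'(\gz_0)|$ is pinched between two universal constants, which yields the desired $1/K_{12}\leq|L_f'(\gz_0)|\leq K_{12}$.

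Third, for the asymptotic $L_f'(\gz)\to 1$ as $\im\gz\to+\infty$, I upgrade the argument by taking Koebe disks of radius $R\to\infty$. On such a disk, the bound $|F_f(w)-(w+1)|\leq K_9|\gt_f(w)|$ from Lemma~\ref{L:basic-estimates-lift}-2 together with the decay of $\gt_f$ in Lemma~\ref{L:estimate-on-covering} shows that $L_f$ is arbitrarily close to a translation on arbitrarily large disks; Koebe's distortion estimate on $B(\gz_0,R)$ then forces $|L_f'(\gz_0)|\to 1$.

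The main obstacle is step one: verifying that the iterative extension by $F_f^{\pm 1}$ is well-defined and univalent throughout the sector $\gO_f$, in particular that the approximate-shift dynamics of $F_f$ do not force any orbit to enter one of the singular balls $B(k/\ga,K_9)$ before reaching the base domain. This is handled exactly by the quantitative closeness $|F_f(w)-(w+1)|\leq 1/4$, which controls the drift of orbits transverse to the real direction and keeps them within $\gO_f$; the other steps are then straightforward applications of Koebe.
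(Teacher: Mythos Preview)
Your proposal is correct and follows essentially the same route as the paper: the paper's argument (given in the paragraph immediately preceding the lemma rather than in a separate proof environment) likewise extends $L_f^{-1}$ via the functional equation and the $1/4$-bound from Lemma~\ref{L:basic-estimates-lift}-1, applies Koebe on a disk of radius $5/4$ using that the pair $\gz,\gz+1$ maps to $w,F_f(w)$, and obtains $L_f'(\gz)\to 1$ by applying Koebe on disks of increasing radius at high imaginary part. Your write-up makes the boundary case and the decay input from Lemma~\ref{L:basic-estimates-lift}-2 more explicit, but the strategy is identical.
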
 

\subsection{Proof of the estimates on the asymptotic expansion}\label{SS:Proofs-estimates}
Here we shall show that Lemma~\ref{L:vertical-estimate-lift} implies Lemma~\ref{L:vertical-direction}, and 
Lemma~\ref{L:horizontal-estimate-lift} implies Lemma~\ref{L:horizontal-direction}.

\begin{proof}[Proof of Lemma \ref{L:vertical-direction}]
Within this proof all the constants $A_1, A_2, \dots$ depend only on the class $\QIS$.   

Given $f$, let $v'_f:=L_f(1)$, which is a lift of $-4/27$ under $\gt_f$ and a critical value of $F_f$.
By assuming $\ga_*\leq (2K_9+2)^{-1}$, and the pre-compactness of the class $\IS_0$, there exists a 
non-negative integer $\gi$ such that for all $f\in \QIS$ the vertical line through $v''_f:=F_f\co{\gi}(v'_f)$ is 
contained in $\CC \setminus B(\D{Z}/\ga, K_9)$. 
Again, by the pre-compactness of the class $\IS$, there is $\gep>0$ such that 
$f\co{\gi}(-4/27)\in \D{C}\setminus B(0,\gep)$, for all $f\in \QIS$. 
Then, we make $\ga_*$ small enough so that $\gs_f\in B(0,\gep)$, and 
$|(1-e^{2\pi \ga\B{i}})/(f\co{\gi}(-4/27)u_f(0))|\leq 0.9$, for all $f\in \QIS$.  
Now the Euclidean distance between any two such values can be estimated as follows. 

\begin{equation}\label{E:value-distance}
\begin{aligned}
|v''_g- v''_h|&= |\gt_g^{-1}(g\co{\gi}(-4/27))-\gt_h^{-1}(h\co{\gi}(-4/27))|\\
&\leq|\gt_g^{-1}(g\co{\gi}(-4/27))-\gt_g^{-1}(h\co{\gi}(-4/27))|\\
&\qq +|\gt_g^{-1}(h\co{\gi}(-4/27))-\gt_h^{-1}(h\co{\gi}(-4/27))|\\
&\leq \frac{1}{2\pi \ga}\Big |\log (1-\frac{1-e^{2\pi\ga\B{i}}}{g\co{\gi}(-4/27)u_g(0)})-
\log (1-\frac{1-e^{2\pi\ga\B{i}}}{h\co{\gi}(-4/27)u_g(0)})\Big|\\
&\qq+\frac{1}{2\pi \ga}\Big |\log (1-\frac{1-e^{2\pi\ga\B{i}}}{h\co{\gi}(-4/27)u_g(0)})-
\log(1-\frac{1-e^{2\pi\ga\B{i}}}{h\co{\gi}(-4/27)u_h(0)})\Big| \\
&\leq A_1 \Td(\pi(h), \pi(g)),
\end{aligned}
\end{equation}
for a constant $A_1$ independent of  $g,h$, and $\ga$.
In the above, $\log$ is the principal branch of logarithm. 

By virtue of Lemma~\ref{L:basic-estimates-lift}-1, the vertical line $v''_f+t\B{i}$, for $t\in \D{R}$, 
is mapped away from itself under $F_f$. 
Hence, using an standard procedure, we may define a homeomorphism $H_f:[0,1]\times \D{R}\to \CC$ as
\begin{align*}
H_f(s,t):=(1-s)(v''_f+t\B{i})+ sF_f(v''_f+t\B{i}).
\end{align*}
The asymptotic behavior of this map is  
\begin{equation}\label{E:H-asymptote}
H_f(s,t)\approx s+\B{i}t+v''_f, \tas t\to +\infty.
\end{equation}

With complex notation $\gz=s+\B{i}t$, the first partial derivatives of $H$ are given by
\begin{equation}\label{E:partials-of-H}
\begin{aligned}
\partial_{\gz}H_f(s,t)= \frac{1}{2}[F_f(v_f''+\B{i}t)-(v_f''+\B{i}t)+1+ s(F_f'(v_f''+\B{i}t)-1)],\\
\partial_{\bar{\gz}}H_f(s,t)=\frac{1}{2}[F_f(v_f''+\B{i}t)-(v_f''+\B{i}t)-1-s(F_f'(v_f''+\B{i}t)-1)].
\end{aligned}
\end{equation}
Using the estimates in Lemma~\ref{L:basic-estimates-lift}-1, and that $H$ is \textit{absolutely continuous on lines}, 
$H$ is indeed a quasi-conformal mapping.  
Moreover, the estimates in part $2$ and $3$ of the same lemma provide us the following inequalities
\begin{itemize}
\item if $\im H_f(\gz)>0$,  
\begin{equation}\label{E:model-estimates-up}
|\partial_{\bar{\gz}} H_f(\gz)|, |\partial_\gz H_f(\gz)-1| \leq K_9 |\gt_f(v_f''+t\B{i})|,
\end{equation}
\item if $\im H_f(\gz)<0$, 
\begin{equation}\label{E:model-estimates-down}
\begin{gathered}
|\partial_{\bar{\gz}} H_f(\gz)+\frac{1}{2}+\frac{1}{4\pi\ga\B{i}}\log f'(\gs_f)|\leq K_9 |\gt_f(v_f''+t\B{i})-\gs_f|\\
|\partial_\gz H_f(\gz)-\frac{1}{2}+\frac{1}{4\pi \ga\B{i}}\log f'(\gs_f)| \leq K_9 |\gt_f(v_f''+t\B{i})-\gs_f|.
\end{gathered}
\end{equation}
\end{itemize}
The complex dilatation of $H_f$ is given by
\[\gm_f(\gz):=\frac{\partial_{\bar{\gz}} H_f}{\partial_\gz H_f}(\gz).\]

Note that $L_f([0, 1/\ga-\B{k}]\times \D{R})$ does not necessarily contain $H_f([0,1]\times \D{R})$. 
However, by the remark before Lemma~\ref{L:bound-on-L'}, the map $L_f$ has univalent extension onto a larger 
domain such that its image contains $H_f([0,1]\times \D{R})$. 
Hence, we may consider the map  
\[G_f:=L_f^{-1} \circ H_f:[0,1]\times \D{R}\to \CC,\] 
using the uniquely extended $L_f^{-1}$. 

The differentials of $L_f$, $G_f$ and $H_f$ are contained in a compact subset of $\D{C}\setminus \{0\}$ that 
does not depend on $f$. 
Since $G_f(\gz+1)=G_f(\gz)+1$ on the boundary of the domain of $G_f$, we can extend this map to a 
quasi-conformal mapping of the complex plane using this relation. 
By the definition, $G_f$ maps $n\in \D{Z}$ to $\gi+n$, and its differential tends to the identity when 
$\im(z)$ tends to $+\infty$. 
This follows from the same statement for $H$ in \eqref{E:model-estimates-up} and for $L$ in 
Lemma~\ref{L:bound-on-L'}.
Note that the  complex dilatation of $G_f$ at $\gz$,  $\gm(G_f)(\gz)$, is equal to $\gm_f(\gz)$.
See Figure~\ref{F:comparing-Fatou-coordinates}.

\begin{figure}
\begin{center}
\begin{pspicture}(11,5)
\pspolygon[linewidth=.5pt](0,0)(11,0)(11,5)(0,5)

\psline[linewidth=.5pt]{->}(.5,3.4)(3.5,3.4)
\psline[linewidth=.5pt]{->}(1,3)(1,4.5)

\pscustom[linecolor=lightgray,linewidth=.5pt,fillstyle=solid,fillcolor=lightgray]{
    \psline[liftpen=1](1.5,3)(1.5,4.5)    
     \pscurve[liftpen=1](2,4.5)(1.95,3.8)(2.05,3.4)(2,3)}
     
\rput(1.5,4.7){\tiny $F_g$}     
 
 \psdot[dotsize=2pt](1.5,3.7)
 \rput(1.3,3.7){\tiny $v_g''$}     

\psline[linewidth=.5pt]{->}(8.5,3.4)(11,3.4)
\psline[linewidth=.5pt]{->}(9,3)(9,4.5)
\pscustom[linecolor=lightgray,linewidth=.5pt,fillstyle=solid,fillcolor=lightgray]{
    \psline[liftpen=1](9.5,3)(9.5,4.5)    
     \pscurve[liftpen=1](10,4.5)(9.95,3.8)(10.05,3.4)(10,3)}

\rput(9.5,4.7){\tiny $F_h$}     

 \psdot[dotsize=2pt](9.5,3.2)
 \rput(9.3,3.2){\tiny $v_h''$}     

\pspolygon[linecolor=lightgray,linewidth=.5pt,fillstyle=solid,fillcolor=lightgray](4,1)(4.5,1)(4.5,3.5)(4,3.5)
\psline[linewidth=.5pt]{<->}(4,1)(4,3.5)
\psline[linewidth=.5pt]{->}(3.8,2)(5.5,2)
\rput(3.9,2.15){\tiny $0$} 

\psline[linewidth=.5pt]{->}(3.7,2.4)(2.5,3)
\rput(3.1,2.9){\tiny $H_g$}          
\psline[linewidth=.5pt]{->}(3.7,1.8)(2.5,1)
\rput(3,1.7){\tiny $G_g$} 

\psline[linestyle=dashed,linewidth=.5pt]{->}(5.1,3)(6.8,3)
\rput(6,3.1){\tiny identity} 

 \psdot[dotsize=2pt](4,2)

\pspolygon[linecolor=lightgray,linewidth=.5pt,fillstyle=solid,fillcolor=lightgray](7.1,1)(7.6,1)(7.6,3.5)(7.1,3.5)
\psline[linewidth=.5pt]{<->}(7.1,1)(7.1,3.5)
\psline[linewidth=.5pt]{->}(6.7,2)(8.5,2)
\rput(7,2.15){\tiny $0$} 

\psline[linewidth=.5pt]{->}(7.7,2.4)(8.7,3)
\rput(8.1,2.9){\tiny $H_h$}          
\psline[linewidth=.5pt]{->}(7.7,1.8)(8.7,1)
\rput(8.1,1.7){\tiny $G_h$} 

 \psdot[dotsize=2pt](7.1,2)

\psline[linewidth=.5pt]{->}(.5,.4)(3.5,.4)
\psline[linewidth=.5pt]{->}(1,0)(1,1.7)

\pscustom[linecolor=lightgray,linewidth=.5pt,fillstyle=solid,fillcolor=lightgray]{
    \pscurve[liftpen=1](2,0)(2.05,.4)(1.95,.8)(2,1.5)
     \pscurve[liftpen=1](1.5,1.5)(1.45,.8)(1.55,.4)(1.5,0)}
     
\pscurve[linewidth=.5pt]{->}(2,1.7)(2,2)(2,2.8)
\rput(1.8,2.2){\tiny $L_g$}     

 \psdot[dotsize=2pt](1.55,.4)   
 
\psline[linestyle=dashed,linewidth=.5pt]{->}(4.8,.4)(7,.4)
\rput(6,.6){\tiny $\gO=G_h\circ \text{identity}\circ  G_g^{-1}$}

\psline[linewidth=.5pt]{->}(8.5,.4)(11,.4)
\psline[linewidth=.5pt]{->}(9,0)(9,1.7)
\pscustom[linecolor=lightgray,linewidth=.5pt,fillstyle=solid,fillcolor=lightgray]{
   \pscurve[liftpen=1](10,0)(10.05,.4)(9.95,.8)(10,1.5)
     \pscurve[liftpen=1](9.5,1.5)(9.45,.8)(9.55,.4)(9.5,0)}

\pscurve[linewidth=.5pt]{->}(10,1.7)(10,2)(10,2.8)
\rput(9.8,2.2){\tiny $L_h$}          

 \psdot[dotsize=2pt](9.55,.4)     
\end{pspicture}
\caption{The maps $H_g$ and $H_h$ are models for the Fatou coordinates $L_g$ and $L_h$, respectively. 
The map $L_g$ is compared to $L_h$ by studying $\gO=G_h\circ  G_g^{-1} $ using its complex dilatation. 
Besides, the complex dilatation of $\gO$ corresponds to the one for $H_h\circ H_g^{-1}$.}
\label{F:comparing-Fatou-coordinates}
\end{center}
\end{figure}
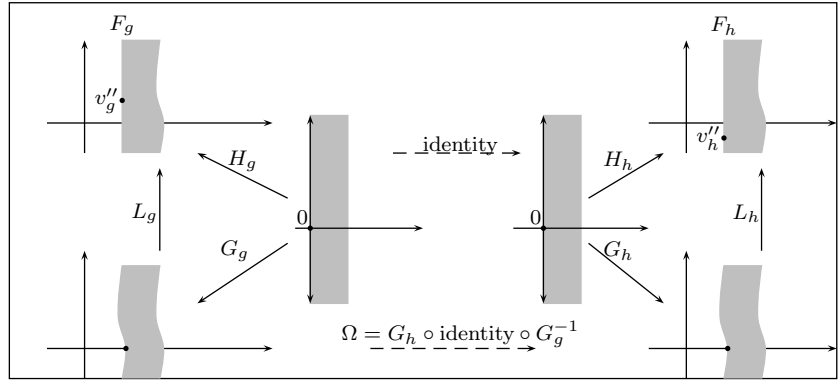
The asymptotic translation factor of every $G_f$ near $+\B{i}\infty$ is finite, with a 
bound given in the next statement.

\begin{sublem}\label{SL:log-bound}
There exists a constant $A_3$ such that for all $f\in \QIS$, and $\gz$ with $\im \gz>0$
\[| G_f(\gz)- \gz| \leq A_3(1-\log \ga(f)).\]
Moreover,  $\lim_{\im \gz\to +\infty} (G_f(\gz)- \gz)$
exists.
\end{sublem}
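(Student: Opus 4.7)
The plan is to bound $\phi_f(\gz):=G_f(\gz)-\gz$ by integrating $|\partial_t\phi_f|$ from the real axis upward, combining the uniform estimate of Lemma~\ref{L:basic-estimates-lift}-1 with the finer estimates in parts 2 and 3, together with Lemma~\ref{L:estimate-on-covering}.

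First, by the equivariance $G_f(\gz+1)=G_f(\gz)+1$, the function $\phi_f$ is $\D{Z}$-periodic and it suffices to work with $\re\gz\in[0,1]$. At $\gz=0$ we have $H_f(0,0)=v''_f$; combined with the identity $L_f^{-1}(v''_f)=\gi+1$ (which holds by the very definition $v''_f=F_f\co{\gi}(L_f(1))$, with $\gi$ bounded independently of $\ga$) and the uniform bound $|(L_f^{-1})'|\leq K_{12}$ from Lemma~\ref{L:bound-on-L'}, we conclude that $|\phi_f|$ is $O(1)$ on the real segment $[0,1]\times\{0\}$.

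Next, for $\gz=s+\B{i}t$ with $t>0$, I would estimate $|\partial_t\phi_f|$ using the chain rule on $G_f=L_f^{-1}\circ H_f$, combined with Lemma~\ref{L:basic-estimates-lift}. Split the vertical range $[0,\im\gz]$ into two pieces according to whether $\im v''_f+t$ lies in a fixed bounded interval around $0$ or not. On the bounded piece, which has length $O(1)$ uniformly in $f$ since $|\im v''_f|$ is uniformly bounded, I would apply the universal bound $|F_f'-1|\leq 1/4$ from part 1 of Lemma~\ref{L:basic-estimates-lift}, producing an $O(1)$ contribution after integration. On the remaining piece I would use the tight estimate
\[
|\partial_t\phi_f(s+\B{i}t)|\leq C\,|\gt_f(v''_f+\B{i}t)|
\]
coming from part~2 of the same lemma and the uniform bound on $(L_f^{-1})'$. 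Applying Lemma~\ref{L:estimate-on-covering} and the change of variable $u=2\pi\ga(\im v''_f+t)$, this reduces to
\[
\frac{CK_8}{2\pi}\int_{u_0}^{\infty}\frac{du}{e^u-1}=\frac{CK_8}{2\pi}\log\frac{1}{1-e^{-u_0}},
\]
with $u_0$ of order $\ga$ by construction; for $\ga$ small this equals $-\log\ga+O(1)$, giving the advertised bound $A_3(1-\log\ga)$ after combining with the base value at the real axis.

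For the existence of $\lim_{\im\gz\to+\infty}\phi_f(\gz)$, the tail of the same integral decays exponentially because $|\gt_f(v''_f+\B{i}t)|\leq K_8\ga/(e^{2\pi\ga t}-1)\to 0$ as $t\to\infty$, so $\phi_f$ converges along every vertical line; by $\D{Z}$-periodicity the limit is independent of $\re\gz$ and hence is a well-defined constant. The main obstacle is gluing the two regimes across the line $\im v''_f+t=0$: the finer estimate degenerates there, and one has to match it cleanly with the uniform bound of Lemma~\ref{L:basic-estimates-lift}-1 over a bounded vertical window, using the fact that $|\im v''_f|$ is controlled (via the asymptotic expansion of $\gs_f$ and the precompactness of $\IS_0$) so that this window does not contribute an extra $\log\ga$ factor.
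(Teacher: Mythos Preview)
Your approach has a genuine gap in the key estimate $|\partial_t\phi_f(s+\B{i}t)|\leq C\,|\gt_f(v''_f+\B{i}t)|$. Write out the chain rule: with $\partial_t H_f=\B{i}\big(1+s(F_f'(v''_f+\B{i}t)-1)\big)$,
\[
\partial_t\phi_f=(L_f^{-1})'(H_f)\cdot\partial_t H_f-\B{i}
=\B{i}\big[(L_f^{-1})'(H_f)-1\big]+\B{i}(L_f^{-1})'(H_f)\cdot s\big(F_f'-1\big).
\]
The second term is indeed $O(|\gt_f|)$ by Lemma~\ref{L:basic-estimates-lift}-2 and the uniform bound on $(L_f^{-1})'$. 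But the first term requires a quantitative rate for $(L_f^{-1})'\to 1$, and Lemma~\ref{L:bound-on-L'} gives only the qualitative convergence together with a uniform two-sided bound. Without an explicit decay rate for $|(L_f^{-1})'-1|$ you cannot conclude that $\int_0^\infty|\partial_t\phi_f|\,dt$ is finite, let alone bounded by $C(1-\log\ga)$; the same defect undermines your argument for the existence of the limit. Such finer estimates on $L_f$ do exist in the literature the paper cites, but they are not established here and the paper explicitly avoids them.

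The paper sidesteps this obstacle by using Green's formula on the rectangle $\Pi=[0,1]\times[d,D]$: since $L_f^{-1}$ is holomorphic, one has $\partial_{\bar\gz}G_f=(L_f^{-1})'\cdot\partial_{\bar\gz}H_f$, and $|\partial_{\bar\gz}H_f|\leq K_9|\gt_f(v''_f+\B{i}t)|$ by~\eqref{E:model-estimates-up}. Only the uniform bound on $|(L_f^{-1})'|$ is then needed to control $\iint_\Pi|\partial_{\bar\gz}G_f|$, which gives the $\log\ga^{-1}$ bound after the same explicit integration you describe. The boundary integral, via the periodicity $G_f(\gz+1)=G_f(\gz)+1$, reduces to the difference of horizontal averages of $G_f$ at heights $d$ and $D$; these averages differ from the point values $G_f(d\B{i})$, $G_f(D\B{i})$ by $O(1)$ because $|\partial_\gz G_f|$ is merely bounded. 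Setting $d=0$ yields the inequality, and letting $d\to+\infty$ gives the Cauchy criterion for the limit. The point is that Green's formula trades the full vertical derivative $\partial_t G_f$ (which you cannot control) for the antiholomorphic derivative $\partial_{\bar\gz}G_f$ (which is small precisely because $L_f$ is conformal).
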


\begin{proof}
Let $\gP:=[0,1]\times [d,D]$, for $D> d\geq 0$, and see that
\[\oint_{\partial \gP}G_f(\gz)\,d\gz=(D-d)\B{i}-\int_{0}^1 G_f(D\B{i}+t)\,dt+\int_{0}^1 G_f(d\B{i}+t)\, dt,\]
and (recall that $L_f$ is conformal) 
\begin{align*}
|\iint_\gP \partial_{\bar{\gz}}&G_f(\gz)\, d\gz d\bar{\gz}|\\
&\leq \sup_\gP  |(L_f^{-1})' |\iint_\gP |\partial_{\bar{\gz}}H_f(\gz)|\, d\gz d\bar{\gz}
&& \\
&\leq A_4\iint_\gP|\gt_f(v_f''+t\B{i})|\,d\gz d\bar{\gz}
&& (\text{Lem.~\ref{L:bound-on-L'},\! Eq.\!\! \eqref{E:model-estimates-up}})\\
& \leq A_5 \big (1+\iint_{[0,1]\times [d+1,\infty]} |\frac{\gs_f}{1-e^{-2\pi\ga(f)\im w}}|\,dw  d\bar{w} \big )
&& (\text{Lem. \ref{L:estimate-on-covering}})\\
&\leq A_5 \big (1+\frac{2\pi \ga(f)}{u_f(0)}\int_{d+1}^{\infty} \frac{1}{e^{2\pi\ga(f) t}-1}\,dt \big )
&& \\
&\leq A_5\big (1 +\frac{1}{u_f(0)}(\log (1-e^{-2\pi \ga(f)t})\Big|_{d+1}^{\infty})  \big)
&&\\
&\leq A_6 (1-\log (1-e^{-2\pi \ga (d+1)})) &&\\
& \leq A_6 (1-\log (1-e^{-2\pi \ga})) \leq A_6 (1-\log (\ga/2)) && \\
&\leq  2 A_6 (1-\log \ga). &&
\end{align*}
By Lemma~\ref{L:bound-on-L'} and that $|\partial_\gz H(\gz)|$ is uniformly bounded with 
$\partial_\gz H(\gz)\to 1$ as $\im \gz \to +\infty$, we know that $|\partial_\gz G|$ is uniformly bounded and 
tends to one, as $\im \gz \to +\infty$. 
Also, recall that $G_f(0)=\gi$ is uniformly bounded. 
This implies that 
\[\Big |\int_{0}^1 G_f(D\B{i}+t)\,dt- G_f(D\B{i})\Big | \tand \;\Big |\int_{0}^1 G_f(d\B{i}+t)-G_f(d\B{i})\, dt\Big |\]
are uniformly bounded from above, independent of $f$. 
However, by Green's Formula, the integrals over $\partial \gP$ and $\gP$ are equal. 
With $d=0$, we obtain the desired inequality of the sub-lemma along the imaginary axis. 
Now, the bound at the other points follow from the uniform bound on $|\partial_\gz G|$. 

To prove that the limit exists we use that $\partial_\gz G_f(\gz)\to 1$, as $\im \gz\to +\infty$.  
Let $d\to +\infty$ (while $D>d$), and observe that by the estimate given above, the integral over $\Pi$ tends 
to zero, and the integral over $\partial \Pi$ tends to $D\B{i}- G_f(D\B{i})+ G_f(d\B{i})-d\B{i}$. 
The function $t \mapsto G_f(t\B{i})-t\B{i}$ satisfies Cauchy's criterion of convergence, 
when $t$ tends to infinity. 
This implies that $G_f(\gz)-\gz$ has limit along the imaginary axis. 
However, since $\partial_\gz G_f\to 1$, as $\im \gz\to +\infty$, we conclude that $G_f(\gz)-\gz$ has limit 
along the strip $[0,1]\times(0,+\infty)$.
\end{proof}
By the pre-compactness of class $\IS_0$, $\{v_f''; f\in \QIS \}$ is compactly contained in $\D{C}\setminus \{0\}$. 
Therefore, from Equation~\eqref{E:H-asymptote} and the above sub-lemma, we conclude the first part of 
lemma~\ref{L:equi-Lipschitz} and that there exists a constant $K_{13}$ such that for all $f\in \QIS$, 
\begin{equation}\label{E:log-translation-main}
|\ell_f| \leq K_{13}(1-\log \ga(f)).
\end{equation}

To compare the asymptotic translations of the maps $G_g$ and $G_h$ near infinity, we consider the map 
$\gO:=G_h\circ G_g^{-1}$.
By the composition rule, the complex dilatation of $\gO$ at  $\gx=G_g(\gz)$ is given by  
\begin{align*} 
\gm_\gO(\gx)&=\frac{\gm_h(\gz)-\gm_g(\gz)}{1-\gm_g(\gz)\ol{\gm_h(\gz)}} 
\big (\frac{\partial_\gz G_g (\gz)}{|\partial_\gz G_g (\gz)|}\big )^2.
\end{align*}
We claim that when $\im L_g(\gx)>0$
\begin{gather*} 
|\gm_\gO(\gx)|\leq A_7\Td(\pi(g),\pi(h)) |\gt_g(L_g(\gx))|.
\end{gather*}
To see this, first note that $|\gm_g(\gz)|$ and $|\gm_h(\gz)|$ are bounded above by $1/3$, using the formulas given 
in \eqref{E:partials-of-H} and the inequalities in Lemma~\ref{L:basic-estimates-lift}-1.  
For the same reason, $\partial_\gz H_g (\gz)$ and $\partial_\gz H_h (\gz)$ belong to $B(1, 1/4)$, for all 
$\gz \in [0,1]\times \D{R}$.
Also, since $\{v_f''\mid f\in \QIS\}$ is compactly contained in $\D{C}\setminus \{0\}$, 
$|\gt_h(v_h''+\B{i}t)|$ and $|\gt_g(v_g''+\B{i}t)|$ are comparable by constants independent of $g,h\in \QIS$ and 
$t\in \D{R}$.  
Then, by a simple algebraic manipulation, it only remains to show that 
\begin{gather*}
|F_h(v_h''+\B{i}t)-(v_h''+\B{i}t) - F_g(v_g''+\B{i}t)+(v_g''+\B{i}t)|  \leq \Td(\gp(g),\gp(h)) | \gt_g(v_g''+\B{i}t)|,\\
|F_h'(v_h''+\B{i}t)- F_g'(v_g''+\B{i}t)| \leq \Td(\gp(g),\gp(h)) | \gt_g(v_g''+\B{i}t)|.
\end{gather*}
To obtain the first estimate we add and subtract two terms and then use the integral remainder formula in Taylor's theorem
\begin{align*}
|F_h(v_h''&+\B{i}t)-(v_h''+\B{i}t) - F_g(v_g''+\B{i}t)+(v_g''+\B{i}t)|   &&  \\
&\leq |F_h(v_h''+\B{i}t)-(v_h''+\B{i}t) - F_g(v_h''+\B{i}t) + (v_h''+\B{i}t)  && \\ 
&\q  +F_g(v_h''+\B{i}t) - (v_h''+\B{i}t) - F_g(v_g''+\B{i}t)+(v_g''+\B{i}t)| &&\\
&\leq K_{10} \Td (\gp(g),\gp(h)) |\gt_h(v_h''+\B{i}t)|&&\!\! \text{(Lemma~\ref{L:vertical-estimate-lift})} \\
& \q +|v_h''-v_g''| \Big |1-\int_0^1 F_g'(v_g''+\B{i}t + s (v_h''+\B{i}t -v_g''-\B{i}t)) ds \Big |&&\!\! \text{(Taylor's Thm.})\\
&\leq A_7'\Td(\gp(g),\gp(h)) | \gt_g(v_g''+\B{i}t)| && \text{(Lemma~\ref{L:basic-estimates-lift})}
\end{align*}
The other inequality is obtained from Lemma~\ref{L:vertical-estimate-lift} and the Cauchy Integral Formula.  

At the lower end, $\gm_\gO$ has a constant asymptotic value $C_\gO$ that is obtained from the
equations in~\eqref{E:model-estimates-down} that satisfies 
\[|C_\gO|\leq A_7' \frac{1}{2\pi \ga}|\log h'(\gs_h)-\log g'(\gs_g)|\leq A_7\Td(\pi(g),\pi(h)). \] 
By a similar calculations, one obtains the following estimate at the lower part 
\begin{gather*}
|\gm_\gO(\gx)-C_\gO|\leq A_7\Td(\pi(g),\pi(h)) |\gt_g(L_g(\gx))-\gs_g|.
\end{gather*}

Let $\gga_1$ and $\gga_2$ be two closed horizontal curves on $\CC/\D{Z}$ at heights $-R$ and $+R$, respectively. 
Also, let $\gga_3$ be a circle of radius $\gep$ around $[0]\in \CC/\D{Z}$. 
All these curves are considered with the induced positive orientation. 
Also, let $U_{R,\gep}$ be the set on the cylinder bounded by these curves.


By Green's integral formula, 
\begin{align}\label{E:Green-on-boundary}
\int_{\gga_1\cup \gga_2\cup \gga_3} (\gO(\gx)-\gx)\frac{\tan (\pi\gx)+\B{i}}{2\tan (\pi\gx)}\, d\gx
=\iint_{U_{R,\gep}}\partial_{\bar{\gx}}\gO(\gx) 
\frac{\tan (\pi\gx)+\B{i}}{2\tan (\pi\gx)}\, d\bar{\gx}d\gx
\end{align}
As $R\to +\infty$ and $\gep\to 0$, the left hand integral tends to 
\begin{equation}\label{E:Green-on-domain}
\int_{\gga_1\cup \gga_2\cup \gga_3} (\gO(\gx)-\gx)\frac{\tan (\pi\gx)+\B{i}}{2\tan (\pi\gx)}\, d\gx
\to \lim_{\im \gx\to +\infty}(\gO(\gx)-\gx).
\end{equation}
This is because as $\gep\to 0$ and $R\to +\infty$, on $\gga_1$, we have $|\gO(\gx)-\gx|=O(|\gx|)$ 
and the ratio tends to $0$ exponentially. 
Hence, the integral tends to zero. 
On $\gga_2$, $\gO(\gx)-\gx$ tends to a constant and the ratio tends to one, providing the limit on the right hand side. 
On the curve $\gga_3$, $|\gO(\gx)-\gx|=O(\gep)$ and the ratio is $O(1/\gep)$, while the integral is on a 
circle of circumference $2\pi \gep$. 
Thus the integral on $\gga_3$ also tends to zero.   

Using the uniform bounds on $|L_f'|$ and $|\partial _\gz H_f|$ we have a uniform bound on $|\partial_\gx \gO|$.  
This implies that $|\partial _{\bar{\gx}} \gO|\leq A_8 |\gm_\gO|$. 
To bound the other integral we decompose the cylinder into two sets  
\[X^+=\{\gx\in \D{C}\mid L_g^{-1}(\gx)\in H([0,1)\times \D{R}), \im L_g^{-1}(\gx)\geq 0\}/\D{Z},\]
\[X^-=\{\gx\in \D{C}\mid L_g^{-1}(\gx)\in H([0,1)\times \D{R}), \im L_g^{-1}(\gx)< 0\}/\D{Z}.\]
Note that $(\tan (\pi\gx)+\B{i})/(2\tan (\pi\gx))$ has a singularity of order one at $\gx=0$. 
But, it is uniformly bounded at points on the cylinder above the line $\im \gx=1$. 
Then, 
\begin{equation}\label{E:Teich-bound}
\begin{aligned}
\Big |\iint_{\CC/\D{Z}}\partial_{\bar{\gx}}\gO(\gx)&
\frac{\tan (\pi\gx)+\B{i}}{2\tan (\pi\gx)}\, d\bar{\gx}d\gx \Big| \\
&\leq A_8\iint_{\CC/\D{Z}} |\gm_\gO(\gx)| \Big|\frac{\tan (\pi\gx)+\B{i}}{2\tan (\pi\gx)}\Big|\, d\bar{\gx}d\gx\\
&\leq A_8 A_7 \Td(\pi(g),\pi(h))
\iint_{X^+} |\gt_g(L_g(\gx))| \Big|\frac{\tan (\pi\gx)+\B{i}}{2\tan (\pi\gx)}\Big|\, d\bar{\gx}d\gx ,  \\ 
&\qq  +A_8 \Td(\pi(g),\pi(h)) 
 \iint_{X^-} A_9 \Big|\frac{\tan (\pi\gx)+\B{i}}{2\tan (\pi\gx)}\Big|\, d\bar{\gx}d\gx \\
& \leq A_{10} \Td(\pi(g),\pi(h)) \log \ga(g)^{-1}.
\end{aligned}
\end{equation}
To obtain the last inequality, we have estimated the definite integrals as follows. 
The integral over $X^-$ is uniformly bounded since the ratio exponentially tends to zero as $\im \gx\to -\infty$. 
Over $X^+$ we have 
\begin{align*}
\iint_{X^+} & |\gt_g(L_g(\gx))| \Big|\frac{\tan (\pi\gx)+\B{i}}{2\tan (\pi\gx)}\Big|\, d\bar{\gx}d\gx && \\
&\leq \iint_{L_g(X^+)} |L_g'(\gx)|^{-2}|\gt_g(w)| \Big|\frac{\tan (\pi\gx)+\B{i}}{2\tan (\pi\gx)}\Big|\, d\bar{w}dw&& 
(\text{with }w=L_g(\gx))\\
&\leq K_{12}^2 A_{11} \iint_{L_g(X^+) \cap (\D{R}\times [0,1])} 
\Big|\frac{\tan (\pi L_g^{-1}(w))+\B{i}}{2\tan (\pi L_g^{-1}(w))}\Big|\, d\bar{w}dw \\
&\qq +K_{12}^2 A_{11} \iint_{L_g(X^+)\cap (\D{R}\times [1,\infty])} |\gt_g(w)| \, d\bar{w}dw  &&\\
&\leq K_{12}^2 A_{11}A_{12}+ K_{12}^2 A_{11}A_{12}\log \ga(g)^{-1}. &&
\end{align*}

Finally, recall that $\ga(g)=\ga(h)$ and  
\begin{align*}
|\ga(g)\ell_g&-\ga(h)\ell_h| \\
&=\ga(g)\lim_{\im \gz\to +\infty} \big| (H_g\circ G_g^{-1}(\gx)-\gx) - (H_h\circ G_h^{-1}(\gx)-\gx)\big| \\
&=\ga(g)\lim_{\im \gz\to +\infty}\big|\im v''_g-\im v''_h-\im (G_g(\gz)-\gz)+\im (G_h(\gz)-\gz)\big|\!\!\!\!
&\text{(Eq. \ref{E:H-asymptote})} \\
&= \ga(g) |\im v''_g-\im v''_h|+\ga(g)\lim_{\im \gz\to +\infty}|\im (G_g(\gz)-G_h(\gz))| 
& \text{(Eq. \ref{E:value-distance})}  \\
&\leq (A_8+A_{10}) \Td(\pi(g),\pi(h))
&\text{(Eq. \ref{E:Teich-bound})}
\end{align*}\qedhere
\end{proof}


\begin{proof}[Proof of Lemma \ref{L:horizontal-direction}]
In what follows, all the constants $A_1$, $A_2$, $A_3$, $\dots$, depend only on the class $\QIS$.

Let $f_\ga:=f_0(e^{2\pi\ga\B{i}}z)$ be in $\QIS$ with $f_\ga'(0)=e^{2\pi\ga \B{i}}$, 
and let $F_\ga$ denote the lift of $f_\ga$ under $\gt_{f_\ga}$. 
Also, let $L_\ga:=L_{f_\ga}$ denote the linearizing map of $F_\ga$ with asymptotic translation 
$\ell_\ga:= \ell_{f_\ga}$ near infinity. 
By Inequality~\eqref{E:log-translation-main},
\begin{equation}\label{E:log-translation}
|\ell_\ga|\leq K_{13}(1-\log \ga).
\end{equation}

We continue to use the notations in the proof of Lemma~\ref{L:vertical-direction}. 
We want to show that there is a constant $A_1$ with  
\begin{equation}\label{E:translation-derivative-bound}
|\ga \frac{d}{d\ga}\ell_\ga| \leq A_1.
\end{equation}
Note that by the pre-compactness of the class of univalent maps on the disk, $f_\ga\co{\gi}(-4/27)$ is uniformly away from 
$0$. 
Therefore, we have 
\begin{align*}
|\frac{d v''_\ga}{d\ga}|
&\leq\Big|\frac{d}{d\ga}\big(\frac{1}{2\pi \ga}
\log (1-\frac{1-e^{2\pi\ga\B{i}}}{f_\ga\co{\gi}(-4/27)u_{f_\ga}(0)})\big)\Big| \leq A_2.
\end{align*}

Let $H_\ga:=H_{f_\ga}$ defined in the proof of Lemma~\ref{L:vertical-direction}, $G_\ga:=G_{f_\ga}$, and 
$\gm_\ga:= \gm_{H_\ga}=\gm_{G_\ga}$.
Fix $\ga,\gb\in (0,\ga_*]$ close together, and define $\gO:= G_\gb \circ G_\ga^{-1}$.
By a similar calculations carried out in the previous proof, using Lemma~\ref{L:horizontal-estimate-lift} here, we 
can prove that $|d\gm_\ga/d \ga|$ is either less than $A_3|\gt_\ga(H_\ga(\gz))|$ or  $A_3$, depending on whether $\im H_\ga(\gz)>0$, 
or $\im H_\ga(\gz)<0$, respectively.
For $\gb> \ga$, $|\gt_\gb(w)|$ decays faster than $|\gt_\ga(w)|$ as $\im w\to +\infty$. 
To simplify the presentation from here, let us assume that $\gb> \ga$ without loss of generality (otherwise, 
exchange $\ga$ and $\gb$ in an obvious fashion).
Hence, by the above statement,  
\begin{equation}\label{E:dilatation-of-Omega}
\begin{gathered}
|\gm_{\gO}(\gx)|\leq A_4|\gb-\ga| |\gt_\ga(L_\ga(\gx))|, \text{ when } \im L_\ga(\gx)>0,     \\
|\gm_{\gO}(\gx)|\leq A_4|\gb-\ga|, \text{ when } \im L_\ga(\gx)<0.
\end{gathered}
\end{equation}

As stated in the previous proof, $|\partial_\gx \gO|$ is uniformly bounded depending only on the class $\IS_0$. 
Hence, by similar calculations (with the same $X^+$ and $X^-$ defined in the previous proof),  
\begin{equation}
\begin{aligned}
\Big |\iint_{\CC/\D{Z}}&\partial_{\bar{\gx}}\gO(\gx)
\frac{\tan (\pi\gx)+\B{i}}{2\tan (\pi\gx)}\, d\bar{\gx}d\gx \Big| \\
&\leq A_5\iint_{\CC/\D{Z}} |\gm_\gO(\gx)| \Big|\frac{\tan (\pi\gx)+\B{i}}{2\tan (\pi\gx)}\Big|\, d\bar{\gx}d\gx\\
&\leq A_5 A_4|\gb-\ga|\Big (
\iint_{X^+} |\gt_\ga(L_\ga(\gx))| \Big|\frac{\tan (\pi\gx)+\B{i}}{2\tan (\pi\gx)}\Big|\, d\bar{\gx}d\gx  \\ 
&\qq \qq\qq\qq
+ \iint_{X^-} \Big|\frac{\tan (\pi\gx)+\B{i}}{2\tan (\pi\gx)}\Big|\, d\bar{\gx}d\gx  
\Big )\\
&\leq A_6 |\gb-\ga| \log \ga(f)^{-1}+ A_6 |\gb-\ga|
\end{aligned}
\end{equation}
Combining with equations \eqref{E:Green-on-boundary} and \eqref{E:Green-on-domain},
\begin{align*}
\big|\frac{d\ell_\ga}{d\ga}(\ga(f))\big|= \Big|\lim _{\gb\to \ga(f)} \frac{v_{\ga(f)}''-v_\gb''
+\lim_{\gx\to \infty}(\gO(\gx)-\gx)}{\ga(f)-\gb}\Big|\\
\leq A_2+ A_6+A_6\log \ga(f)^{-1},
\end{align*}
proving the desired Inequality~\eqref{E:translation-derivative-bound}. 

Combining the two inequalities \eqref{E:log-translation} and \eqref{E:translation-derivative-bound} we conclude 
that 
\[|\frac{d (\ga \ell_\ga)}{d\ga}|\leq A_7(1-\log \ga),\]
which can be integrated to produce the inequality in the lemma.
\end{proof}

Here, we prove a technical lemma that will be used in the next section. 
Recall the number of pre-images $k_f$ needed to take in the definition of the renormalization, and the constant 
$\B{k}$ defined in section~\ref{SS:Inou-Shishikura}.
\begin{lem}\label{L:turning}
There exists $K_7\in \D{Z}$ such that for all $f \in \QIS$, $k_f\leq K_7$. 
\end{lem}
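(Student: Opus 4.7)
The plan is a compactness argument. By Theorem~\ref{Ino-Shi2}, $k_f$ is finite for every $f\in\QIS$; only the uniformity is at issue.

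First, I would observe that $\QIS$ is precompact in the compact-open topology: the maps in $\IS_{(0,\alpha_*]}$ form a normal family via Koebe distortion (Theorem~\ref{T:distortion-estimates}) applied to univalent maps with fixed normalization at $0$, and $\{Q_\alpha : \alpha\in(0,\alpha_*]\}$ is trivially precompact in $\alpha$. Accumulation points lie in $\IS_{[0,\alpha_*]}\cup\{Q_\alpha : \alpha\in[0,\alpha_*]\}$, so parabolic limits $\alpha(f_\infty)=0$ must be allowed.

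Second, the four defining conditions for $k_h$ in Section~\ref{sec:prelim} are all \emph{open} conditions on the nested preimages $\C{C}_h^{-k}$ and $(\Csh_h)^{-k}$; by the Inou-Shishikura construction and its elaboration in \cite[Proposition~13]{BC12}, these preimages depend continuously on $h$ in the compact-open topology, and the perturbed Fatou coordinates $\Phi_h$ converge to the classical Fatou–Leau coordinate as $\alpha\to 0^+$. Consequently, $f\mapsto k_f$ is upper semicontinuous on the closure of $\QIS$. At any parabolic accumulation point $f_\infty$ with $\alpha(f_\infty)=0$, the corresponding integer $k_{f_\infty}$ for the classical parabolic renormalization is a fixed finite integer, determined only by the universal constants $\B{k}$ and $\hat{\B{k}}$ and the combinatorics of the petal configuration of the Inou-Shishikura setup. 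Thus if $k_{f_n}\to \infty$ for some sequence in $\QIS$, passing to a convergent subsequence $f_n\to f_\infty$ would yield $k_{f_n}\leq k_{f_\infty}$ eventually, a contradiction.

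The main obstacle lies in justifying continuity and upper semicontinuity of $k_f$ across the parabolic boundary $\alpha\to 0^+$, where the image strip $\{0<\re w <1/\alpha-\B{k}\}$ becomes unbounded and the geometry of the preimages might, a priori, degenerate. I would resolve this by working in the universal cover $\gt_f$ of Section~\ref{SS:the-lift}: Lemma~\ref{L:basic-estimates-lift} provides uniform estimates on the lift $F_f$ on a domain $\D{C}\setminus B(\D{Z}/\alpha, K_9)$ that is uniform in $\alpha$, and the sequence of lifted preimages of $\C{C}_h^{-k}$ and $(\Csh_h)^{-k}$ under $\gt_f^{-1}$ converges in the Hausdorff sense, as $\alpha\to 0^+$, to the analogous preimages defined in the parabolic Fatou coordinate. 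This uniform convergence on compact subsets of $\Dom F_f$ is exactly what is needed to propagate upper semicontinuity of $k_f$ to the parabolic boundary of $\QIS$, completing the argument and providing the universal constant $K_7$.
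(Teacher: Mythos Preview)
Your approach is plausible but differs substantially from the paper's, and it carries some loose ends that the paper's argument sidesteps.

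The paper does not argue by upper semicontinuity of $k_f$ on a compactification of $\QIS$. Instead it exploits compactness of the \emph{renormalized} maps: by Theorem~\ref{Ino-Shi2}(a) each $\C{R}(f)=P\circ\psi^{-1}$ has $\psi$ extending univalently over the fixed larger domain $U\Supset V$, and the paper shows (using the $\eta\leq 13$ flexibility in \cite[Section~5.N]{IS06}) that the conformal modulus of $\psi(U)\setminus\Dom\C{R}''(f)$ is uniformly bounded below. Koebe's distortion theorem then makes $\{\C{R}''(f):f\in\QIS\}$ a compact family in the compact-open topology. Consequently every lift of $\C{R}''(f)$ under $\ex$ deviates from a translation by at most a uniform constant $b$. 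Since that lift is $\Phi_f\circ f^{k_f}\circ\Phi_f^{-1}$, and one checks from the geometry near $0$ that $k_f\geq\B{k}+2$, this yields the explicit bound $k_f\leq\B{k}+3+b$. No passage to the parabolic boundary is needed.

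Your route can probably be made rigorous, but two points are not yet pinned down. First, the compact-open closure of $\QIS$ may contain maps $P\circ\varphi^{-1}$ where $\varphi$ has no quasiconformal extension, hence lie outside $\IS_0$ as defined here; you would need to verify that the Inou--Shishikura petal construction and the integer $k_f$ still make sense for such limits. Second, your claim that $k_{f_\infty}$ is ``a fixed finite integer, determined only by $\B{k}$ and $\hat{\B{k}}$'' is stronger than needed and not justified; what you actually require is upper semicontinuity of $k_f$ at each parabolic limit, and while your suggestion to use the lifts $F_f$ and Lemma~\ref{L:basic-estimates-lift} points in the right direction, the Hausdorff convergence of the lifted preimages of $\C{C}_h^{-k}$, $(\Csh_h)^{-k}$ as $\alpha\to 0^+$ is asserted rather than proved. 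The paper's argument trades these delicate limit issues for a direct use of the uniform univalent extension already guaranteed by Theorem~\ref{Ino-Shi2}.
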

\begin{proof}
From the first part of Lemma~\ref{L:equi-Lipschitz}, and the definition of $L_f$ we conclude that the image of 
every vertical line under $\gF_f^{-1}$ is a curve landing at zero with a well-defined asymptotic slope at zero. 
Fix $w\in \Csh_f$ close to zero, and note that there exists a unique inverse orbit $w$, $f^{-1}(w)$, 
$\dots,f^{-j}(w)$, of minimal cardinality that is contained in a neighborhood of zero and $f^{-j}(w)\in \C{P}_f$. 
Comparing with the rotation of angle $\ga$, $j\in \{\B{k}+1, \B{k}+2\}$. 
By the definition of $k_f$, $k_f\geq  \B{k}+2$.

The map $\gF_f\circ  f\co{k_f}  \circ \gF_f^{-1}: \gF_f(S_f)\to \D{C}$ projects via $\ex$ to the map $\C{R}''(f)$. 
A suitable restriction of this map to a smaller domain is $\C{R}(f)$. 
By Theorem~\ref{Ino-Shi2}, $\C{R} (f)$ is of the form $P\circ \psi^{-1}(e^{2\pi\ga \B{i}} \cdot)$ with 
$\gy: V\to \D{C}$ extending univalently over the larger domain $U$, which compactly contains $V$. 
Indeed, it follows from the work of Inou-Shishikura that the domain of $\C{R}''(f)$ is well contained in $\gy(U)$, 
i.e.\ the conformal modulus of $\gy(U)\setminus \Dom (\C{R}''(f))$ is uniformly away from zero. 
This is because according to \cite[Section~5.N]{IS06}, Theorem~\ref{Ino-Shi2} still holds even if one starts with the 
larger sets $\C{C}_f=\gF_f^{-1}([1/2,3/2]\times [-\gh, \gh])$ and 
$\Csh_f=\gF_f^{-1}([1/2,3/2]\times [\gh, \infty])$, for $\gh\leq 13$, in the definition of renormalization.
Say with $\gh=4$, we obtain an extension of $\C{R}''(f)$ whose image is a disk of radius $e^{2\pi \gh}$. 
The set $P^{-1}(B(0,e^{2\pi \gh})\setminus B(0,e^{4\pi}))$ provides a definite annulus on which $\gy$ has 
univalent extension.
   
By the above argument and Koebe distortion Theorem, the set of maps $\C{R}''(f)$, $f\in \QIS$, forms a compact class of maps (in the uniform convergence topology).  
Therefore, there exists a constant $b\in \D{R}$ (independent of $f$) such that every lift of $\C{R}''(f)$ under $\ex$ 
deviates from a translation by a constant, at most by $b$. 
This implies that $k_f$ is bounded by $\B{k}+3+b$. 
\end{proof}

\subsection{Univalent maps with small Schwarzian derivative}\label{SS:Schwarzian-derivatives}
In this subsection we prove Proposition~\ref{P:contracting}.
Given $f,g\in \QIS$, set $h(z):=f(e^{2\pi (\ga(g)-\ga(f))\B{i}}z)$ and note that by virtue of  Theorem~\ref{Ino-Shi2} 
we only need to show that
\[\Td(\pi\circ \C{R}(f), \pi\circ \C{R}(h))\leq K_2 |\ga(f)-\ga(h)|
,\] 
for some constant $K_2$ depending only on $\QIS$. 

Within this subsection all the constants $A_1, A_2, A_3, \dots$ depend only on $\QIS$, unless otherwise stated. 
For our convenience, we introduce the following notations.

Given $f_0\in \IS_0\u\{Q_0\}$ and $\ga\in (0,\ga_*]$, define 
\[f_\ga(z):=f_0(e^{2\pi \ga \B{i}}z).\] 
We write the renormalized map in the form
\begin{equation}\label{E:two-representation}
\begin{gathered}
\C{R}(f_\ga)(w)=P\circ \gy_\ga^{-1}(e^{2\pi\frac{1}{\ga}\B{i}}w), \text{ with } \gy_\ga:V\to \D{C}.
\end{gathered}
\end{equation}
The map $\gy_\ga:V\to V_\ga:=\gy_\ga(V)$ is univalent, $\gy_\ga(0)=0$, and $\gy_\ga'(0)=1$. 
Recall that by Theorem~\ref{Ino-Shi2} every $\gy_\ga$ has univalent extension onto the larger domain $U$.
The main step toward proving the above desired inequality is the following proposition that we shall prove here 
in several steps.

\begin{propo}\label{P:psi-derivative}
There exist a constant $A_1$ and a Jordan domain $W$, depending only on $\QIS$, with $V \Subset W \Subset U$, such that 
\[\forall z\in W,  \forall \ga\in (0,\ga_*], |\frac{\partial \gy_\ga}{\partial \ga}(z)|\leq A_1.\]
\end{propo}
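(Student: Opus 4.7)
The strategy is to prove a uniform Lipschitz estimate $|\psi_\ga(z)-\psi_\gb(z)|\leq C|\ga-\gb|$ for $z$ in a neighborhood of zero inside $U$, and then upgrade it to the pointwise bound on $\partial_\ga \psi_\ga$ over an intermediate Jordan domain $W$ by a Koebe distortion / Lehto majorant argument in the style of Lemma~\ref{L:metrics-related}.

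The first and crucial step is to decouple the singular rotation factor $e^{2\gp\B{i}/\ga}$ from $\psi_\ga$ by working with the ``un-rotated'' renormalization
\[g_\ga(\gz) := P\circ\psi_\ga^{-1}(\gz) = \C{R}(f_\ga)(e^{-2\gp\B{i}/\ga}\gz),\]
from which $\psi_\ga$ is recovered by inverting $P$ locally with the branch fixing the origin and having derivative one. Although $\C{R}(f_\ga)$ has a highly singular $1/\ga$-type dependence through the factor $e^{2\gp\B{i}/\ga}$, this is a pure rotation that is absorbed in the passage to $g_\ga$. Explicitly, unwinding the Inou--Shishikura construction $\C{R}''(f_\ga) = \ex\circ F_\ga\co{k_{f_\ga}}\circ L_\ga \circ \ex^{-1}$ (up to the conjugation $s(z)=\bar z$ and a careful choice of branches), the factor $e^{2\gp\B{i}/\ga}$ produced by $\ex$ on the outside cancels exactly against the $e^{-2\gp\B{i}/\ga}$ in the definition of $g_\ga$; one obtains a formula for $g_\ga$ whose $\ga$-dependence enters only through the objects $F_\ga$, $L_\ga$, $\gt_{f_\ga}$, and $\gs_{f_\ga}$ already studied in Subsection~\ref{SS:the-lift}.

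With this formula in hand, I would estimate $\partial_\ga g_\ga$ by the chain rule, combining Lemma~\ref{L:horizontal-estimate-lift} to bound $|\partial_\ga F_\ga|$, Lemma~\ref{L:bound-on-L'} to bound $|L_\ga'|$, a bound on $|\partial_\ga L_\ga|$ obtained by iterating the Abel equation $L_\ga(\gx+1) = F_\ga(L_\ga(\gx))$ from the asymptotic regime at $+\B{i}\infty$ (where $L_\ga$ is asymptotic to a translation by $\ell_\ga$ whose $\ga$-variation is controlled by Lemma~\ref{L:horizontal-direction}), and Lemma~\ref{L:turning} ensuring $k_{f_\ga}\leq K_7$ so only finitely many factors appear. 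This yields $|\partial_\ga g_\ga(\gz)| \leq C$ uniformly for $\gz$ in a definite compact neighborhood of zero, and hence, by inverting the locally bi-Lipschitz relation $\psi_\ga^{-1}=P_*^{-1}\circ g_\ga$ with $P'(0)=1$, a uniform Lipschitz estimate for $\psi_\ga$ on a neighborhood of zero.

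To propagate this estimate from a small neighborhood of zero to the full intermediate domain $W$, I would apply Lehto's majorant principle on $U$ to the functional $z\mapsto (\psi_\ga(z)-\psi_\gb(z))/(\ga-\gb)$, which is holomorphic on $U$, vanishes at $z=0$, and is uniformly bounded on $U$ by Koebe's theorem since both $\psi_\ga$ and $\psi_\gb$ are normalized univalent maps on $U$. This is exactly the scheme used in the proof of Lemma~\ref{L:metrics-related}. Letting $\gb\to\ga$ then yields the desired uniform bound $|\partial_\ga \psi_\ga(z)|\leq A_1$ on $W$. The main obstacle is the algebraic bookkeeping in the first step: verifying that all coordinate changes and exponentials in the Inou--Shishikura construction conspire so that $g_\ga$ depends on $\ga$ only through the already-controlled objects, with no residual $1/\ga$ singularity. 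Once this algebraic identification is secured, the rest follows routinely from the horizontal estimates of Subsection~\ref{SS:the-lift}.
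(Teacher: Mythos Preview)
Your overall strategy --- absorb the singular rotation $e^{2\pi\B{i}/\ga}$ and then control $g_\ga=P\circ\gy_\ga^{-1}$ through the already-established estimates on $F_\ga$ and $L_\ga$ --- is the right instinct, and it matches the paper's device of studying $E_\ga\circ T_{1/\ga}$ upstairs. But there is a genuine gap in how you propose to bound $\partial_\ga L_\ga$.

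You suggest controlling $\partial_\ga L_\ga$ by iterating the Abel equation down from $+\B{i}\infty$, where $L_\ga$ is asymptotic to a translation by $\ell_\ga$, and you cite Lemma~\ref{L:horizontal-direction} for the $\ga$-variation of $\ell_\ga$. But that lemma controls $\ga\ell_\ga$, not $\ell_\ga$ itself; unwinding gives $|\partial_\ga\ell_\ga|=O(|\log\ga|/\ga)$, which blows up as $\ga\to 0$. So the asymptotic at $+\B{i}\infty$ is useless as a starting point for a uniform bound. More fundamentally, the domain $\gF_\ga(S_\ga)$ on which $E_\ga$ acts lives near the \emph{right} end of the strip, at $\re\approx 1/\ga$, while the normalization of $L_\ga$ (via $\gF_\ga(\cp_\ga)=0$) sits at the \emph{left} end. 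You cannot iterate from one to the other: that would be $\sim 1/\ga$ steps and the errors from Lemma~\ref{L:horizontal-estimate-lift} would accumulate without bound. The paper handles this with a separate, nontrivial construction (Lemma~\ref{L:partials-of-L}, Part~2): it builds a new model map $\widehat{H}_\gb$ for $\widehat{L}_\gb:=T_{-1/\gb}\circ L_\gb\circ T_{1/\gb}$, normalized at the top of the cylinder so that the two asymptotic translation constants cancel, and then runs the quasi-conformal comparison argument again. This right-end analysis is the missing idea in your proposal.

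A secondary issue: your final step --- extend the bound from a small neighborhood of $0$ to all of $W$ by a Lehto/Schwarz argument --- does not work as stated, since Schwarz-type arguments propagate bounds \emph{inward}, not outward. The paper instead obtains the bound first on the annulus $\gy_\ga(W\setminus V)$, which corresponds upstairs to the region of bounded $|\im w|$ where the lift estimates are clean, and then fills in $V$ by the maximum principle applied to the holomorphic function $z\mapsto\partial_\ga\gy_\ga(z)$.
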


Let $\gF_\ga:\C{P}_\ga\to \D{C}$ denote the normalized Fatou coordinate of $f_\ga$ that can be decomposed as in 
$\gF_\ga^{-1}= \gt_\ga\circ L_\ga$, where $\gt_\ga$ is the covering map defined in Equation~\eqref{E:covering} 
for $f_\ga$. 
The lift of $f_\ga$ under $\gt_\ga$ (defined just before Lemma~\ref{L:equi-Lipschitz}) is denoted by $F_\ga$. 
Let $\C{C}_\ga$ and $\Csh_\ga$ denote the sets defined in Equation~\eqref{E:sector-def} for the map $f_\ga$. 
Also, let $k_\ga:=k_{f_\ga}$ denote the smallest positive integer for which
$S_\ga:=\C{C}^{-k_\ga}_\ga\u(\Csh_\ga)^{-k_\ga}$ is contained in 
\[\{z\in \C{P}_\ga\mid  \re \gF_\ga(z)\in (1/2, \ga^{-1} - \B{k}-1/2)\}.\]

Consider
\[E_\ga:\gF_\ga (S_\ga)\to \gF_\ga(\C{C}_\ga\u\Csh_\ga), \;
E_\ga(w):=\gF_\ga \circ f_\ga\co{k_\ga} \circ \gF_\ga^{-1}(w).\]
By the definition of renormalization, $E_\ga$ projects under $\ex$ to the map $\C{R}''(f_\ga)$.
Recall that a suitable restriction of $\C{R}''(f_\ga)$ to a smaller domain is $\C{R}(f_\ga)$.
Also, let
\begin{equation}\label{E:widetilde(E)-difficult}
\wt{E}_\ga:L_\ga(\gF_\ga (S_\ga))   \to L_\ga(\gF_\ga(\C{C}_\ga\u\Csh_\ga)), \;
\wt{E}_\ga:= L_\ga \circ E_\ga \circ  L_\ga^{-1}. 
\end{equation}

\begin{lem}\label{L:buffer}
There exists a positive constant $A_2<1/4$ such that $\wt{E}_\ga$ has holomophic extension onto 
$X_\ga:= B(L_\ga\circ \gF_\ga (S_\ga), A_2)$, and we have
\[\wt{E}_\ga=F_\ga\co{k_\ga}-\frac{1}{\ga}. \]
Moreover, $|\partial F_\ga\co{k_\ga}/\partial \ga|$ is uniformly bounded on $X_\ga$, with a bound depending 
only on $\QIS$.    
\end{lem}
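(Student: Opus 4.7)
The strategy is to translate $\wt E_\ga$ into the lift via the semiconjugacy $\gt_\ga\circ F_\ga=f_\ga\circ \gt_\ga$, so that both the extension and the $\ga$-dependence become statements about iterating $F_\ga$, whose holomorphic domain $\CC\setminus B(\D{Z}/\ga,K_9)$ is uniform in $\ga$ by Lemma~\ref{L:basic-estimates-lift}. Iterating the semiconjugacy gives $\gt_\ga\circ F_\ga\co{k_\ga}=f_\ga\co{k_\ga}\circ \gt_\ga$, and combined with $\gF_\ga^{-1}=\gt_\ga\circ L_\ga$ this rewrites the definition of $\wt E_\ga$ as
\[\wt E_\ga(w)=L_\ga\circ \gF_\ga\circ \gt_\ga\bigl(F_\ga\co{k_\ga}(w)\bigr).\]
Since $L_\ga\circ \gF_\ga$ is a branch of $\gt_\ga^{-1}$, the composition $L_\ga\circ \gF_\ga\circ \gt_\ga$ differs from the identity by a locally constant element of $\tfrac{1}{\ga}\D{Z}$. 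I will identify this integer as $-1$ by comparing real parts on $L_\ga\circ \gF_\ga(S_\ga)$: the domain sits near $\re\sim 1/\ga-\B{k}$, the iterate $F_\ga\co{k_\ga}$ shifts by approximately $k_\ga$ (using $|F_\ga(w)-(w+1)|\le 1/4$ from Lemma~\ref{L:basic-estimates-lift}-1 and $k_\ga\le K_7$ from Lemma~\ref{L:turning}), while the image $L_\ga\circ \gF_\ga(\C{C}_\ga\u\Csh_\ga)$ lies near $\re\sim 1$, forcing the deck shift to equal $-1/\ga$.

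Next, for the holomorphic extension, the key geometric fact I need is that $L_\ga\circ \gF_\ga(S_\ga)$ is uniformly separated from $B(\D{Z}/\ga,K_9)$. By the definition of $k_\ga$, on $S_\ga$ one has $\re \gF_\ga\in (1/2,1/\ga-\B{k}-1/2)$, so $S_\ga$ is uniformly away from $0$ and $\gs_\ga$ in $\C{P}_\ga$, and using the uniform bounds on $|L_\ga'|$ from Lemma~\ref{L:bound-on-L'} together with the fact that $\D{Z}/\ga$ is exactly the pole set of $\gt_\ga$ gives the required separation. A sufficiently small universal $A_2<1/4$ will then place $X_\ga$ inside $\CC\setminus B(\D{Z}/\ga,K_9)$, and an induction using the near-translation estimate from Lemma~\ref{L:basic-estimates-lift}-1 together with $k_\ga\le K_7$ will show that every intermediate iterate $F_\ga\co{j}(X_\ga)$ for $0\le j\le k_\ga$ stays in the same good region, so that $\wt E_\ga=F_\ga\co{k_\ga}-1/\ga$ extends holomorphically to $X_\ga$.

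Finally, for the derivative bound I apply the chain rule
\[\frac{\partial F_\ga\co{k_\ga}}{\partial \ga}(w)=\sum_{j=0}^{k_\ga-1}\Big(\prod_{i=j+1}^{k_\ga-1}F_\ga'(F_\ga\co{i}(w))\Big)\cdot\frac{\partial F_\ga}{\partial \ga}(F_\ga\co{j}(w)),\]
combined with $|F_\ga'|\le 5/4$ from Lemma~\ref{L:basic-estimates-lift}-1, the pointwise bounds on $|\partial_\ga F_\ga|$ from Lemma~\ref{L:horizontal-estimate-lift} (which give a uniform constant bound on $X_\ga$ in both regimes $\im w>0$ and $\im w<0$), and $k_\ga\le K_7$. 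I expect the main obstacle to be the geometric step: uniformly bounding the orbit $\{F_\ga\co{j}(X_\ga)\}_{j=0}^{k_\ga-1}$ away from $\D{Z}/\ga$ and pinning down the deck branch as $-1/\ga$; once these are in place, both the extension and the derivative bound follow directly from estimates already assembled in Section~\ref{SS:the-lift}.
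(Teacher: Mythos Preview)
Your identification of the formula $\wt E_\ga = F_\ga\co{k_\ga}-1/\ga$ via the semiconjugacy and deck-transformation count is fine, and the paper proceeds the same way (it simply observes the formula holds for large $\im w$ and extends by analytic continuation).

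The gap is in your extension argument. The induction you propose---``each step moves by $1\pm 1/4$, there are at most $K_7$ steps, so the orbit stays in $\CC\setminus B(\D Z/\ga,K_9)$''---does not work, because the orbit \emph{must} cross the vertical line $\re w=1/\ga$ (it starts in the strip $(0,1/\ga)$ and lands, after the deck shift, back in that strip). For the $\Csh$-part this crossing happens at large $\im w$ and is harmless. But for the $\C C$-part the intermediate sets $f_\ga\co{j}(S_\ga)=\C C_\ga^{-(k_\ga-j)}\cup(\Csh_\ga)^{-(k_\ga-j)}$ can contain points with $|z|$ of order one (for instance $\C C_\ga^{-1}$ contains the critical point), and since the poles $\D Z/\ga$ of $\gt_\ga$ correspond to $z=\infty$, a bound $|z|\le M$ only gives $d(w,\D Z/\ga)\gtrsim 1/M$, which need not exceed the fixed constant $K_9$. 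So the intermediate iterates may well enter $B(\D Z/\ga,K_9)$, where neither Lemma~\ref{L:basic-estimates-lift}-1 nor Lemma~\ref{L:horizontal-estimate-lift} is available; this breaks both your induction for the extension and your chain-rule bound on $\partial_\ga F_\ga\co{k_\ga}$.

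The paper avoids this entirely by working downstairs. It uses the Inou--Shishikura enlargement (replacing $\gh=2$ by $\gh=4$ in the definition of $\C C_\ga,\Csh_\ga$) together with pre-compactness and the bound $k_\ga\le K_7$ to produce a uniform $A_3>0$ such that $f_\ga\co{i}$ is defined on $B(S_\ga,A_3)$ for all $i\le k_\ga$. Only then does it lift: since $|\gt_\ga'|$ is uniformly bounded, a small $A_2$ makes $\gt_\ga(X_\ga)\subset B(S_\ga,A_3)$, hence $F_\ga\co{k_\ga}$ extends to $X_\ga$ via the semiconjugacy. The derivative bound then follows from compactness in the $z$-plane rather than from pointwise estimates along an orbit in the $w$-plane.
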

\begin{proof}
By the definition of $F_\ga$, we have the formula at points with large imaginary part. 

First we claim that there exists an $A_3>0$ such that $f_\ga\co{i}$, for $i=1, \dots, k_\ga$, is defined 
on $B(S_\ga, A_3)$. 
Obviously, these iterates are defined on a ball of uniform size $B(0, A_4)$.
Away from zero, we use a result of \cite[Section~5.N]{IS06}. 
It is proved that Theorem~\ref{Ino-Shi2} still holds even if one starts with the larger sets  
$\C{C}_{\ga, \gh}:=\gF_\ga^{-1}([1/2,3/2]\times [-\gh, \gh])$ and 
$\Csh_{\ga, \gh}:=\gF_\ga^{-1}([1/2,3/2]\times [\gh, \infty])$, 
for $\gh\leq 13$, in the definition of renormalization.
For example, using $\gh=4$, we obtain the corresponding set $\hat{S}_\ga$ on which $f_\ga\co{k_\ga}$ is defined.
Now, by the pre-compactness of $\QIS$, the continuous dependence of $\gF_\ga$ on $\ga$, and the uniform 
bound on $k_\ga$ in Lemma~\ref{L:turning}, 
$B(0, A_4)\u \hat{S}_\ga\u f_\ga(\hat{S}_\ga) \u \gF_\ga^{-1} (\gF_\ga (\hat{S}_\ga)-1)$ contains a 
neighborhood (of a uniform size) of $S_\ga$ on which the iterates $f\co{i}$, for $i=1, \dots, k_\ga$, are defined. 

By the pre-compactness of the class $\QIS$, there is $A_3'>0$ such that 
$\gt_\ga(\D{C}\setminus B(\D{Z}/\ga, A_3'))$ contains  $B(S_\ga, A_3)\setminus \{0\}$, for all $\ga$.  
Now, $|\gt'_\ga|$ is uniformly bounded on $\gt_\ga(\D{C}\setminus B(\D{Z}/\ga, A_3'))$. 
Hence, there exists $A_2>0$ such that $\gt_\ga(B(L_\ga\circ \gF_\ga(S_\ga), A_2))$ 
is contained in $B(S_\ga, A_3)$. 
This implies that there is a unique extension of $F_\ga\co{k_\ga}$ onto $X_\ga:=B(L_\ga(\gF_\ga (S_\ga)), A_2)$. 
Therefore, $\wt{E}_\ga$ has holomorphic extension onto $X_\ga$, and the above equation is meaningful and holds. 
\end{proof}

By the uniform bounds on $|L_\ga'|$ in Lemma~\ref{L:bound-on-L'}, there exists a neighborhood 
$Z_\ga:=B(\gF_\ga (S_\ga), A_4)$ such that $L_\ga(Z_\ga)\ci X_\ga$.
This implies that $E_\ga$ is defined on $Z_\ga$. 
  
Since each $\gy_\ga$ has univalent extension onto $U$, by Koebe distortion Theorem, 
there exists a constant $A_5'$ depending only on $V$ and the conformal modulus of $U\setminus V$ such that 
$\partial (\gy_\ga(V))\subseteq B(0, A_5')\setminus B(0,1/A_5')$. 
Hence,
\[\forall w\in \ex^{-1}(\partial V_\ga \cdot e^{-2\pi \frac{1}{\ga}\B{i}}),\; |\im w| \leq A_5.\]  
On the set $\{w\in \D{C}; |\im w|\leq A_5+1\}$, $|\ex'|$ is uniformly bounded from above and below. 
Therefore, the above argument implies that there exists a domain $W  \supset V$ with $\partial W\ci U\setminus V$,
such that 
\[\gy_\ga(W)\cdot e^{-2\pi \frac{1}{\ga}\B{i}}\ci  \ex(Z_\ga).\]
    
Fix $\ga\in (0,\ga_*]$.
We may redefine $k_\gb$ so that $k_\gb=k_\ga$, for $\gb\in (0,\ga_*]$ sufficiently close to $\ga$. 
This will guarantee that $\wt{E}_\gb$ depends continuously on $\gb$ near $\ga$.   
To see the claim, recall that in the definition of renormalization, $k_\ga$ is chosen so that 
$\re \gF_\ga(S_\ga)\ci (1/2, 1/\ga-k-1/2)$. 
By the continuity of $\gF_\gb$ in terms of $\gb$, for $\gb\in (0,\ga_*]$ sufficiently close to $\ga$, we have 
$|k_\ga-k_\gb|\leq 1$. 
On the other hand, one may define the renormalization using the iterate 
$f_\gb^{k_\gb+1}:\gF_\gb^{-1}(\gF_\gb(S_\gb)-1) \to \D{C}$. 
It should be clear from the definition that both $f_\gb^{k_\gb}: S_\gb\to \D{C}$ and 
$f_\gb^{k_\gb+1}:\gF_\gb^{-1}(\gF_\gb(S_\gb)-1) \to \D{C}$ result in the same map $\C{R}''(f)$. 
Hence, we may choose the local maximum of $k_\gb$ near $\ga$, and assume that the renormalization is defined 
using that integer for $\gb$ close to $\ga$.
 
Let $T_b(z):=z+b$, for $b\in \D{C}$, and recall the constant $\B{k}$ defined in Section~\ref{SS:Inou-Shishikura}. 
\begin{lem}\label{L:partials-of-L}
$\forall A_6'$, $\exists A_6$ such that at all $\gb\in (0,\ga_*]$ and 
\begin{itemize}
\item[1)] $\forall w\in B(0, A_6')$ with $\re w>1/4$, we have 
\[\Big |\frac{\partial L_\gb}{\partial\gb}(w)\Big |\leq A_6,\; \;
\Big |\frac{\partial L_\gb^{-1}}{\partial\gb}(L_\gb^{-1}(w))\Big |\leq A_6;\] 
\item[2)] $\forall w\in B(0, A_6')$ with $\re w<-\B{k}-1/4$, we have $|L_\gb(w+1/\gb)-1/\gb|\leq A_6$ and  
\[\Big|\frac{\partial (T_{-1/\gb}\circ L_\gb \circ T_{1/\gb})}{\partial\gb}(w)\Big | \leq A_6.\]
\end{itemize}
\end{lem}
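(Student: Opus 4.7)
My plan is to derive both estimates from Lemma~\ref{L:horizontal-estimate-lift} (infinitesimal $\gb$-dependence of the lift $F_\gb$), via the same decomposition $L_\gb = H_\gb \circ G_\gb^{-1}$ and the quasi-conformal comparison $\Omega = G_{\gb'} \circ G_\gb^{-1}$ used in the proofs of Lemmas~\ref{L:vertical-direction} and~\ref{L:horizontal-direction}. The guiding observation is that the logarithmic-in-$\gb$ factor appearing in the proof of Lemma~\ref{L:horizontal-direction} came only from integrating the dilatation of $\Omega$ over the whole cylinder $\CC/\D{Z}$; if instead one integrates over a \emph{fixed bounded} region of the plane, the resulting bound is $O(|\gb-\gb'|)$, which after dividing and passing to the limit gives the $O(1)$ derivative estimates demanded here.

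For Part~1, fix $w$ with $|w|\le A_6'$ and $\re w>1/4$, and compare $L_\gb$ with $L_{\gb'}$ through
\[
L_{\gb'}(w)-L_\gb(w) \;=\; \bigl[H_{\gb'}(G_{\gb'}^{-1}(w))-H_{\gb'}(G_\gb^{-1}(w))\bigr] \;+\; \bigl[H_{\gb'}(G_\gb^{-1}(w))-H_\gb(G_\gb^{-1}(w))\bigr].
\]
The second bracket is $O(|\gb-\gb'|)$ by Lemma~\ref{L:horizontal-estimate-lift} applied at the single point $G_\gb^{-1}(w)$ (noting the explicit form of $H_\gb$ on a line). For the first bracket I estimate $|G_{\gb'}^{-1}(w)-G_\gb^{-1}(w)|$ by applying Pompeiu's representation to $\Omega-\mathrm{id}$ on a disk around $G_\gb^{-1}(w)$ whose radius is bounded below uniformly in $\gb$: the constraint $\re w>1/4$ keeps $G_\gb^{-1}(w)$ away from the singular line $\re\gz=0$, while $|w|\le A_6'$ combined with $\gb\le\ga_*$ keeps it away from $\re\gz=1/\gb$. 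On such a disk the bound~\eqref{E:dilatation-of-Omega} yields $|\gm_\Omega|\le A_4|\gb-\gb'|\cdot O(1)$, and the ensuing area integral is $O(|\gb-\gb'|)$. Dividing by $|\gb-\gb'|$ and letting $\gb'\to\gb$ gives the first inequality. The bound on $\partial L_\gb^{-1}/\partial\gb$ is then obtained by differentiating the identity $L_\gb(L_\gb^{-1}(w))=w$ in $\gb$ and using the lower bound $|L_\gb'|\ge 1/K_{12}$ from Lemma~\ref{L:bound-on-L'}.

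For Part~2, I would first prove the absolute estimate $|L_\gb(w+1/\gb)-1/\gb|\le A_6$. The Koebe extension of $L_\gb$ in Lemma~\ref{L:bound-on-L'} places $w+1/\gb$ in the domain (for $\gb\le\ga_*$ small enough, given $|w|\le A_6'$), and since $F_\gb$ commutes with $T_{1/\gb}$, the conjugated map $\tilde L_\gb := T_{-1/\gb}\circ L_\gb\circ T_{1/\gb}$ satisfies the same Abel equation $\tilde L_\gb(\gz+1)=F_\gb(\tilde L_\gb(\gz))$ on the shifted strip $\re\gz<-\B{k}$. The normalization $\gt_\gb(L_\gb(1))=-4/27$ together with the asymptotic formulas of Lemma~\ref{L:basic-estimates-lift} (in particular the lower-end translation $\frac{1}{2\pi\gb\B{i}}\log f'(\gs_\gb)$, which after multiplying by $\gb$ is $O(1)$ and is precisely the correction matching the shift by $1/\gb$) pin down the absolute position and yield the boundedness. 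Once $\tilde L_\gb$ is controlled on the bounded region in question, the derivative estimate follows by replaying the Part~1 argument verbatim for $\tilde L_\gb$, applying Pompeiu's formula on a disk around $G_\gb^{-1}(w+1/\gb)$ of radius independent of $\gb$.

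The main obstacle will be establishing the absolute bound $|L_\gb(w+1/\gb)-1/\gb|\le A_6$ in Part~2, because the asymptotic translation $\ell_\gb$ may itself grow like $|\log\gb|$ and the bound requires verifying that the two asymptotic translations of $L_\gb$, at the left and right ends of its fundamental strip, differ by $1/\gb$ up to a \emph{bounded} error. All other steps are uniform-in-$\gb$ variants of the Green/Pompeiu computations already carried out earlier in the section.
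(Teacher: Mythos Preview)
Your plan for Part~1 is essentially the paper's: decompose $L_\gb=H_\gb\circ G_\gb^{-1}$, bound $|H_{\gb'}-H_\gb|$ pointwise via Lemma~\ref{L:horizontal-estimate-lift}, and bound $|G_{\gb'}^{-1}(w)-G_\gb^{-1}(w)|$ using that the dilatation of $\Omega=G_{\gb'}\circ G_\gb^{-1}$ is $O(|\gb-\gb'|)$ on a fixed compact set. One caveat: Pompeiu's formula on a disk does not close by itself, since the boundary integral involves $\Omega-\mathrm{id}$, which is what you are trying to control. The paper instead uses that $G_\gb([0])=[\gi]$ on the cylinder $\D C/\D Z$ and invokes the singular integral representation of normalized Beltrami solutions; this gives $|\Omega(\gx)-\gx|\le C|\gb-\gb'|$ on compact sets of the cylinder directly. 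With that tool in place, your argument and the paper's coincide.

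Part~2 has a genuine gap. ``Replaying Part~1 verbatim for $\tilde L_\gb$'' does not work, for exactly the reason you flag as the main obstacle but do not resolve. The model pair $(H_\gb,G_\gb)$ is normalized at the \emph{left} end of the strip (via $L_\gb(0)=\cp_\gb$, equivalently $G_\gb(0)=\gi$). If you write
\[
\tilde L_{\gb'}(w)-\tilde L_\gb(w)=\bigl[L_{\gb'}(w+1/\gb')-L_{\gb'}(w+1/\gb)\bigr]+\bigl[L_{\gb'}(w+1/\gb)-L_\gb(w+1/\gb)\bigr]+\bigl[1/\gb-1/\gb'\bigr],
\]
the first and third brackets are each of size $|1/\gb-1/\gb'|=|\gb-\gb'|/(\gb\gb')$, and their cancellation to order $O(|\gb-\gb'|)$ would require $|L_{\gb'}'-1|=O(\gb^2)$ near the real axis, which is far stronger than Lemma~\ref{L:bound-on-L'} provides. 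Equivalently, the point $G_\gb^{-1}(w+1/\gb)$ sits at real part $\sim 1/\gb$ from the normalization of $\Omega$, so any global integral argument re-introduces the $\log\gb$ loss.

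The paper's resolution is to build a \emph{separate} model map $\widehat H_\gb:[0,1]\times\D R\to\Dom F_\gb$ adapted to $\widehat L_\gb=T_{-1/\gb}\circ L_\gb\circ T_{1/\gb}$, defined so that $\widehat G_\gb:=\widehat L_\gb^{-1}\circ\widehat H_\gb$ is asymptotic to the identity at both ends of the cylinder (using that $L_\gb$ and $\widehat L_\gb$ share the same asymptotic translation $\ell_\gb$ at $+\B i\infty$). Concretely, for $|t|$ large one sets $\widehat H_\gb(s,t)=F_\gb^{-\gi}\circ L_\gb(s+t\B i)$ with $\gi$ a fixed integer chosen so that the image lands to the left of $-K_9$, and one quasi-conformally interpolates across a strip of bounded height. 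This construction gives both the absolute bound $|\widehat L_\gb(w)|\le A_6$ (from boundedness of $\widehat H_\gb$ on compacta and of $\widehat G_\gb-\mathrm{id}$ on the cylinder) and the $\gb$-Lipschitz estimate, by the same compact-set Beltrami argument as in Part~1 but now applied to $\widehat G_{\gb'}\circ\widehat G_\gb^{-1}$, whose normalization is at infinity rather than at the far-away left endpoint.
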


\begin{proof}
{\em Part 1):}
The idea of the proof is similar to a step in the proof of Lemma~\ref{L:horizontal-direction}, which uses 
the model map for the linearizing coordinate.  
Recall the model map $H_\gb:= H_{f_\gb}$ introduced in the proof of Lemma~\ref{L:vertical-direction}, and 
the decomposition $L_\gb=H_\gb\circ G_\gb^{-1}$.
By virtue of the uniform bounds on $|\partial v''_\gb/\partial \gb|$ and $|\partial F_\gb/\partial \gb|$ 
obtained in Lemmas~\ref{L:horizontal-direction} and \ref{L:horizontal-estimate-lift}, respectively, 
$|\partial H_\gb/\partial \gb|$ and $|\partial H_\gb^{-1}/\partial\gb|$ are uniformly bounded over 
the domain and the image of $H_\gb$, respectively. 

Recall that the norm of the complex dilation of the map $H_\gb$, $|\gm_{f_\gb}|$, 
is uniformly bounded from above by a constant $<1$, independent of $\gb$. 
Thus, the norm of the complex dilation of $G_\gb$, $|\gm(G_\gb)|= |\gm_{f_\gb}|$, 
is uniformly bounded from above by a constant $<1$.   
We have the normalization $G_\gb([0])=[0]$, for all $\gb$.
The norms of the complex dilatations of $G_\gb\circ G_\ga^{-1}$ and $G_\gb^{-1}\circ G_\ga$ 
are uniformly bounded from above by a universal constant times $|\gb-\ga|$, 
as in Equation~\eqref{E:dilatation-of-Omega}. 
By the classical results on the dependence of the solution of the Beltrami equation on the Beltrami coefficient, see \cite[Section 5.1]{AhBe60}, for $w$ on a given compact part of the cylinder, 
$|G_\gb\circ G_\ga^{-1}(w)-w|$ and $|G_\gb^{-1}\circ G_\ga(w)-w|$ are bounded from above 
by a uniform constant times $|\gb-\ga|$.
(The uniform constant depends only on the compact set.) 

Combining the above bounds, we have 
\begin{align*}
|L_\ga(w)-&L_\gb(w)| \\
&\leq|H_\ga\circ G_\ga^{-1}(w)-H_\ga\circ G_\gb^{-1}(w)|+|H_\ga\circ G_\gb^{-1}(w)-H_\gb\circ G_\gb^{-1}(w)|\\
&\leq \sup_{\ga, z} |\mathrm{D} H_\ga(z)| \cdot |G_\ga^{-1}(w)-G_\gb^{-1}(w)|+ |H_\ga(z)-H_\gb(z)|\\
& \leq \sup_{\ga, z} |\mathrm{D} H_\ga(z)| \cdot \sup_{\ga,w} |\mathrm{D} G_\ga^{-1}(w)| \cdot 
|w- G_\ga\circ G_\gb^{-1}(w)|+ |H_\ga(z)-H_\gb(z)|\\
&\leq A_7 |\ga-\gb|. 
\end{align*}
Similarly, 
\begin{align*}
|L_\ga^{-1}(L_\gb(w))-L_\gb^{-1}(L_\gb(w))|&=|L_\ga^{-1}(L_\gb(w))-w|\\
&\leq \sup_{z} |\mathrm{D} L_\ga^{-1}(L_\ga(z))|\cdot |L_\gb(w)-L_\ga(w)| \\
&\leq  K_{12} \cdot A_7 |\ga-\gb|. 
\end{align*}
\medskip

{\em Part 2):}\footnote{We wish to compare $L_\gb$ to a model map near the right end of the domain,
but, the issue here is that near $1/\gb$ we do not \textit{a priori} have a well behaving normalization 
for a model map, like $H_\gb(0)=\cp_\gb$.} 
Consider the conformal map $\widehat{L}_\gb:=T_{-1/\gb}\circ L_\gb \circ T_{1/\gb}$ defined on the strip 
$\re w\in [-1/\ga, -\B{k}]$.
Recall that $\Dom F_\gb \supseteq \D{C}\setminus B(\D{Z}/\gb, K_9)$ and $F_\gb$ is univalent on this complement, 
proved in Lemma~\ref{L:basic-estimates-lift}.
Since $F_\gb \circ T_{1/\gb}=T_{1/\gb} \circ F_\gb$ and $L_\gb\circ T_1= F_\gb\circ L_\gb$, 
we have $\widehat{L}_\gb\circ T_1= F_\gb\circ \widehat{L}_\gb$. 
Moreover, $\widehat{L}_\gb(-1/\gb)= \cp_\gb-1/\gb$, where $\cp_\gb$ is the critical point of $F_\gb$ with 
$L_\gb(0)= \cp_\gb$.
First we introduce a model map $\widehat{H}_\gb: [0,1]\times \D{R} \to \Dom F_\gb$ 
for $\widehat{L}_\gb$, that is appropriately normalized at infinity.  

Recall that $G_\gb$ has uniformly bounded dilatation and is normalized at $0$. 
It follows from basic properties of quasi-conformal mappings that $|G_\gb(z)|\geq h'$ for every $z$ with 
$|z|\geq h$, where $h'$ depends only on $h$ and the dilatation of the map, and $h'\to +\infty$ as $h\to +\infty$. 
This implies the same property for the map $L_\gb$. 
Combining with the estimates in Lemma~\ref{L:basic-estimates-lift}, there is $t_0\in \D{R}$ and 
an integer $\gi$, both depending only on $\QIS$, such that for all $\gb\in (0,\ga_*)$ we have 
\[\forall x\in \{t_0\B{i}, 1+t_0\B{i}, -t_0\B{i},1-t_0\B{i}\}, \re F_\gb^{-\gi}(L_\gb(x)) \leq -K_9,\] 
and for all $w$ with $\re w\in [0,1]$ and $|\im w|\geq t_0$, $F_\gb^{-\gi}(L_\gb(w))$ is defined.

Fix $\gb\in (0,\ga_*)$. 
For $|t|\geq t_0+1$, define $\widehat{H}_\gb(s,t):= F_\gb^{-\gi}\circ L_\gb(s+t\B{i})$. 
For $|t|\leq t_0$, let 
\[\gga(t):=(\frac{t_0-t}{2t_0})F_\gb^{-\gi}(L_\gb(-t_0\B{i}))+(\frac{t+t_0}{2t_0})F_\gb^{-\gi}(L_\gb(t_0\B{i})),\]
and then define  
$\widehat{H}_\gb(s,t):=(1-s)\gga(t)+ s F_\gb (\gga(t)).$

On the two remaining squares $[0,1]\times [t_0, t_0+1]$ and $[0,1]\times [-t_0-1, -t_0]$ we quasi-conformally 
interpolate these maps. 
For the top square, consider the curves 
\begin{gather*}
\gh_1(s):=\widehat{H}_\gb(s, t_0+1), \tfor s\in [0,1]\\
\gh_2(s):=(1-s)\widehat{H}_\gb(0,t_0)+ s \widehat{H}_\gb(1, t_0+1), \tfor s\in [0,1].\\
\gh_3(s):=(1-s) \gh_2(0) + s \gh_1(0), \tfor s\in [0,1] 
\end{gather*}
By Lemmas~\ref{L:basic-estimates-lift} and \ref{L:bound-on-L'}, for $t_0$ large enough, 
depending only on $\QIS$, the curves $\gh_1$ and $\widehat{H}_\gb([0,1], t_0)$ are nearly horizontal, 
and $\gh_2$ has slope near one.  
In particular, for big enough $t_0$, $\gh_1$ and $\gh_2$ intersect only at their end points $\gh_1(1)=\gh_2(1)$, 
and $\gh_2$ and $\widehat{H}_\gb([0,1], t_0)$ intersect only at their start point $\gh_2(0)=\widehat{H}_\gb(0,t_0)$.

For $s\in [0,1]$ and $t\in (t_0+s, t_0+1)$, let 
\[\widehat{H}_\gb(s,t):= (\frac{t-t_0-s}{1-s})\gh_1(s) + \frac{t_0-t+1}{1-s}\gh_2(s).\]  
For $t\in [t_0, t_0+1]$, $s\in [t-t_0, 1]$, let
\[\widehat{H}_\gb(s,t):=(\frac{s-1}{t-t_0-1})\gh_2(t-t_0) + \frac{t-t_0-s}{t-t_0-1} F_\gb(\gh_3(t-t_0)).\] 
One can extend the map onto the lower square in a similar fashion. 
Note that the four points $F_\gb^{-\gi}(L_\gb(x))$, for $x\in\{-t_0\B{i},1-t_0\B{i},-(t_0+1)\B{i},1-(t_0+1)\B{i}\}$
tend to the vertices of a parallelogram as $t_0$ tends to $+\infty$. 

By definition, $\widehat{H}_\gb$ is a quasi-conformal homeomorphism onto its image and satisfies 
$\widehat{H}_\gb(1, t)= F_\gb\circ \widehat{H}_\gb(0,t)$, for all $t\in \D{R}$.  
Moreover, by Lemma~\ref{L:horizontal-estimate-lift} and the uniform bound in Part $1$, on a given compact part 
of the cylinder $|\partial \widehat{H}_\gb/\partial \gb|$ is uniformly bounded above, and moreover, by 
Lemma~\ref{L:basic-estimates-lift} and \ref{L:bound-on-L'}, $|\partial \widehat{H}_\gb/\partial z|$ is uniformly bounded over the whole cylinder. 
Indeed, one can obtain simple formulas for the partial derivatives of $\widehat{H}_\gb$ in terms of 
linear combination of the partial derivatives of $F_\gb$ and $L_\gb$, similar to the ones in 
Equation~\eqref{E:partials-of-H}. 

Define $\widehat{G}_\gb(s+t\B{i}):=\widehat{L}_\gb^{-1} \circ \widehat{H}_\gb(s,t)$. 
We have, $\widehat{G}_\gb(1+t\B{i})=1+\widehat{G}_\gb(t\B{i})$, that is,   
$\widehat{G}_\gb$ induces a well-defined map of the cylinder. 
The complex dilatation of $\widehat{G}_\gb$ is zero outside $[0,1]\times [-t_0-1,t_0+1]$.
Furthermore, as both $L_\gb$ and $\widehat{L}_\gb$ have the same asymptotic translation constant at infinity, we have $|\widehat{G}_\gb(z)-z|\to 0$ modulo $\D{Z}$, as $|\im z|$ tends to $+\infty$ on the cylinder. 
Hence, $\widehat{G}_\gb \circ \widehat{G}_\ga^{-1}$ is asymptotically identity near the top end of the cylinder, 
with complex dilation bounded by a uniform constant times $|\gb-\ga|$. 
This implies that on a given compact part of the cylinder, $|\widehat{G}_\gb\circ \widehat{G}_\ga^{-1}(w)-w|$ 
is bounded by a  uniform constant times $|\gb-\ga|$. 

Now the same steps as in Part $1$ prove the desired bound in the lemma at points $w$ with $\widehat{L}_\gb(w)$ 
in the image of $\widehat{H}_\gb$. 
For the points $w$ given in the lemma, one may iterate $\widehat{L}_\gb(w)$ forward or backward 
by $F_\gb$ (uniformly) bounded number of times to fall into the image of $\widehat{H}_\gb$. 
Then, the bound at $w$ follows from the uniform bounds on the partial derivatives of $F_\gb$ in 
Lemma~\ref{L:horizontal-estimate-lift}.   

The uniform bound on $|\widehat{L}_\gb(w)|$ follows from the ones for $\widehat{H}_\gb$ and 
$\widehat{G}_\gb$. 
\end{proof}

\begin{lem}\label{L:partial-of-parabolic-renormalization-lift}
$\forall A_8'$, $\exists A_8$ such that $\forall \gb\in (0,\ga_*]$ and $\forall w\in Z_\gb-1/\gb$ with 
$|\im w| \leq A_8'$
\[|\frac{\partial ({E}_\gb\circ T_{1/\gb})}{\partial \gb}(w)|\leq A_8.\] 
\end{lem}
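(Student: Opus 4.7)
The plan is to decompose $E_\beta\circ T_{1/\beta}$ into a three-fold composition whose $\beta$-derivatives have already been estimated in the preceding lemmas, and then differentiate by the chain rule. By Lemma~\ref{L:buffer} we have the identity $E_\beta = L_\beta^{-1}\circ (F_\beta^{k_\beta}-1/\beta)\circ L_\beta$ on $\gF_\beta(S_\beta)$. Setting $\widehat{L}_\beta := T_{-1/\beta}\circ L_\beta\circ T_{1/\beta}$ as in Lemma~\ref{L:partials-of-L}, and using that $F_\beta$ commutes with $T_{1/\beta}$, a direct substitution shows
\[E_\beta\circ T_{1/\beta}(w) \;=\; L_\beta^{-1}\bigl(F_\beta^{k_\beta}(\widehat{L}_\beta(w))\bigr),\]
for every $w\in Z_\beta-1/\beta$. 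This is the key algebraic reduction; the rest is a bookkeeping exercise.

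Next, I would differentiate this three-fold composition with respect to $\beta$. The chain rule produces three terms: (i) $\frac{\partial L_\beta^{-1}}{\partial \beta}$ evaluated at $F_\beta^{k_\beta}(\widehat{L}_\beta(w))$; (ii) $(L_\beta^{-1})'$ times $\frac{\partial F_\beta^{k_\beta}}{\partial \beta}$ evaluated at $\widehat{L}_\beta(w)$; and (iii) $(L_\beta^{-1})'\cdot (F_\beta^{k_\beta})'$ times $\frac{\partial \widehat{L}_\beta}{\partial \beta}(w)$. Part~2 of Lemma~\ref{L:partials-of-L} bounds the $\partial_\beta$-derivative in (iii), Lemma~\ref{L:buffer} bounds the one in (ii) on $X_\beta$, and Part~1 of Lemma~\ref{L:partials-of-L} bounds (i). The spatial derivatives appearing as multipliers are uniformly controlled via Lemma~\ref{L:bound-on-L'} (for $(L_\beta^{\pm 1})'$) and via iteration of Lemma~\ref{L:basic-estimates-lift}-1 together with Koebe distortion (for $(F_\beta^{k_\beta})'$, recalling $k_\beta\leq K_7$ by Lemma~\ref{L:turning}).

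The only delicate step, and the real obstacle, is verifying that the three evaluation points actually fall inside the domains to which those lemmas apply. For $w\in Z_\beta-1/\beta$ with $|\im w|\leq A_8'$, Part~2 of Lemma~\ref{L:partials-of-L} gives that $|\widehat{L}_\beta(w)|$ is bounded by a constant depending only on $A_8'$, and in particular this point lies in $X_\beta$, as required for Lemma~\ref{L:buffer}. The intermediate iterates $F_\beta^j(\widehat{L}_\beta(w))$ for $1\leq j\leq k_\beta$ remain in $\mathrm{Dom}\,F_\beta$ by the very construction of $X_\beta$ in Lemma~\ref{L:buffer}, and since each $F_\beta$ acts approximately as $T_1$ (Lemma~\ref{L:basic-estimates-lift}-1) while $k_\beta\leq K_7$, the final image $F_\beta^{k_\beta}(\widehat{L}_\beta(w))$ has bounded modulus with $\re>1/4$, so Part~1 of Lemma~\ref{L:partials-of-L} is applicable. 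Choosing $A_6'$ in terms of $A_8'$, and combining the above bounds linearly, yields the desired constant $A_8$, which depends only on $A_8'$ and $\QIS$.
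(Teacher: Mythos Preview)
Your decomposition $E_\beta\circ T_{1/\beta}=L_\beta^{-1}\circ F_\beta^{k_\beta}\circ \widehat{L}_\beta$ and the subsequent chain-rule breakdown into three terms is exactly the paper's approach, and your domain-tracking is almost all correct.

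There is one slip in the verification paragraph. The set $X_\beta=B\bigl(L_\beta(\Phi_\beta(S_\beta)),A_2\bigr)$ is centred at points with real part of order $1/\beta$ (since $\Phi_\beta(S_\beta)$ sits near the right end of the strip and $L_\beta$ is close to a translation). Hence $\widehat{L}_\beta(w)=L_\beta(w+1/\beta)-1/\beta$ lies in $X_\beta-1/\beta$, a bounded set near the origin, \emph{not} in $X_\beta$. So you cannot literally invoke Lemma~\ref{L:buffer}'s ``Moreover'' clause at $\widehat{L}_\beta(w)$. The fix is immediate and you already have the ingredients: since $k_\beta\le K_7$ and $|F_\beta-T_1|\le 1/4$, the iterates $F_\beta^{\,j}(\widehat{L}_\beta(w))$ for $0\le j\le k_\beta$ remain in a fixed bounded region, on which Lemma~\ref{L:horizontal-estimate-lift} bounds $\partial_\beta F_\beta$ uniformly; iterating via the chain rule then gives a uniform bound on $\partial_\beta F_\beta^{k_\beta}$ at $\widehat{L}_\beta(w)$. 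This is precisely why the paper cites Lemma~\ref{L:horizontal-estimate-lift} alongside Lemma~\ref{L:buffer} at this step.
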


\begin{proof}
Given $A_8'$, we may choose $A_6'$ so that for every $w\in Z_\gb$ with $|\im w| \leq A_8'$ we have 
$w\in B(1/\gb, A_6')$ and $F_\gb\co{k_\gb}(L_\gb(w))\in B(1/\gb, A_6')$. 
The latter follows from the uniform bound on $\widehat{L}_\gb(w-1/\gb)$ in Lemma~\ref{L:partials-of-L}-2, and 
Lemma~\ref{L:basic-estimates-lift}. 
Then, at $w\in Z_\gb-1/\gb$, using $F_\gb \circ T_{1/\gb} =T_{1/\gb}\circ F_\gb$, 
\begin{align*}
\frac{\partial ({E}_\gb\circ T_{1/\gb})}{\partial \gb}(w)&=
\frac{\partial (L_\gb^{-1}\circ\wt{E}_\gb \circ L_\gb \circ T_{1/\gb})}{\partial \gb}(w)\\
&=\frac{\partial (L_\gb^{-1} \circ F_\gb\co{k_\gb} \circ (T_{-1/\gb} \circ L_\gb \circ T_{1/\gb}))}{\partial \gb}(w).
\end{align*}
Given $w$ as in the lemma, we have 
\[T_{-1/\gb} \circ L_\gb \circ T_{1/\gb}(w)= \widehat{L}_\gb(w)\in B(0, A_6),  
F_\gb \co{k_\gb}(T_{-1/\gb} \circ L_\gb \circ T_{1/\gb}(w))\in B(0, A_6').\]
Now the uniform bound in the lemma follows from the bounds in Lemmas~\ref{L:horizontal-estimate-lift},
\ref{L:bound-on-L'}, \ref{L:buffer}, and \ref{L:partials-of-L}. 
\end{proof}

\begin{proof}[Proof of Proposition~\ref{P:psi-derivative}]
Projecting ${E}_\ga\circ T_{1/\ga}$ via $\ex$ onto a neighborhood of $0$, we obtain $P \circ \gy_\ga^{-1}$.    
Since $|\ex'(w)|$ is uniformly bounded from above and below on $\{w\in \D{C}; |\im w |\leq A_8\}$, 
\[\forall z\in \gy_\ga(W\setminus V)\cdot e^{-2\pi \frac{1}{\ga}\B{i}}, \; 
|\frac{\partial  ( P\circ \gy_\ga^{-1})}{\partial\ga}(z)| \leq  A_9\]
On the other hand, $|P'|$ and $|\gy_\gb'|$ are uniformly bounded from above and away from zero 
on $W\setminus V$. 
Hence, differentiating with respect to $\ga$, we obtain the estimate in the proposition at points in 
$W\setminus V$. 
Since $\partial \gy_\ga/\partial \ga$ is holomorphic in $z$, for each fixed $\ga$, by the maximum 
principle, the uniform bound holds on $W$.
\end{proof}

Fix $\ga\in (0,\ga^*]$ and for $\gb\in (0,\ga^*]$ define the univalent map 
\[\gO_\gb:=\gy_\gb\circ \gy_\ga^{-1}: V_\ga\to V_\gb.\] 
The \textit{Schwarzian derivative} of $\gO_\gb$ on $V_\ga$ is defined as
\[\SD\gO_\gb:=\Big(\frac{\gO_\gb''}{\gO_\gb'}\Big)' -\frac{1}{2}\Big(\frac{\gO_\gb''}{\gO_\gb'}\Big)^2
.\]
Since each $\gO_\gb$ has univalent extension onto $U_\ga:= \gy_\ga(U)$, the above quantity is uniformly 
bounded on $V_\ga$ independently of $\gb$ and $f_0$, by Koebe distortion Theorem. 
The value of this derivative measures the deviation of $\gO_\gb$ from the M\"obius transformations at points 
in $U_\ga$. 
Let $\gh_\ga$ denote the Poincar\'e density of $V_\ga$, that is, $\gh(z) |dz|$ is a complete metric of constant $-1$ 
curvature on $V_\ga$.
The hyperbolic sup-norm of  $\SD\gO_\gb$ on $V_\ga$ is defined, see \cite{Leh87}, as 
\[\| \SD\gO_\gb\|_{V_\ga}:=\sup_{z\in V_\ga} |\SD \gO_\gb(z)|\gh_\ga(z)^{-2}.\]

\begin{lem}\label{L:linear-schwarzian}
There exists a constant $A_9$, depending only on $A_1$ and the domains $W\supset V$,  
such that for all $\ga, \gb\in (0,\ga_*]$ we have 
\[\|\SD \gO_\gb\|_{V_\ga}\leq A_9 |\gb-\ga|.\]
\end{lem}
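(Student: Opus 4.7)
The plan is to transfer the uniform bound $|\partial\gy_t/\partial t|\leq A_1$ from Proposition~\ref{P:psi-derivative} into a bound on the Schwarzian of $\gO_\gb$ via Cauchy's integral formula, and then to convert the resulting Euclidean estimate into a hyperbolic one.

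First I would observe that $\gO_\gb=\gy_\gb\circ\gy_\ga^{-1}$ extends univalently to $\gy_\ga(U)$ and that, for $z\in\gy_\ga(W)$, integration along the real segment from $\ga$ to $\gb$ yields
\[\gO_\gb(z)-z=\gy_\gb\bigl(\gy_\ga^{-1}(z)\bigr)-\gy_\ga\bigl(\gy_\ga^{-1}(z)\bigr)=\int_\ga^\gb \frac{\partial\gy_t}{\partial t}\bigl(\gy_\ga^{-1}(z)\bigr)\,dt,\]
so $|\gO_\gb(z)-z|\leq A_1|\gb-\ga|$ on $\gy_\ga(W)$ by Proposition~\ref{P:psi-derivative}. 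Applying Koebe's distortion theorem to the normalized univalent map $\gy_\ga\colon U\to\D{C}$ (satisfying $\gy_\ga(0)=0$, $\gy_\ga'(0)=1$), there is $\gd_0>0$ and $R_0>0$ depending only on $V\Subset W\Subset U$ such that for every $\ga\in(0,\ga_*]$ and every $z\in V_\ga$ one has $B(z,\gd_0)\ci\gy_\ga(W)\ci B(0,R_0)$.

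Next I would apply Cauchy's integral formula on these disks $B(z,\gd_0)$ to the holomorphic function $\gO_\gb-\operatorname{id}$ to obtain, for every $z\in V_\ga$,
\[|\gO_\gb'(z)-1|,\;|\gO_\gb''(z)|,\;|\gO_\gb'''(z)|\leq C_1|\gb-\ga|,\]
for some $C_1$ depending only on $A_1$ and $\gd_0$. Since $\gO_\gb$ is univalent on $\gy_\ga(U)$ with $\gO_\gb(0)=0$ and $\gO_\gb'(0)=1$, a second application of Koebe gives a uniform lower bound $|\gO_\gb'(z)|\geq c_0>0$ on $V_\ga$. Substituting these into the Schwarzian formula $\SD\gO_\gb=\gO_\gb'''/\gO_\gb'-\tfrac{3}{2}(\gO_\gb''/\gO_\gb')^2$ and using $|\gb-\ga|^2\leq\ga_*|\gb-\ga|$ to absorb the quadratic term produces the pointwise Euclidean estimate $|\SD\gO_\gb(z)|\leq C_2|\gb-\ga|$ on $V_\ga$.

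Finally, since $V_\ga\ci B(0,R_0)$, monotonicity of the hyperbolic density together with the Schwarz lemma give $\gh_\ga(z)\geq\gh_{B(0,R_0)}(z)\geq 2/R_0$, hence $\gh_\ga(z)^{-2}\leq R_0^2/4$. Multiplying the previous bound by $\gh_\ga^{-2}$ yields the desired inequality with $A_9=C_2R_0^2/4$. The heavy lifting has already been performed in Proposition~\ref{P:psi-derivative}; the argument above is essentially Cauchy-estimate bookkeeping, and the only point requiring mild care is that the geometric constants $\gd_0$, $R_0$, $c_0$ are uniform in $\ga$ and in the base map $f_0$, which is precisely ensured by the normalization $\gy_\ga(0)=0$, $\gy_\ga'(0)=1$ through Koebe's distortion theorem.
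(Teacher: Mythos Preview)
Your proof is correct and follows essentially the same idea as the paper's: integrate the bound from Proposition~\ref{P:psi-derivative} to control $\gy_\gb-\gy_\ga$, apply Cauchy estimates to bound the first three derivatives, and feed these into the Schwarzian formula together with a Koebe lower bound on the first derivative.

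There is one minor but genuine difference worth recording. The paper pulls everything back to the fixed domain $V$ via the Schwarzian cocycle identity $\|\SD\gO_\gb\|_{V_\ga}=\|\SD\gy_\ga-\SD\gy_\gb\|_V$, so all Cauchy radii and derivative bounds live on a domain independent of $\ga$. You instead work directly on the moving domain $V_\ga$ and compensate by invoking Koebe distortion to produce a uniform Cauchy radius $\gd_0$, a uniform containment $V_\ga\subset B(0,R_0)$, and a uniform lower bound $c_0$ on $|\gO_\gb'|$. Your route avoids the cocycle identity at the price of this extra Koebe bookkeeping; the paper's route avoids the bookkeeping at the price of quoting the cocycle identity. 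Both are short and both give the same linear dependence on $|\gb-\ga|$.
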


\begin{proof}
Choose a positive constant $r$ such that for all $z\in V$, $B(z,r)\ci W$.
Then, from the estimate in Lemma~\ref{P:psi-derivative}, and the Cauchy integral formulas for the 
derivatives, on the set $V$
\[\Big|\frac{\partial(\gy_\ga-\gy_\gb)}{\partial z}\Big|, \Big|\frac{\partial^2 (\gy_\ga-\gy_\gb)}{\partial z^2}\Big|, 
\Big|\frac{\partial^3 (\gy_\ga-\gy_\gb)}{\partial z^3}\Big|\leq A_{10}|\ga-\gb|.\]

On the other hand, from the definition of Schwarzian derivative (or see \cite{Leh87}, Section II.1.3), 
\[\|\SD \gO_\gb\|_{V_\ga}= \|\SD\gy_\ga-\SD\gy_\gb\|_V.\]
Recall that $|\gy_\ga'|$ and $|\gy_\gb'|$ are uniformly bounded away from zero, by Koebe's distortion theorem.
Thus, combining the above estimates one obtains the desired inequality.
\end{proof}

The sets $V_\ga$ are uniformly quasi-disks depending only on the modulus of $U\setminus V$. 
It is a classical result in Teichm\"uller Theory, see \cite{Ahl63} or \cite[Chaper 2, Theorem 4.1]{Leh87}, that 
there exist constants $A_{11}$ and $A_{11}'$, depending only on $V$ and $U$, such that provided $\|\SD\gO_\gb\|_{V_\ga}\leq A_{11}'$,  $\gO_\gb:V_\ga\to V_\gb$ has a quasi-conformal 
extension onto $\D{C}$ with complex dilatation less than $A_{11} \|\SD\gO_\gb\|_{V_\ga}$.
By the definition of the Teichm\"uller distance, it is easy to deduce from this the following corollary 
that finishes the proof of the proposition.
\begin{cor}
There exists a constant $A_{12}$, depending only on $\QIS$, such that for all $\ga, \gb\in (0,\ga_*]$, we have 
\[\Td(\pi\circ \C{R}(f_\ga), \pi \circ \C{R}(f_\gb))\leq A_{12} |\ga-\gb|. \]
\end{cor}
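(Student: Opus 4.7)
The plan is to convert the Schwarzian bound of Lemma~\ref{L:linear-schwarzian} into a Teichm\"uller distance bound, using the quasi-conformal extension theorem for univalent maps with small Schwarzian (Ahlfors--Lehto) recalled in the paragraph just above the corollary, and then read off the inequality from the definition of $\Td$.

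Starting from $\pi\circ\C{R}(f_\ga)=P\circ\gy_\ga^{-1}$, the definition of $\Td$ on $\IS_0$ gives
\[
\Td(\pi\circ\C{R}(f_\ga),\pi\circ\C{R}(f_\gb))\leq \log \Dil(\hat\gy_\gb\circ\hat\gy_\ga^{-1})
\]
for any pair of quasi-conformal extensions of $\gy_\ga,\gy_\gb$ to $\D{C}$. So it is enough to exhibit a quasi-conformal extension of $\gO_\gb=\gy_\gb\circ\gy_\ga^{-1}:V_\ga\to V_\gb$ to the whole plane with dilatation $1+O(|\gb-\ga|)$: composing such an extension with any fixed qc extension $\hat\gy_\ga$ promotes it to an admissible $\hat\gy_\gb$, and directly yields the required bound on $\log\Dil(\hat\gy_\gb\circ\hat\gy_\ga^{-1})$.

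First I would set $\gep_0:=A_{11}'/A_9$. For $|\gb-\ga|\leq\gep_0$, Lemma~\ref{L:linear-schwarzian} gives $\|\SD\gO_\gb\|_{V_\ga}\leq A_9|\gb-\ga|\leq A_{11}'$, so the Ahlfors--Lehto theorem furnishes an extension of $\gO_\gb$ to $\D{C}$ with complex dilatation $k\leq A_{11}A_9|\gb-\ga|$. Shrinking $\gep_0$ further if needed so that $k\leq 1/2$, the elementary inequality $\log\frac{1+k}{1-k}\leq 3k$ on $[0,1/2]$ yields $\log\Dil\leq 3A_{11}A_9|\gb-\ga|$, which is the desired linear bound in this regime.

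It remains to treat pairs with $|\gb-\ga|>\gep_0$. Here I would invoke precompactness: by Theorem~\ref{Ino-Shi2}(a) each $\gy_\ga$ extends univalently to the fixed Jordan domain $U\Supset V$, so Koebe's distortion theorem gives uniform control of $\gy_\gb\circ\gy_\ga^{-1}$ on a neighborhood of $\overline{V_\ga}$, and thence a uniform upper bound $M$ for $\Td(\pi\circ\C{R}(f_\ga),\pi\circ\C{R}(f_\gb))$ on $(0,\ga_*]^2$. Then $M\leq(M/\gep_0)|\gb-\ga|$ on this second range, and $A_{12}:=\max(3A_{11}A_9,\,M/\gep_0)$ works. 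I do not expect any serious obstacle: the real content lies upstream in Proposition~\ref{P:psi-derivative} and Lemma~\ref{L:linear-schwarzian}, and the corollary is essentially bookkeeping of the Ahlfors--Lehto output together with the routine passage from complex dilatation to $\log\Dil$, the only mild subtlety being the splitting into small and large $|\gb-\ga|$.
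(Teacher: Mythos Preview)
Your proposal is correct and matches the paper's intended approach: the paper simply states that the corollary follows ``by the definition of the Teichm\"uller distance'' from Lemma~\ref{L:linear-schwarzian} and the Ahlfors--Lehto extension result quoted just before, and you have spelled out exactly that deduction. One small remark: for the regime $|\gb-\ga|>\gep_0$ it is slightly cleaner to subdivide $[\ga,\gb]$ into at most $\lceil |\gb-\ga|/\gep_0\rceil$ pieces of length $\leq\gep_0$ and apply the small-case bound together with the triangle inequality for $\Td$, which gives the same linear estimate without invoking a separate uniform bound $M$; but your precompactness argument also works, since each $\gy_\ga$ has a quasi-conformal extension by the very definition of $\IS_0$.
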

\subsection{Upper bounds on the size of Siegel disks}\label{SS:size-of-Siegels}

\begin{proof}[Proof of Proposition \ref{P:Uniformly-bounded}]
We shall show that there exists a constant $A_{13}$ such that for every $f\in \QIS$ there exists a sequence of points 
$z_n$, $n=0,1,2,\dots$ in the forward orbit of the unique critical value of $f$ satisfying the inequality 
\[\log d(0,z_n)\leq A_{13}+\sum_{i=0}^{n} \gb_{i-1}\log \ga_{i}
.\]  
Assuming this for a moment, if $z$ is an accumulation point of this sequence, 
it must lie outside of the Siegel disk of $f$ and $\log d(0,z)\leq A_{13}-B(\ga(f))$.
By $1/4$-Theorem, we obtain the desired inequality $\log r(f)+B(\ga(f))\leq A_{13}+\log 4$.  

For $n\geq 1$, let $\gz_n:= \lfloor (1/\ga_n-\B{k})/2\rfloor$, where $\lfloor \cdot \rfloor$ denotes the largest integer less 
than $\cdot$, then inductively define the sequence  
$\gz_{n-1}, \gz_{n-2}, \dots, \gz_0$ so that for $i=0,1,2\dots, n-1$, 
\[|\re (\gz_i)-(1/\ga_i-\B{k})/2|\leq 1/2, \tand \ex(\gz_{i})=\Phi_{i+1}^{-1}(\gz_{i+1})
,\]   
where, $\ex$ is defined in Equation \eqref{E:ex} and $\gF_{i}$ is the perturbed Fatou coordinate of the map 
$f_i:=\C{R}^{i}(f)$. 
Then, define $z_n:= \gF_0^{-1}(\gz_0)$. 

By an inductive argument we prove that each $\gF_i^{-1}(\gz_i)$ is in the forward orbit of the critical value of 
$f_i$, $-4/27$. 
For $i=n$, it is clear; $\gF_n(-4/27)=1$, and therefore, by 
the Abel functional equation for $\gF_n$, $\gF_n^{-1}(\gz_n)$ is in the forward orbit of $-4/27$.  
Choose the smallest positive integer $t_n$ with $f_n\co{t_n}(-4/27)=\gF_n^{-1}(\gz_n)$. 

To prove the statement for $i=n-1$, first note that by the definition of the renormalization there are
$x_i\in \gF_{n-1}(S_{f_{n-1}})$ and $ y_i\in \gF_{n-1}(\C{C}_{f_{n-1}}\u \Csh_{f_{n-1}} )$, for $ i=0,1,2,\dots,t_n-1$
such that 
\begin{gather*}
\ex(x_i)= f_n\co{i}(-4/27),\; \ex(y_i)=f_n\co{i+1}(-4/27), \\
\gF_{n-1}\circ f_{n-1}^{k_{n-1}}  \circ \gF_{n-1}^{-1}(x_i)=y_i. 
\end{gather*}
Choose integers $s_i$, so that $x_0=s_0-1$, $y_{i-1}+s_i=x_i$, for $i=0,1,2,\dots, t_n-1$, and 
$\gz_{n-1}=y_{t_n-1}+s_{t_n}$. 
This implies that iterating $-4/27$ by $f_{n-1}$, $s_0+s_1+\dots+s_{t_n}+t_nk_{n-1}$ number of times, we reach 
$\gF_{n-1}^{-1}(\gz_{n-1})$.  
One continues this process to reach level zero.

Next, we estimate the size of $z_n$. 
To do this we first show that there exists a constant $A_{14}$, depending only on 
$\QIS$, such that for all $i=n,n-1,\dots, 0$  
\begin{gather}\label{E:recursive-heights}
\im \gz_{i-1}\geq \ga_i \im \gz_i-\log \ga_i - A_{14}.
\end{gather}
Recall the maps $H_f$ and $G_f$ defined in the proof of Lemma~\ref{L:vertical-direction}, and choose an integer 
$t_i$ such that $\gz_i+t_i\in G_{f_i}([0,1]\times \D{R})$. 
We have $L_{f_i}(\gz_i)=  F_{f_i}\co{(-t_j)}\circ H_{f_i}  \circ G_{f_i}^{-1} (\gz_i+t_i)$. 
By Sublemma~\ref{SL:log-bound}, $\im H_{f_i}  \circ G_{f_i}^{-1} (\gz_i+t_i)$ is uniformly bounded 
from below by $-A_{15}(1-\log \ga_i)$. 
On the other hand, one can conclude from Lemma~\ref{L:basic-estimates-lift}-2 (by adding the errors along 
the orbit) that $|\im (F_{f_i}\co{k} (w)-w)|\leq A_{16}(1-\log \ga_i)$, for every $w$ with 
$\im w\geq -A_{15}(1-\log \ga_i)$ and every positive integer $k\leq 1/\ga_i$. 
This implies that 
\[|L_{f_i}(\gz_i)-\gz_i|\leq A_{17} (1-\log \ga_i).\]
Now, Inequality \eqref{E:recursive-heights} results from looking at the explicit formula $\ex^{-1}\circ \gt_{f_i}$. 
In this calculation that is left to the reader, one uses that the size of $\gs_{f_i}$ is comparable to $\ga_i$, 
$\im \gz_i$ is uniformly bounded from below, and the $\log \ga_i$ in the above formula is multiplied by $\ga_i$ and 
contributes a bounded value.

Recursively putting Equation~\eqref{E:recursive-heights} together,  one obtains the desired inequality on the 
size of $z_n$.
\end{proof}

\begin{proof}[Proof of Proposition~\ref{prop:samelim}]
By the definition of $C(f)$, Proposition~\ref{P:Uniformly-bounded}, Equation~\eqref{E:log-translation-main} 
and Equation~\eqref{E:C(f)-expanded}, for positive $\ga$ we have
\[\lim_{\ga\to 0^+} \gU(z\mapsto f_0(\ea z))=\lim_{\ga\to 0^+} C(z\mapsto f_0(\ea z))=-\log |f''(0)|+\log (4\pi).\] 
When $\ga$ is negative, one considers the conjugate map $s\circ (z\mapsto f_0(\ea z)) \circ s$, where $s(z)=\ol{z}$, 
and obtains a family of the form $z\mapsto g_0(e^{-2\pi \ga\B{i}}z)$. 
Moreover, the class $\IS_0$ is closed under this conjugation, $|f_0''(0)|=|g_0''(0)|$, and the conformal radius of 
the Siegel disk is preserved under this conjugation. 
Hence, the equality for the left limit follows from the above one.  
\end{proof}
\section{H\"older}\label{sec:arn}

\subsection{Introduction}\label{subsec:map}

We continue to use the Riemannian metric $ds=\bigl|\log|x|\bigr| |dx|$ on the interval $[-1/2,1/2]$. 
The distance with respect to this metric between two points $x,y\in[-1/2,1/2]$ will be denoted $d_{\log}(x,y)$. 
The length of $[-1/2,1/2]$ for this metric is finite (it is equal to $1+\log 2$). 
This distance is H\"older continuous with respect to the Euclidean distance, for any exponent in $(0,1)$ 
(see Section~\ref{sss:bads}).

Recall our notations: $\IS_0$ is (a big enough subset of) the Inou-Shishikura class. 
Maps $f\in\IS_0$ have rotation number $0$ at $0$: $f(0)=0$ and $f'(0)=1$. 
Given a holomorphic map $f$ whose domain contains $0$, fixes $0$, and with $f'(0)\neq 0$, we denote
\[f = f_0 \circ R_{\alpha(f)}\]
with $f_0'(0)=1$ and
\[R_{\theta}(z)=e^{2\pi i \theta} z.\]
With the notation of Sections~\ref{sec:prelim} and~\ref{S:lipschitz}, $f_0 = \pi(f)$.
For $A\subset \R$, $\IS_{A}$ is the set of maps $f$ with $f_0\in\IS_0$ and $\alpha(f)\in A$.
In Section~\ref{sec:prelim}, we also introduced the family of quadratic polynomials $Q_\alpha$ that is closely 
related to $P_\alpha$:
\[P_\alpha = A_\alpha \circ Q_\alpha \circ A_\alpha^{-1},\quad A_\alpha(z) = (e^{2\pi i\alpha})^2\frac{27}{16}z.\]
In particular, with $r(f)$ denoting the conformal radius of the Siegel disk of $f$:
\[r(P_\alpha) = \frac{27}{16} r(Q_\alpha)\]

The renormalization $\cal R(f)$ was defined in Section~\ref{sec:prelim} for $\alpha(f)$ small and positive, 
more precisely, for all $f\in\IS_\alpha \cup \{Q_\alpha\}$ with $\alpha\in(0,\ga^*]$. 
We prefer here to work with $P_\alpha$ instead of $Q_\alpha$, so we define
\[\cal R(P_\alpha) := \cal R(Q_\alpha).\]
To extend the renormalization to maps with $\alpha\in[-\ga^*,0)$ we proceed as follows: let $s(z)=\ov{z}$; 
let us shorten $s\circ f\circ s^{-1}$ as $sfs^{-1}$ then $\alpha(sfs^{-1}) = -\alpha(f)\in (0,\ga^*]$; 
for $f\in\IS_\R$ with $\alpha(f)\in (-1/2,0)\cup(0,1/2)$, let 
\bEA && \cal S(f) := f\text{ if }\alpha(f)\in(0,1/2)\ \text{ and }
\\ && \cal S(f)  :=  sf s^{-1}\text{ if }\alpha(f)\in (-1/2,0).\eEA
The symbol $\cal S$ stands for \emph{saw}, because it acts on the rotation number as the saw map mentioned 
in Section~\ref{subsec:def}.
For $\alpha\in(-1/2,1/2)$, let also 
\[\cal S(Q_{\alpha}) := Q_{|\alpha|},\quad \cal S(P_{\alpha}) := P_{|\alpha|}.\]
Then the composition of $\cal RS$ is an extension of $\cal R$ to maps with $\alpha(f)\in [-\ga^*,0)\cup(0,\ga^*]$.
Let us denote $\cal{RS}_0(f) = (\cal{RS}(f))_0$, so that whenever $\alpha(f)\in [-\ga^*,0)\cup(0,\ga^*]$:
\[\cal{RS}(f) = \cal{RS}_0(f) \circ R_{1/|\alpha(f)|}.\]
Note also that
\[B(-\alpha) = B(\alpha),\quad r(sfs^{-1})=r(f),\quad \Upsilon(sfs^{-1})=\Upsilon(f)\]
thus
\[r(\cal S f)=r(f),\quad \Upsilon(\cal Sf)=\Upsilon(f),\]
and similarly
\[r(P_{-\alpha})=r(P_\alpha), \quad \Upsilon(P_{-\alpha}) = \Upsilon(P_\alpha), \quad 
r(\cal S P_\alpha) = r(P_\alpha), \quad \Upsilon(\cal S P_\alpha) =\Upsilon(P_\alpha).\]

Recall that the set $\HT_N$ is the set of irrational numbers whose modified continued fraction entries are 
all $\geq N$, and that $\IS_N := \IS_{\HT_N}$.
Note that for $N\geq 1/\ga^*$, $\HT_N\subset A$ with $A=[-\ga^*,0)\cup(0,\ga^*]$, and that maps $f$ in 
$\IS_N$ or in $\setof{P_\alpha}{\alpha\in\HT_N}$ are infinitely renormalizable for $\cal{RS}$.

Assume a distance function $d_T(f_0,g_0)$ has been defined\footnote{The subscript $T$ stands for Teich\"uller, 
but our proof is valid for any metric with the required properties stated here.} over $\IS_0$, has been extended 
to $\IS_\R$ as follows:
\[d_{\log}(f,g)=d_{\log}(\alpha(f),\alpha(g))+d_T(f_0,g_0)\]
and that the following holds:
\[f\in\IS_N \mapsto C(f) = \Upsilon(f)-|\alpha(f)|\Upsilon(\cal{RS}(f))\]
is bounded and is Lipschitz continuous over $\IS_N \cap [-1/2,1/2]$ with respect to the metric $d_{\log}$,
\[\alpha\in\HT_N \mapsto C(P_\alpha) = \Upsilon(P_\alpha)-|\alpha|\Upsilon(\cal{RS}(P_\alpha))\]
too. 
Note that 
\[C(\cal S f)=C(f)\text{ and }C(\cal S P_{\alpha}) = C(P_\alpha).\]
Assume that the transversal part of renormalization is Lipschitz continuous over $\IS_N$ with respect to the metric 
$d_{\log}$, and transversally contracting (see below). Assume also that $\Upsilon$ is bounded over $\IS_N$ 
(in \cite{BC04} it has been proved that $\Upsilon$ is bounded over $\Pol_\R$).
This sums up as follows: there exists constants $K_1$, $K_2$, $K_3$ depending only on $\QIS$ such that
\begin{enumerate}\setlength{\itemsep}{.5ex}
\item[.] $\forall f,g\in\cal IS_N$:
\item\label{item:it1} $\Upsilon(f)\leq K_0$
\item\label{item:it2} $|C(f)-C(g)| \leq K_1 d_{\log}(f,g)$
\item\label{item:it3} $|C(f)|\leq K_3$
\item\label{item:it4} If $\alpha(f)$ and $\alpha(g)$ have the same sign, then\\
    $d_T(\cal{RS}_0(f),\cal{RS}_0(g)) \leq  \lambda d_T(f_0,g_0) + K_2 |\alpha(f)-\alpha(g)|$
\item\label{item:it5} $\lambda<1$
\item[.] $\forall \alpha,\beta \in (-1/N,0)\cup(0,1/N)$:
\item\label{item:it6} $|C(P_\alpha)-C(P_\beta)| \leq K_1 d_{\log}(\alpha,\beta)$
\item\label{item:it7} $|C(P_\alpha)|\leq K_3$
\item\label{item:it8} If $\alpha$ and $\beta$ have the same sign, then\\ 
    $d_T(\cal{RS}_0(P_\alpha), \cal{RS}_0(P_\beta)) \leq K_2 |\alpha-\beta|$
\end{enumerate}

Under the above hypotheses, we will prove:

\begin{thm}\label{thm:map}
 $\exists A,B>0$ such that $\forall f,g \in \IS_N$,
\[|\Upsilon(f)-\Upsilon(g)| \leq A\,  | \alpha(f)-\alpha(g)|^{1/2} + B\, d_T(f_0,g_0)\]
and $\forall \alpha,\beta\in \HT_N$,
\[|\Upsilon(P_\alpha)-\Upsilon(P_\beta)| \leq A\, |\alpha(f)-\alpha(g)|^{1/2}.\]
\end{thm}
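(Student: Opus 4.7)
The plan is to expand $\Upsilon$ as a telescoping series via the renormalization cocycle and then compare the series for $f$ and $g$ term by term, splitting at an index $K$ determined by $\gd:=|\alpha(f)-\alpha(g)|$. Iterating the identity $\Upsilon(f)=C(f)+|\alpha(f)|\Upsilon(\cal{RS}(f))$ and using the boundedness of $\Upsilon$ from hypothesis (\ref{item:it1}) together with the geometric decay of $\beta_k(f):=\prod_{j=0}^k|\alpha(\cal{RS}^j f)|$ (forced by $|\alpha(\cal{RS}^j f)|\leq 1/(N-1)$ in the high-type regime), one obtains
\[\Upsilon(f)=\sum_{k\geq 0}\beta_{k-1}(f)\,C(f^{(k)}),\q f^{(k)}:=\cal{RS}^k(f),\q \beta_{-1}(f):=1,\]
and the analogous series for $g$.

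Let $K=K(f,g)$ be the largest integer such that $\alpha(f),\alpha(g)$ lie in the same fundamental interval $I_K$ of the modified continued fraction and $\beta_{K-1}(f)\geq \gd^{1/2}$; if no such $K\geq 0$ exists, then $\gd$ is bounded below by a fixed positive constant and the conclusion is immediate from (\ref{item:it1}). Otherwise, for every $k\leq K$ the rotations $\alpha(f^{(k)})$ and $\alpha(g^{(k)})$ share a sign, so hypotheses (\ref{item:it2}) and (\ref{item:it4}) apply at each stage; moreover, bounded distortion of $H_k$ on $I_k$ (Lemma~\ref{lem:bd}) yields $\beta_{k-1}(f)\asymp\beta_{k-1}(g)$ and $|\alpha_k(f)-\alpha_k(g)|\leq e^C\gd/\beta_{k-1}(f)^2$. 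For the tail $k>K$, hypothesis (\ref{item:it3}) and the geometric decay of $\beta_\cdot$ give
\[\Big|\sum_{k>K}\beta_{k-1}(f)C(f^{(k)})-\beta_{k-1}(g)C(g^{(k)})\Big|\leq K_3\!\sum_{k>K}\big(\beta_{k-1}(f)+\beta_{k-1}(g)\big)\lesssim \beta_{K-1}(f)\leq \gd^{1/2}.\]
For the head, split into $\sum_{k\leq K}\beta_{k-1}(f)(C(f^{(k)})-C(g^{(k)}))+\sum_{k\leq K}(\beta_{k-1}(f)-\beta_{k-1}(g))C(g^{(k)})$; apply (\ref{item:it2}) to the first piece and unravel (\ref{item:it4}) recursively into
\[d_T(f_0^{(k)},g_0^{(k)})\leq \lambda^k d_T(f_0,g_0)+K_2\sum_{j<k}\lambda^{k-1-j}|\alpha_j(f)-\alpha_j(g)|.\]
The $\lambda^k d_T(f_0,g_0)$ term contributes $O(d_T(f_0,g_0))$, and after swapping summation orders and exploiting the joint geometric decay of $\beta_\cdot$ and $\lambda^\cdot$, every remaining piece reduces to controlling
\[\sum_{j\leq K}\beta_{j-1}(f)\,|\alpha_j(f)-\alpha_j(g)|\leq e^C\gd\sum_{j\leq K}\frac{1}{\beta_{j-1}(f)}\lesssim \frac{\gd}{\beta_{K-1}(f)}\leq \gd^{1/2},\]
where the penultimate inequality uses the geometric growth of $1/\beta_{j-1}(f)$ in $j$. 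The second piece of the head is handled by the same estimate via the product-rule telescoping $\beta_{k-1}(f)-\beta_{k-1}(g)=\sum_{j<k}\alpha_0(f)\cdots\alpha_{j-1}(f)(\alpha_j(f)-\alpha_j(g))\alpha_{j+1}(g)\cdots\alpha_{k-1}(g)$ combined with $|C|\leq K_3$.

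The statement for $P_\alpha$ follows from the same scheme, substituting (\ref{item:it6})--(\ref{item:it8}) for (\ref{item:it2})--(\ref{item:it4}) and omitting the Teichm\"uller contribution. The main technical obstacle is showing that the logarithmic singularity of the $d_{\log}$-metric at $0$ does not enlarge the rotation part of $d_{\log}(\alpha_k(f),\alpha_k(g))$ beyond the scale $|\alpha_k(f)-\alpha_k(g)|/\beta_{k-1}(f)^2$ used above; this is controlled through the explicit primitive $F(t)=t(1-\log t)$ of $|\log t|$ together with the bounded $d_{\log}$-diameter of $[-1/2,1/2]$, ensuring that every summation is dominated by its extreme term, which by the definition of $K$ is $\lesssim\gd^{1/2}$.
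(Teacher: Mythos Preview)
Your overall architecture matches the paper's: expand $\Upsilon$ as $\sum_k \beta_{k-1}C(f_k)$, split into head and tail, and control the head via the Lipschitz hypotheses and the tail via boundedness of $C$. However, there is a genuine gap in your tail estimate.

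You define $K$ as the largest integer with $\beta_{K-1}(f)\geq \gd^{1/2}$ (and same fundamental interval), and then assert the tail satisfies $\sum_{k>K}(\beta_{k-1}(f)+\beta_{k-1}(g))\lesssim \beta_{K-1}(f)\leq \gd^{1/2}$. The last inequality contradicts your own definition of $K$. Even if you meant $\beta_K(f)\leq\gd^{1/2}$, this is only guaranteed when the $\beta$-constraint is the one that binds at $K+1$. When instead the fundamental-interval constraint binds---i.e.\ $K=n_0-1$ where $n_0$ is the first level at which $\alpha,\alpha'$ lie in different fundamental intervals---one can have $\beta_{n_0-1}(f)\gg\gd^{1/2}$. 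Concretely, take $\alpha,\alpha'$ on opposite sides of, and very close to, a common boundary point of two adjacent level-$n_0$ intervals: then $\gd$ can be made arbitrarily small while $\beta_{n_0-1}(f)$ stays fixed, so your tail term $\beta_{n_0-1}(f)\cdot 2K_3$ does not go to $0$. The paper deals with exactly this scenario by \emph{not} putting $k=n_0$ (and in its case~(A) not $k=n_0+1$ either) into the crude tail; it keeps those indices in the Lipschitz-controlled head and proves a lemma showing $\gd\gtrsim \beta_{n_1-1}^2$ with $n_1\in\{n_0+1,n_0+2\}$ before switching to the crude bound. Your single cutoff $K$ cannot capture this.

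A second, smaller issue: in the head you write that ``every remaining piece reduces to controlling $\sum_{j\leq K}\beta_{j-1}|\alpha_j(f)-\alpha_j(g)|$,'' but hypothesis~(\ref{item:it2}) produces $d_{\log}(\alpha_j(f),\alpha_j(g))$, not $|\alpha_j(f)-\alpha_j(g)|$. You flag this at the end as the ``main technical obstacle'' but the sentence offered is not a proof. The paper's route is to use $d_{\log}(x,y)\leq M_a|x-y|^a$ for a fixed $a\in(1/2,1)$ and then prove the arithmetic lemma $\sum_{j\geq 1}\beta_{j-1}|\alpha_j-\alpha_j'|^a\leq K\,\gd^{1/2}$ by a three-case analysis (before, at, and after the splitting level). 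Your sketch that ``every summation is dominated by its extreme term'' is morally correct for the \emph{head} side of that lemma, but the near-diagonal term $j=n$ again requires the adjacent/non-adjacent case distinction that your argument omits.
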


\begin{rem}
As previously noted, the (uniform) modulus of continuity of
$\alpha\mapsto\Upsilon(\alpha):=\Upsilon(P_\alpha)$ with respect to the rotation number will not be better than 
$1/2$-H\"older. However, the pointwise modulus of continuity can be better at some values. Note that since 
H\"older continuous maps are uniformly continuous, $\Upsilon$ extends to the closure of $\HT_N$, which 
consists of $\HT_N$ plus some rationnals. Let $x$ be one of them (for instance, $x=0$). 
Then with a finite number of renormalizations, it follows from the continuity of $\Upsilon$, and the above 
hypotheses that $\Upsilon$ has a modulus of continuity at $x\in\Q$ of the form: 
\[|\Upsilon(x+\epsilon)-\Upsilon(x)| \leq c_x \big|\epsilon\log|\epsilon|\big|
\]
(in a neighborhood of $x$). This is a lot better than $|\epsilon|^{1/2}$, almost as good as $|\epsilon|$. 
However, $c_x$ depends on $x$.
It cannot be better than $\big|\epsilon\log|\epsilon|\big|$, though: from~\cite{HDR08}, we know that, 
for quadratic polynomials, at rationals $x$ and for some well chosen subsequences $\epsilon_n\tend 0$, 
the function $\Upsilon$ has expansion of the form 
$\Upsilon(x+\epsilon_n) = a_x + b_x\epsilon_n \log |\epsilon_n| + c_x \epsilon_n + o(\epsilon_n)$.
\end{rem}

\begin{rem} Theorem~\ref{thm:map} could be proved under weaker hypotheses. 
Indeed, the term $d_{\log}(\alpha(f),\alpha(g))$ in Hyp.~\ref{item:it2}, $d_{\log}(\alpha,\beta)$ in 
Hyp.~\ref{item:it6}, $|\alpha(f)-\alpha(g)|$ in Hyp.~\ref{item:it4} and $|\alpha-\beta|$ in Hyp.~\ref{item:it8} 
could respectively be replaced by $|\alpha(f)-\alpha(g)|^\delta$, $|\alpha-\beta|^\delta$, $|\alpha(f)-\alpha(g)|^\delta$ 
and $|\alpha-\beta|^\delta$ with $\delta \in (1/2,1)$: the proofs would still work.
\end{rem}

\medskip

Since $\Upsilon(P_\alpha) = \Upsilon(Q_\alpha)+ \log\frac{27}{16}$ and $\cal R(P_\alpha):=\cal R(Q_\alpha)$, 
thus $C(P_\alpha) = C(Q_\alpha) + \log\frac{27}{16}$, Hypotheses~\ref{item:it6}, \ref{item:it7} 
and~\ref{item:it8} are equivalent if $P_\alpha$ is replaced by $Q_\alpha$. 
Hypothesis~\ref{item:it1} has been proved in Proposition~\ref{P:Uniformly-bounded}.
Hypotheses~\ref{item:it3} and ~\ref{item:it7} follow from Proposition~\ref{P:Uniformly-bounded} and the 
definition of $C(f)$.
Hypotheses~\ref{item:it4},~\ref{item:it5}, and ~\ref{item:it8} have been proved in Proposition~\ref{P:contracting}.
Hypotheses~\ref{item:it2} and ~\ref{item:it6} have been proved in Proposition~\ref{P:Lipschitz} in the particular 
case when $\alpha(f)$ and $\alpha(g)$ are both positive.
\begin{proof}[Proof of Hypotheses~\ref{item:it2} and ~\ref{item:it6}]
The case when $\alpha(f)$ and $\alpha(g)$ are both negative follows at once from the case when they are both 
positive since $C(f)=C(\cal S f)$.
For the case when $\alpha(f)$ and $\alpha(g)$ have opposite sign, let us assume $\alpha(f)<0<\alpha(g)$ 
(the other case follows by permuting $f$ and $g$).
For Hypothesis~\ref{item:it6}, it is also simple because $\cal S P_\alpha = P_{-\alpha}$ hence 
\[|C(P_\alpha)-C(P_\beta)| = |C(P_{-\alpha})-C(P_\beta)| \leq K_1 d_{\log}(-\alpha,\beta) < K_1 d_{\log}(\alpha,\beta).\footnote{For maps in $\IS_{N}$, we cannot use this trick because there is also the term $d_T(f_0,g_0)$ to take into account, and $d_T((s f s^{-1})_0,g_0) = d_T(sf_0 s^{-1},g_0)$ could be a lot bigger than $d_T(f_0,g_0)$, even when $\alpha(f)$ and $\alpha(g)$ are small.}\]
For Hypothesis~\ref{item:it2}, recall that we proved in Proposition~\ref{prop:samelim} that $\alpha\mapsto\Upsilon(f_0\circ R_{\alpha})$ has left and right limits at $0$ that are equal. Since $\Upsilon$ is bounded (Hypothesis~\ref{item:it1}) and $C(f):=\Upsilon(f)-|\alpha(f)|\Upsilon(\cal{RS}(f))$, it follows that $\alpha\mapsto C(f_0\circ R_{\alpha})$ also has left and right limits at $0$ that are equal.
Then:
\bEA C(f)-C(g) & = & \phantom{+}\,C(f)-\lim_{\alpha \to 0^-} C(f_0 \circ R_\alpha)  
\\ & & + \lim_{\alpha \to 0^-} C(f_0 \circ R_\alpha) - \lim_{\alpha \to 0^+} C(f_0 \circ R_\alpha)
\\ & & + \lim_{\alpha \to 0^+} C(f_0 \circ R_\alpha) - C(f_0\circ R_{\alpha(g)}) 
\\ & & + \, C(f_0\circ R_{\alpha(g)})-C(g).\eEA
Recall that $d_{\log}(f,g) := d_{\log}(\alpha(f),\alpha(g)) + d_T(f_0,g_0)$. 
By the case when both rotation numbers are negative, we know that for $\alpha\in (\alpha(f),0)$,
\[|C(f)-C(f_0\circ R_\alpha)| \leq K_1 d_{\log}(f, f_0\circ R_\alpha)=0+ K_1 d_{\log}(\alpha(f),\alpha).\] 
Thus by passing to the limit $\alpha \tend 0^-$, we get that the first term in the sum above is $\leq K_1 d_{\log}(\alpha(f),0)$. Similarly, the third is $\leq K_1 d_{\log}(0,\alpha(g))$. 
We have proved that the second term is null. 
For the fourth term, by the case when both rotation numbers are positive, we get 
\[|C(f_0\circ R_{\alpha(g)})-C(g)| \leq K_1 d_{\log}(f_0\circ R_{\alpha(g)},g) = K_1 d_T(f_0,g_0)+0.\] 
All in all:
\[|C(f)-C(g)| \leq K_1\big( d_{\log}(\alpha(f),0)+ d_{\log}(0,\alpha(g)) + d_T(f_0,g_0)\big).\] 
Since $\alpha(f)$ and $\alpha(g)$ have opposite signs, 
\[d_{\log}(\alpha(f),0)+ d_{\log}(0,\alpha(g)) = d_{\log}(\alpha(f),\alpha(g)).\] 
Whence $|C(f)-C(g)| \leq K_1 d_{\log}(f,g)$.
\end{proof}

\subsection{Discussion about rotation number one half}

The Marmi Moussa Yoccoz conjecture concerns a function $\Upsilon$ that is defined on the 1-torus $\R/\Z$ (or it can be viewed as a $1$-periodic function on $\R$).
Recall the number $N\geq 2$ involved in the definition of high type numbers (using modified continued fractions).
\footnote{\label{fn:is}According to discussion with them, the number $N$ provided by Inou and Shishikura is likely to be no less than $20$.
}
Note that for $N=2$, the high type numbers are all the irrationals! In $\R/\Z$, the set of high type numbers is bounded away from $1/2$ as soon as $N\neq 2$.
Extra caution should be taken near rotation number $1/2$. Indeed, it is known that as $\alpha$ varies from $0$ to $1/2$, the fundamental cylinders used in parabolic renormalization tend to different domains as when $\alpha$ varies from $0$ to $-1/2$.
But in $\R/\Z$, $-1/2=1/2$. \emph{This means that there is a discontinuity of cylinder renormalization as one crosses $1/2$.}
To solve this, we believe one may use an extension of one of the two branches of the renormalization operator beyond $1/2$. However, in the present article, we will not worry about that and we will assume $N\geq 3$. We will work in $\R$ instead of $\R/\Z$ and make statements about $\HT_N\cap [-1/2,1/2]$.

\begin{rem}
Crossing $0$ also creates problems and a discontinuity of the renormalization operator. But sizes get multiplied by a factor tending to $0$. This absorbs the discontinuity. We used this in the proof of hypothesis~\ref{item:it2} in Section~\ref{subsec:map}.
\end{rem}

\subsection{Proof, first steps}

The proof of Theorem~\ref{thm:map} is split across the remaining subsections of Section~\ref{sec:arn}.

\bigskip

For $f\in\cal IS_\R$ recall that $\cal S(f) = f$ if $\alpha(f) \in(0,1/2) \bmod \Z$ and $\cal S(f) = s\circ f\circ s$  if $\alpha(f) \in(-1/2,0) \bmod \Z$, where $s(z)=\ov z$.
To shorten the notations in subsequent computations, we will denote
\[f_n = \cal S (\cal{RS})^n(f)\text{ and }g_n = \cal S (\cal{RS})^n(g).\]
They are defined so that the sequence of rotation numbers of $f_n$ coincides with the sequence $\alpha_n(f)$ associated to the modified continued fraction of $\alpha(f)$. Note that $\cal S^2=\cal S$ and thus $\cal S (\cal{RS})^n = (\cal{SRS})^n$. However, it is important to note that $\cal S$ is not continuous with respect to $f$ as $\alpha(f)$ crosses $0$. Hence an inequality like Hypothesis~\ref{item:it4} cannot hold with $\cal{SRS}_0$ replacing $\cal {RS}_0$, even if $\alpha(f)$ and $\alpha(g)$ have the same sign, because the representatives in $[-1/2,1/2)$ of $\alpha(\cal{RS}(f))$ and $\alpha(\cal{RS}(g))$ could still have opposite signs.

Let $f\in\IS_N\cup \Pol_N$. Then for all $n>0$, $f_n\in\IS_N$.
By induction, for all $n\in\N$,
\[\Upsilon(f) = \sum_{k=0}^{n} \beta_{k-1}(f) C(f_k) + \beta_n(f)\Upsilon(f_{n+1}).\]
Let $n\tend+\infty$. Since $\beta_n(f)\tend 0$, and $\Upsilon$ is bounded over $\IS_N$, we get
\[\Upsilon(f) = \sum_{k=0}^{+\infty} \beta_{k-1}(f) C(f_k),\]
which is by itself an interesting formula.

Now assume either $f,g \in \IS_N$ or $f,g\in \Pol_N$: if $\alpha(f)=\alpha(g)$ then denoting $\beta_k=\beta_k(f)=\beta_k(g)$,
\[\Upsilon(f)-\Upsilon(g)  =  \sum_{k=0}^{+\infty} \beta_{k-1} \big(C(f_k) - C(g_k)\big),\]
whence
\begin{equation}\label{eq:upup1} 
\big|\Upsilon(f)-\Upsilon(g)\big|  \leq  K_1\sum_{k=0}^{+\infty}\beta_{k-1}\,d_{T}(f_k,g_k).
\end{equation}
Otherwise, 
let us split the summand the usual way:
\begin{multline*}
\beta_{k-1}(f) C(f_k) - \beta_{k-1}(g) C(g_k)  = 
\beta_{k-1}(f) C(f_k) - \beta_{k-1}(f) C(g_k) \\
+  \beta_{k-1}(f) C(g_k) - \beta_{k-1}(g) C(g_k) .
\end{multline*}
Hence
\begin{multline*}
\big|\beta_{k-1}(f) C(f_k) - \beta_{k-1}(g) C(g_k)\big|  \leq
\beta_{k-1}(f) \times \big| C(f_k) - C(g_k)\big|
\\ + \big|\beta_{k-1}(f) - \beta_{k-1}(g)\big| \times |C(g_k)|.
\end{multline*}
Whence
\begin{equation}\label{eq:upup}
\big| \Upsilon(f)-\Upsilon(g) \big|  \leq \sum_{k=0}^{+\infty}\beta_{k-1}(f)\,|C(f_k)-C(g_k)| + K_3\sum_{k=0}^{+\infty}|\beta_{k-1}(f)-\beta_{k-1}(g)|.
\end{equation}

\subsection{First term}\label{subsec:firstterm}

Here we deal with the following term in Equation~\eqref{eq:upup}
\[\sum_{k=0}^{+\infty}\beta_{k-1}(f)\,|C(f_k)-C(g_k)|.\]
To shorten the notations we let $\alpha = \alpha(f)$, $\alpha' = \alpha(g)$, $\alpha_n$ and $\beta_n$ associated to $\alpha$, as well as $\alpha'_n$ and $\beta'_n$ associated to $\alpha'$ and
\[ d_n=|\alpha_n-\alpha'_n|,\quad d'_n = d_{\log}(\alpha_n,\alpha'_n) 
.\]

Recall the definition given in Section~\ref{subsec:def} of $n$-th generation fundamental intervals and of the map $H_n$ sending bijectively each of these intervals to $(0,1/2)$. For convenience, we copy here a few statements made in that Section.
First, Lemma~\ref{lem:bd} states that there exists $C>0$ such that for all $n\geq 0$, for all $n$-th generation fundamental interval $I$, 
\begin{equation}\label{eq:lembd}
\forall x,y\in I, \quad e^{-C}\leq \left| \frac{H_n'(x)}{H_n'(y)}\right | \leq e^C
.\end{equation}
The union of the fundamental intervals of a given generation is the complement of a countable closed set.
The map $H_n$ is differentiable on each fundamental interval, and
\[H'_n(\alpha) = \pm \frac{1}{\beta_{n-1}^2}
.\]
Last:
\[e^{-C} \leq \frac{\big|I_n(\alpha)\big|}{\beta_{n-1}^2/2}\leq e^{C}
.\]

Let $n_0\geq 0$ be \emph{the first integer so that $\alpha_{n_0}$ and $\alpha'_{n_0}$ belong to different fundamental intervals}.
For some values of $k$ we will use $|C(f_k)-C(g_k)|\leq K_1 d_{\log}(f_k,g_k)$.
Recall that either $f,g\in\IS_N$ or $f,g\in\Pol_N$. In the second case we set $d_T((f)_0,(g)_0)=0$. Then:
\begin{itemize}
\item for $k=0$, $d_{\log}(f_k, g_k) = d_{\log}(f,g) = d_T((f)_0,(g)_0) + d'_0$,
\item for $1\leq k\leq n_0$, $d_T(( f_k)_0,(g_k)_0) \leq  \lambda d_T((f_{k-1})_0,(g_{k-1})_0) + K_2 d_{k-1}$
\item thus by induction: for $0\leq k\leq n_0$,
\[ d_T(( f_k)_0,( g_k)_0) \leq \lambda^k d_T((f)_0,(g)_0) + K_2 \sum_{m=0}^{k-1} \lambda^{k-1-m} d_m
,\]
whence
\[ d_{\log}( f_k, g_k) \leq \lambda^k d_T((f)_0,(g)_0) + K_2 \sum_{m=0}^{k-1} \lambda^{k-1-m} d_m + d'_k 
.\]
\item $d_T((\cal{RS}f_{n_0})_0,(\cal{RS}g_{n_0})_0) \leq  \lambda d_T((f_{n_0})_0,(g_{n_0})_0) + K_2 d_{n_0}$
\end{itemize}
Then
\begin{multline*}
\sum_{k=0}^{n_0}\beta_{k-1}\,d_{\log}(f_k,g_k) \leq  \big(\sum_{k=0}^{n_0} \beta_{k-1}\lambda^k\big) d_T(f_0,g_0)  \\
+ K_2\sum_{k=0}^{n_0} \beta_{k-1} \sum_{m=0}^{k-1} \lambda^{k-1-m}d_m + \sum_{k=0}^{n_0} \beta_{k-1} d'_k.
\end{multline*}
Let us regroup the terms according to $d'_0$, $d'_1$, $d'_2$, \ldots and $d_0$, $d_1$, $d_2$, \ldots:
\begin{multline*}
\sum_{k=0}^{n_0}\beta_{k-1}\,d_{\log}(f_k,g_k) \leq  \Big(\sum_{k=0}^{n_0} \beta_{k-1}\lambda^k\Big)  d_T(f_0,g_0) \\
+ \sum_{j=0}^{n_0}   d'_j \beta_{j-1}
+K_2 \sum_{j=0}^{n_0}  d_j \Big(\sum_{k=j+1}^{n_0} \beta_{k-1} \lambda^{k-1-j}\Big).
\end{multline*}
Note that since $\lambda<1$ and $\beta_{k-1}\leq 1/2^k$, we get $\sum_{k=0}^{n_0} \beta_{k-1}\lambda^k \leq 2$.
Let us bound the following factor in the preceding estimate:
\begin{align*}
 \sum_{k=j+1}^{n_0} \beta_{k-1} \lambda^{k-1-j}
& \leq \sum_{k=j+1}^{+\infty} \beta_{k-1} \lambda^{k-1-j} & \\
& =  \beta_{j-1} \sum_{k=j+1}^{+\infty} \alpha_{j}\cdots \alpha_{k-1} \lambda^{k-1-j} & \\
& \leq\beta_{j-1}\sum_{j=1}^{+\infty} 2^{-j} &\text{(}\lambda\leq 1 \text{ and }a_j\leq 1/2\text{)} \\
& =  \beta_{j-1} &
\end{align*}
Finally we get 
\[ \sum_{k=0}^{n_0}\beta_{k-1}\,|C(f_k)-C(g_k)| \leq 2 K_1\,d_T(f_0,g_0)
+K_1 K_2 \sum_{j=0}^{n_0}\beta_{j-1}d_j
+K_1 \sum_{j=0}^{n_0}\beta_{j-1}d'_j
.\]
For the remaining terms, we will make two cases. Recall that we denote $I_{n_0}(\alpha)$ and $I_{n_0}(\alpha)$ the $n_0$-th generation fundamental intervals $\alpha_{n_0}$ and $\alpha'_{n_0}$ belong to.
\begin{enumerate}
\item[(A)] The fundamental intervals $I_{n_0}(\alpha)$ and $I_{n_0}(\alpha)$ are adjacent and have symbols $(a,+)$, $(a,-)$ as the  last entry of their symbolic representation (see Section~\ref{subsec:def}), with the same value of $a$. 
\item[(B)] the other case.
\end{enumerate}
In case (A) we let $n_1:=n_0+2$, in case (B) $n_1:=n_0+1$.
If necessary, let us permute\footnote{This permutation is done only at this point of the article, so it is safe.} $f$ and $g$, so that $\alpha_{n_0}>\alpha'_{n_0}$.
\begin{lem}
$|\alpha-\alpha'| \geq e^{-2C} \beta_{n_1-1}^2 /12$.
\end{lem}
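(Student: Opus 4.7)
The plan is a two-step reduction. First, since by the definition of $n_0$ both $\alpha$ and $\alpha'$ lie in the common generation-$n_0$ fundamental interval $I_{n_0}(\alpha)=I_{n_0}(\alpha')$, and $|H_{n_0}'|=1/\beta_{n_0-1}^2$ on this interval, Lemma~\ref{lem:bd} together with the mean value theorem yields
\[
|\alpha-\alpha'|\geq e^{-C}\beta_{n_0-1}(\alpha)^2\,|\alpha_{n_0}-\alpha'_{n_0}|.
\]
Second, I would lower-bound $|\alpha_{n_0}-\alpha'_{n_0}|$ by explicit arithmetic, the form of which depends on the case. Set $a=a_{n_0+1}(\alpha)$ and $a'=a_{n_0+1}(\alpha')$; since $\alpha_{n_0}>\alpha'_{n_0}$, one has $a\leq a'$.

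In case (A), $a=a'$, and $\alpha_{n_0}>\alpha'_{n_0}$ forces $s_{n_0+1}(\alpha)=-$, $s_{n_0+1}(\alpha')=+$, hence $\alpha_{n_0}=1/(a-\alpha_{n_0+1})$ and $\alpha'_{n_0}=1/(a+\alpha'_{n_0+1})$. A direct computation yields
\[
\alpha_{n_0}-\alpha'_{n_0}=\frac{\alpha_{n_0+1}+\alpha'_{n_0+1}}{(a-\alpha_{n_0+1})(a+\alpha'_{n_0+1})}\geq\frac{\alpha_{n_0+1}}{(a+1/2)^2}.
\]
Combined with $\beta_{n_0+1}^2=\beta_{n_0-1}^2\alpha_{n_0}^2\alpha_{n_0+1}^2$ and the elementary bound $\alpha_{n_0}^2(a-\alpha_{n_0+1})(a+\alpha'_{n_0+1})=(a+\alpha'_{n_0+1})/(a-\alpha_{n_0+1})\leq 5/3$ valid for $a\geq 2$, the distortion estimate readily gives $|\alpha-\alpha'|\geq e^{-2C}\beta_{n_1-1}^2/12$, with $n_1-1=n_0+1$.

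In case (B), $a<a'$, and the generation-1 fundamental intervals of $(0,1/2)$ are ordered as $\ldots,(a+1,+),(a+1,-),(a,+),(a,-),(a-1,+),\ldots$. If the intervals containing $\alpha'_{n_0}$ and $\alpha_{n_0}$ are non-adjacent, then some full intervening generation-1 interval lies between them, of length at least $1/((2a+1)(a+1))$. Otherwise, they are adjacent, necessarily of type $(a+1,-)/(a,+)$ (since same $a$ is case (A)), with common boundary $2/(2a+1)$; the analogous identity
\[
\alpha_{n_0}-\alpha'_{n_0}=\frac{1-\alpha_{n_0+1}-\alpha'_{n_0+1}}{(a+\alpha_{n_0+1})(a+1-\alpha'_{n_0+1})}
\]
together with the high-type hypothesis $N\geq 3$ (which forces $\alpha_{n_0+1},\alpha'_{n_0+1}\leq 2/(2N-1)\leq 2/5$, hence numerator $\geq 1/5$ and denominator $\leq (a+1)^2$) gives $|\alpha_{n_0}-\alpha'_{n_0}|\geq 1/(5(a+1)^2)$. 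In either sub-case $|\alpha_{n_0}-\alpha'_{n_0}|\geq c/a^2$ for an absolute $c>0$; combined with $\alpha_{n_0}\leq 1/a$, the distortion estimate produces $|\alpha-\alpha'|\geq e^{-2C}\beta_{n_1-1}^2/12$, now with $n_1-1=n_0$.

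The main subtlety is case (A): there $\alpha_{n_0}$ and $\alpha'_{n_0}$ may both approach the rational pole $1/a$ at arbitrary speed despite differing in the generation-$(n_0+1)$ sign, so no lower bound on $|\alpha_{n_0}-\alpha'_{n_0}|$ in terms of $\beta_{n_0}^2$ alone can hold; this is precisely why one must descend one further level and work with $\beta_{n_0+1}$, exploiting the explicit dependence of $\alpha_{n_0}-\alpha'_{n_0}$ on $\alpha_{n_0+1},\alpha'_{n_0+1}$.
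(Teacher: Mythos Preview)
Your proof is correct but proceeds differently from the paper. The paper argues geometrically: in each case it exhibits a full fundamental interval lying strictly between $\alpha$ and $\alpha'$ and lower-bounds its length via the estimate $|I'|\geq e^{-2C}\beta_{n-1}^2/2$ for an interval $I'$ adjacent to $I_n(\alpha)$. In Case~(A) the point is that generation-$(n_0+2)$ intervals accumulate at the common boundary of $I_{n_0+1}(\alpha)$ and $I_{n_0+1}(\alpha')$, so one of them (namely the one adjacent to $I_{n_0+2}(\alpha)$ on the boundary side) always separates $\alpha$ from $\alpha'$; in the adjacent $(a,+)/(a+1,-)$ subcase of~(B) the paper uses $N\geq 3$ to ensure that the generation-$(n_0+2)$ sub-interval with last symbol $(2,+)$, which $\alpha$ avoids by high type, lies between $\alpha$ and the boundary. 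Your route instead pulls everything down to level $n_0$ via bounded distortion on the common interval $I_{n_0}(\alpha)=I_{n_0}(\alpha')$ and then computes $|\alpha_{n_0}-\alpha'_{n_0}|$ explicitly from continued-fraction identities; this is more computational but makes transparent why Case~(A) forces one further level (your identity shows $|\alpha_{n_0}-\alpha'_{n_0}|$ is only of order $\alpha_{n_0}^2\alpha_{n_0+1}$, not $\alpha_{n_0}^2$). One minor point: your ``readily gives $e^{-2C}\beta_{n_1-1}^2/12$'' in Case~(B) skips the numerical check; it does go through (e.g.\ in the adjacent subcase, $\alpha_{n_0}<1/a$ reduces it to $12e^C a^2\geq 5(a+1)^2$, valid for $a\geq 2$), but a line confirming the constants would complete the argument.
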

\begin{proof}
If the two intervals are not adjacent, then we are in case (B), so $n_1=n_0+1$.  Let $I$ be the first generation interval just left to that containing $\alpha_{n_0}$. Then it separates $\alpha_{n_0}$ from $\alpha'_{n_0}$ thus $\alpha$ and $\alpha'$ are separated by the fundamental interval of generation $n_1$ adjacent to that of $\alpha$ and corresponding under $H_n$ to $I$. By the estimates following Equation~\eqref{eq:lembd},
\[ |\alpha-\alpha'|\geq e^{-2C}\beta_{n_1-1}^2/2.\]

If the intervals are adjacent but with symbols $(a,+)$, $(a+1,-)$ we are aslo in case (B). Recall that we assumed $N\geq 3$. There is thus a pair of generation $n_0+2$ intervals $I$ and $I'$ with symbol ending with $(2,+)$ inside $I_{n_0+1}(\alpha)$ and $I_{n_0+1}(\alpha')$ separating $\alpha$ from $\alpha'$. The quotients $|I|/|I_{n_0+1}(\alpha)|$ and $|I'|/|I_{n_0+1}(\alpha')|$ are both bounded from below, using bounded distorsion (Equation~\eqref{eq:lembd}), by $e^{-C}$ times the value of this quotient for first generation intervals, which is equal to $1/6$. We have $|\alpha-\alpha'| \geq |I|+|I'|$ but for convenience, let us forget $|I'|$:
\[\alpha-\alpha'\geq e^{-2C}\beta_{n_1-1}^2/12\]

Otherwise, we are in case (A): $n_1=n_0+2$. There is a fundamental interval of generation $n_0+2$ inside $I_{n_0+1}(\alpha)$ that separates $\alpha$ from $\alpha'$. 
Hence\footnote{This estimate is sufficient but is far from optimal. A bound of order $\beta_{n_1-2}^2(\alpha_{n_1-1}+\alpha_{n_1-1})$ can be obtained.}
\begin{equation*}
|\alpha-\alpha'| \geq e^{-2C}\beta_{n_1-1}^2/2.\qedhere
\end{equation*}
\end{proof}
In both cases (A) and (B), to bound the part of the sum with $k\geq n_1$, let us use $\beta_{k-1}\,|C(f_k)-C(g_k)| \leq 2K_3\beta_{k-1}$ and
$\sum_{k=n_1}^{+\infty} \beta_{k-1} \leq 2 \beta_{n_1-1}$, whence
\[ \sum_{k=n_1}^{+\infty} \beta_{k-1}\,|C(f_k)-C(g_k)| \leq 4 K_3 \beta_{n_1-1} \leq 4 e^C K_3 \sqrt{12}  |\alpha-\alpha'|^{1/2}.
\]

The only term of the sum that we have not bounded is the one with $k=n_0+1$ in case (A). 
Let $\wt f_{n_0+1} = \cal{RS} f_{n_0}$ (compare with $f_{n_0+1} = \cal{SRS}f_{n_0}$) and $\wt g_{n_0+1} = \cal{RS} g_{n_0}$.
Since $C(f)=C(\cal S f)$ for all $f$, this implies that 
\[ |C(f_{n_0+1})-C(g_{n_0+1})| = |C(\wt f_{n_0+1})-C(\wt g_{n_0+1})|
.\]
Similarly, let $\wt \alpha_{n_0+1} = 1/\alpha_{n_0} - a_{n_0+1} \in [-1/2,1/2]$, and $\wt \alpha'_{n_0+1} = 1/\alpha'_{n_0} - a'_{n_0+1} \in [-1/2,1/2]$.
This has been chosen so that: $\wt f_{n_0+1}'(0)=e^{2\pi i \wt \alpha_{n_0+1}}$ and $\wt g_{n_0+1}'(0)=e^{2\pi i \wt \alpha'_{n_0+1}}$.
Let us now introduce the intermediary function $h=e^{2\pi i \wt \alpha'_{n_0+1}} (\wt f_{n_0+1})_0$. Then
\[|C(f_{n_0+1})-C(g_{n_0+1})| \leq |C(\wt f_{n_0+1})-C(h)|+|C(h)-C(\wt g_{n_0+1})|
.\]
Then we will use the Lipschitz property of $C$ with respect to $d_{\log}$.
For the term $|C(h)-C(\wt g_{n_0+1})|$, we obtain the upper bound $K_1\big(d_T((h)_0,(\wt g_{n_0+1})_0)+0\big)$, which equals 
$K_1\times0+K_1d_T\big((\wt f_{n_0+1})_0,(\wt g_{n_0+1})_0\big)$. 
For the term $|C(\wt f_{n_0+1})-C(h)|$ we obtain the upper bound $K_1 d_{\log}(\wt \alpha_{n_0+1},\wt\alpha'_{n_0+1})$, which equals $K_1d_{\log}(-\alpha_{n_0+1},\alpha'_{n_0+1})$. 
Hence
\[|C(f_{n_0+1})-C(g_{n_0+1})| \leq K_1 \Big( d_T\big((\wt f_{n_0+1})_0, (\wt g_{n_0+1})_0\big)+ d_{\log}(-\alpha_{n_0+1},\alpha'_{n_0+1})\Big)
.\]
Now we can use
\[d_T\big((\wt f_{n_0+1})_0,(\wt g_{n_0+1})_0\big) \leq \lambda d_T\big((f_{n_0})_0,(g_{n_0})_0\big) + K_2 |\alpha_{n_0}-\alpha'_{n_0}|,\]
thus
\[d_T\big((\wt f_{n_0+1})_0,(\wt g_{n_0+1})_0\big) \leq \lambda^{n_0+1} d_T((f)_0,(g)_0) + K_2 \sum_{m=0}^{n_0} \lambda^{n_0-m} d_m \]
which can be incorporated to the computation made earlier of the sum from $k=0$ to $n_0$. 
So, in case (A):
\begin{multline*}
 \sum_{k=0}^{n_0+1}\beta_{k-1}\,|C(f_k)-C(g_k)|  \leq 2 K_1\,d_T(f_0,g_0) +K_1K_2\sum_{j=0}^{n_0+1}\beta_{j-1}d_j \\ 
 + K_1\sum_{j=0}^{n_0}\beta_{j-1}d'_j   + K_1\beta_{n_0} d''_{n_0+1}
\end{multline*}
with $d''_{n_0+1} = d_{\log}(-\alpha_{n_0+1},\alpha'_{n_0+1})$. Let $x$ be the touching point between $I_{n_0+1}(\alpha)$ and $I_{n_0+1}(\alpha')$: $|\alpha-\alpha'|=|\alpha-x|+|\alpha'-x|$.
By bounded distorsion (Equation~\eqref{eq:lembd}), $|\alpha-x| \geq e^{-C} \beta_{n_0}^2 \alpha_{n_0+1}$.
Similarly (see the discussion following Equation~\eqref{eq:lembd}) $|\alpha'-x| \geq e^{-2C} \beta_{n_0}^2 \alpha'_{n_0+1}$.
So $|\alpha-\alpha'| \geq \beta_{n_0}^2e^{-2C}(\alpha_{n_0+1} +\alpha'_{n_0+1})$. 
Now, using $d_{\log}(-\alpha_{n_0+1},\alpha'_{n_0+1}) \leq M_0(\alpha_{n_0+1} +\alpha'_{n_0+1})^{1/2}$ 
we get
\[\beta_{n_0} d''_{n_0+1} \leq M_0 e^C |\alpha-\alpha'|^{1/2}.\]

Taking both cases (A) and (B) into account, we get:
\begin{multline*}
\sum_{k=0}^{+\infty}\beta_{k-1}\,|C(f_k)-C(g_k)| \leq M_1 |\alpha-\alpha'|^{1/2} + 2 K_1\,d_T(f_0,g_0) \\ 
+ K_1 K_2 \sum_{j=0}^{+\infty}\beta_{j-1}d_j + K_1 \sum_{j=0}^{+\infty}\beta_{j-1}d'_j
\end{multline*}
where $M_1 = 4e^C K_3\sqrt{12}+K_1 M_0 e^C$.

\subsection{Second term}\label{subsec:secondterm}

Let us now deal with the following term in Equation~\eqref{eq:upup}
\[ K_3 \sum_{k=0}^{+\infty}|\beta_{k-1}(f)-\beta_{k-1}(g)|.
\]

\begin{rem} For the modified continued fraction algorithm, each $\beta_k$ is a continuous function of $\alpha$, whereas it is not the case for the classical continued fractions. 
Note also that each $\beta_k$ is in fact $1/2$-H\"older continuous (with respect to $\alpha$) with constant $B_k$, and that the sequence $B_k$ is bounded but unfortunaltely does not tend to $0$, thus $\sum B_k$ is not convergent.
\end{rem}

From
{
\renewcommand{\a}{\alpha}
\begin{multline*}
\beta_k(f)-\beta_k(g) = \a_0\cdots\a_{k} -\a'_0\cdots\a'_{k} 
\\ = \a_0 \cdots a_{k-2}\a_{k-1} (\a_k -  \a'_k) + \a_0 \cdots \a_{k-2} (\a_{k-1} -  \a'_{k-1}) \a'_k 
\\+ \a_0 \cdots (\a_{k-2} -  \a'_{k-2})  \a'_{k-1} \a'_k
 + \ldots + (\a_0-\a'_0) \a'_1 \cdots \a'_k
\end{multline*}
}
we get:
\begin{align*}
\sum_{k=0}^{+\infty}|\beta_{k-1}(f)-\beta_{k-1}(g)| &= \sum_{k=0}^{+\infty}|\beta_{k}(f)-\beta_{k}(g)| \\
& \leq  \sum_{k=0}^{+\infty} \sum_{j=0}^{k} \alpha_0 \cdots \alpha_{j-1} \Big|\alpha_j-\alpha'_j\Big|\alpha'_{j+1} \cdots \alpha'_{k} \\
& = \sum_{j=0}^{+\infty} \alpha_0\cdots\alpha_{j-1}|\alpha_j-\alpha'_j|\sum_{k=j}^{+\infty} \alpha'_{j+1}\cdots\alpha'_{k}
\end{align*}
With the convention that for $k=j$, $\alpha'_{j+1}\cdots\alpha'_{k} = 1$. Since each $\alpha'_i$ is $\leq 1/2$, we get
\[\sum_{k=j}^{+\infty} \alpha'_{j+1}\cdots\alpha'_{k} \leq 2
.\]
Hence, using the following notation:
\[ d_n = |\alpha_n-\alpha_n'|,
\]
we obtain the following bound on the second term:
\begin{equation*} 
K_3\sum_{k=0}^{+\infty}|\beta_{k-1}(f)-\beta_{k-1}(g)| \leq 2K_3 \sum_{j=0}^{+\infty} \beta_{j-1} d_j.
\end{equation*}

\subsection{Arithmetics}\label{subsec:arith}

The work presented below is not original; it is basically a simplified version of part of~\cite{MMY97}, reformulated in different notations.

\subsubsection{Estimates on the size of fundamental intervals}\label{sss:esfi}

As a trick to simplify the presentation of the next section, we introduce \emph{extended fundamental intervals} of generation $n$. These are the sets on which $H_{n+1}$ is continuous. They can also be characterized as the union of two consecutive fundamental intervals of generation $n$ and their common boundary point, but only for those pairs whose symbols end with respectively $(a_n,+)$ and $(a_n+1,-)$ (the rest of their symbol must be identical). Like the fundamental intervals, their collection also form a nested sequence of partitions of the irrationals. Like above, for $\alpha$ irrational, we will denote
\[\wt I_n(\alpha)\]
the $n$-th generation extended fundamental interval containing $\alpha$.
Another corollary of Equation~\eqref{eq:lembd} is that for two consecutive fundamental intervals $I,I'$ at a given level, we have:
\[ e^{-2C} \leq \frac{|I'|}{|I|} \leq e^{2C}
.\]
In fact we will use the following slightly better\footnote{The proof could be as well carried out without this improvement.} estimate, using the fact that the left derivative and the right derivative of $H_n$ at the point where $I$ and $I'$ meet, have equal absolute values:
\[\forall \alpha\in I,\ e^{-2C} \leq \frac{\big|I'\big|}{\beta_{n-1}^2/2}\leq e^{2C}\]
whence
\begin{equation}\label{eq:iiprime}
\forall \alpha\in I,\ e^{-C}+e^{-2C} \leq \frac{|\wt I_n(\alpha)|}{\beta_{n-1}^2/2}\leq e^{C}+e^{2C}.
\end{equation}

More generally, if one has $k$ consecutive adjacent fundamental intervals of generation $n$, and $\alpha$ belongs to one of them, then
the sum $\Sigma$ of their lengths satisfies
\[ e^{-C}+e^{-2C}+\cdots+e^{-kC}\leq \frac{ \Sigma}{\beta_{n-1}^2/2}\leq e^C+e^{2C}+\cdots+e^{kC}
.\]

Let us now try and give various estimates of $|\alpha-\alpha'|$ in terms of their modified continued fraction expansion.
Assume that $\alpha,\alpha'\in[-1/2,1/2]$, that they are both irrationals and that $\alpha\neq \alpha'$. Let $n\geq 0$ be the first integer such that $\alpha$ and $\alpha'$ do not belong to the same generation $n$ extended fundamental interval.
If $n>0$, from $\wt I_{n_2-1}(\alpha)= \wt I_{n-1}(\alpha')$ we get the upper bound $|\alpha-\alpha'| \leq |\wt I_{n-1}(\alpha)| $ thus
\[|\alpha-\alpha'| \leq \frac{e^{C}+e^{2C}}{2} \beta_{n-2}^2.\]
Depending on the situations, this upper bound can be far from sharp. 
For a lower bound, let us consider the two cases: 
\begin{itemize}
\item If $\wt I_n(\alpha)$ and $\wt I_n(\alpha')$ are not adjacent, then
\[ \frac{e^{-2C}+e^{-3C}}{2} \beta_{n-1}^2 \leq |\alpha-\alpha'| 
.\]
\item If they are adjacent, then
\[  e^{-4C} (\alpha_n+\alpha'_n)\beta_{n-1}^2 \leq |\alpha-\alpha'| 
.\]
\end{itemize}
Those estimates can also be far from optimal.
\begin{proof}
In the first case, $\alpha$ and $\alpha'$ are separated by at least one extended interval adjacent to the one containing $\alpha$.

In the second case, $[\alpha,\alpha']$ intersects between $2$ and $4$ fundamental intervals. On each, $H_n$ is a bijection. Let $u$ be the common point in the boundaries of $\wt I_n(\alpha)$ and $\wt I_n(\alpha')$.
Then $[\alpha,\alpha']\setminus \{u\}$ splits into two components: $[\alpha,u)\subset \wt I_n(\alpha)$, and $(u,\alpha']\subset \wt I_n(\alpha')$. 
Consider $[\alpha,u)$. Then it is either contained in only one fundamental interval, in which case it is equal to the connected component $J$ containing $\alpha$ of $H_n^{-1}((0,\alpha_{n}))$, and $|J| \geq e^{-C}\beta_{n-1}^{-2} \alpha_n$.
Or it is not and then it contains the component $J$ of $H_n^{-1}((0,1/2))$ that is contained in $\wt I_n(\alpha)$, and $|J|\geq e^{-2C} \beta_{n-1}^{-2} /2$. In both cases, $|J| \geq e^{-2C}\beta_{n-1}^{-2}\alpha_n$. Similarly, $(u,\alpha']$ contains an interval $J'$ whose length is $\geq e^{-4C}\beta_{n-1}^{-2}\alpha'_n$. 
Now $|\alpha-\alpha'|\geq |J|+|J'| \geq (e^{-2C}\alpha_n+e^{-4C}\alpha'_n)\beta_{n-1}^2 \geq e^{-4C}(\alpha_n+\alpha'_n)\beta_{n-1}^2$, thus we get the lower bound 
$e^{-4C}(\alpha'_n+\alpha_{n})\beta_{n-1}^2
 \leq |\alpha-\alpha'|$.
The upper bound follows from the remark after Equation~\eqref{eq:iiprime}.
\end{proof}
Note that in the two cases, we get the following weaker lower bound:
\begin{equation}\label{eq:wlb} 2e^{-4C}\beta_{n}^2 \leq |\alpha-\alpha'|
.\end{equation}

\subsubsection{Bounding an arithmetically defined sum}\label{sss:bads}

By the work of Sections~\ref{subsec:firstterm} and~\ref{subsec:secondterm}, to prove the main theorem, we are reduced to bound the quantities $\sum_{j=0}^{+\infty} \beta_{j-1} d'_j$ and $\sum_{j=0}^{+\infty} \beta_{j-1} d_j$ by some uniform constants times $|\alpha-\alpha'|^{1/2}$. Since $d'_j>d_j$, it is enough to bound the first sum.

Note that $\forall x,y\in[-1/2,1/2]$, $\forall a\in (0,1)$,
\[d_{\log}(x,y)\leq M_a |x-y|^a.\]
This follows for instance from H\"older's inequality, or more simply from the following computation:
if $x,y$ have the same sign then $\big|\int_x^y dt/|t|\big| \leq \int_0^{|x-y|} dt/t$, if $x,y$ have opposite sign then $\big|\int_x^y dt/|t|\big| \leq 2\int_0^{|x-y|/2} dt/t$. It implies in particular, for all $a\in(0,1)$, 
\[d'_j \leq M_a |\alpha_j-\alpha'_j|^a\]
where $M_a$ depends only on $a$.

The first term of the sum to be bounded is $\beta_{-1}d'_0 = 1\times d_{\log}(\alpha,\alpha')$. Applying the above with $a=1/2$, we see that it is enough to bound the sum over $j\geq1$. Choose your preferred $a\in(1/2,1)$, for instance $a=3/4$. By the discussion above, the following lemma is sufficient to conclude the proof of the main theorem.

\begin{lem} Let $a\in(1/2,1)$. There exists $K>0$, which depends on $a$, such that $\forall \alpha,\alpha' \in [-1/2,1/2]$ that are irrational,
\[\sum_{j=1}^{+\infty} \beta_{j-1} |\alpha_j-\alpha'_j|^a \leq K |\alpha-\alpha'|^{1/2}
.\]
\end{lem}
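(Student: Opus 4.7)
The plan is to partition the sum at the first generation $n$ at which $\alpha$ and $\alpha'$ lie in different \emph{extended} fundamental intervals (the degenerate case $n=0$ being trivial: there $|\alpha-\alpha'|$ is bounded below by a positive constant and the conclusion follows from the uniform convergence of $\sum\beta_{j-1}$). Write
\[
\sum_{j=1}^{+\infty}\beta_{j-1}|\alpha_j-\alpha'_j|^a \;=\; S_1+S_2, \qquad S_1:=\sum_{j=1}^{n-1},\quad S_2:=\sum_{j=n}^{+\infty}.
\]
For every $j\leq n-1$, the two points lie in the same $\wt I_j$, on which $H_j$ is continuous with $|H_j'|\asymp\beta_{j-1}^{-2}$: the two branches glue across an internal kink at which $\beta_{j-1}(\alpha)$ jumps by the factor $(a+1)/a\leq 3/2$ corresponding to $\alpha_{j-1}$ crossing the value $2/(2a+1)$. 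Consequently Lemma~\ref{lem:bd} extends to $\wt I_j$ with a worse constant, yielding
\[
|\alpha_j-\alpha'_j| \;\leq\; C\,\beta_{j-1}^{-2}\,|\alpha-\alpha'| \qquad (j\leq n-1).
\]

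To estimate $S_1$, raise to the $a$-th power and multiply by $\beta_{j-1}$: each summand is at most $C^a|\alpha-\alpha'|^a\,\beta_{j-1}^{1-2a}$. Since $a>1/2$ the exponent $1-2a$ is negative, and the ratio $\beta_{j-1}/\beta_j=1/\alpha_j\geq 2$ makes $\beta_{j-1}^{1-2a}$ grow at least geometrically with $j$, so the whole series is controlled by its last term: $\sum_{j=1}^{n-1}\beta_{j-1}^{1-2a}\leq C_a\,\beta_{n-2}^{1-2a}$. The length estimate \eqref{eq:iiprime} applied to the shared extended interval gives $|\alpha-\alpha'|\leq |\wt I_{n-1}(\alpha)|\leq C'\beta_{n-2}^2$, whence $\beta_{n-2}\geq c_0|\alpha-\alpha'|^{1/2}$ and, since $1-2a<0$, $\beta_{n-2}^{1-2a}\leq c_0^{1-2a}|\alpha-\alpha'|^{(1-2a)/2}$. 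Therefore
\[
S_1 \;\leq\; C_1\,|\alpha-\alpha'|^{a+(1-2a)/2} \;=\; C_1\,|\alpha-\alpha'|^{1/2}.
\]

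For $S_2$, split off the term $j=n$ first. The remainder $\sum_{j\geq n+1}\beta_{j-1}|\alpha_j-\alpha'_j|^a$ is bounded by $\sum_{j\geq n+1}\beta_{j-1}\leq 2\beta_n$ (since every $\alpha_k\leq 1/2$), and Equation~\eqref{eq:wlb} gives $\beta_n\leq C|\alpha-\alpha'|^{1/2}$. For the isolated term $j=n$, I would use $|\alpha_n-\alpha'_n|\leq\alpha_n+\alpha'_n\leq 1$ together with $a>1/2$ to get $|\alpha_n-\alpha'_n|^a\leq(\alpha_n+\alpha'_n)^{1/2}$; the adjacent/non-adjacent dichotomy in Section~\ref{sss:esfi} yields in both cases the uniform inequality $(\alpha_n+\alpha'_n)\beta_{n-1}^2\leq C|\alpha-\alpha'|$ (trivially in the non-adjacent case since $\alpha_n+\alpha'_n\leq 1$, and directly in the adjacent case). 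Hence
\[
\beta_{n-1}|\alpha_n-\alpha'_n|^a \;\leq\; \beta_{n-1}\sqrt{\alpha_n+\alpha'_n} \;\leq\; C\,|\alpha-\alpha'|^{1/2},
\]
and summing the three contributions gives $S_2\leq C_2|\alpha-\alpha'|^{1/2}$, closing the proof with $K:=C_1+C_2$.

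The main obstacle is the first paragraph, namely verifying that bounded distortion survives the extension from fundamental to extended fundamental intervals: one must compare $\beta_{j-1}$ on the two branches glued by $\wt I_j$, which reduces to the elementary bound on $\alpha_{j-1}$ near $2/(2a_j+1)$ sketched above. Once this point is secured and the uniform inequality $(\alpha_n+\alpha'_n)\beta_{n-1}^2\leq C|\alpha-\alpha'|$ is read off from Section~\ref{sss:esfi}, the rest is pure geometric summation exploiting $a>1/2$.
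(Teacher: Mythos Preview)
Your proposal is correct and follows essentially the same three-case decomposition as the paper's proof: split at the first generation $n$ where $\alpha,\alpha'$ fall in different extended fundamental intervals, and handle $j<n$, $j=n$, $j>n$ separately using bounded distortion, the adjacent/non-adjacent dichotomy, and geometric decay of $\beta_{j-1}$ respectively. Your treatment of $j=n$ is in fact slightly slicker than the paper's, since you unify the two sub-cases into the single inequality $(\alpha_n+\alpha'_n)\beta_{n-1}^2\leq C|\alpha-\alpha'|$ and use $|\alpha_n-\alpha'_n|^a\le|\alpha_n-\alpha'_n|^{1/2}\le(\alpha_n+\alpha'_n)^{1/2}$.

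One point to clean up: your discussion of the ``obstacle'' is garbled. The quantity $\beta_{j-1}(\alpha)$ does \emph{not} jump across the kink of $\wt I_j$; it is continuous there (indeed $\wt I_j\subset I_{j-1}$, so $\alpha_0,\ldots,\alpha_{j-1}$ vary continuously). What happens at the kink is that $H_j$ has a corner, but its one-sided derivatives agree in absolute value, as the paper notes just before \eqref{eq:iiprime}. Hence bounded distortion on each of the two fundamental intervals composing $\wt I_j$ immediately gives distortion $\le e^{2C}$ on the whole extended interval---no separate analysis of ``$\alpha_{j-1}$ crossing $2/(2a_j+1)$'' is needed. (Also, beware the notation clash: you use $a$ both for the H\"older exponent and for the continued-fraction entry $a_j$.)
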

\begin{rem} Marmi, Moussa and Yoccoz realized that we cannot get a better exponent on the right hand side of the above inequality.
They also noted that taking $a\in (0,1/2)$ gives a function which is $a$-H\"older (Theorem~4.2 in~\cite{MMY97} with $\nu=1$ and $\eta=a$).
\end{rem}
\begin{proof}

Assume $\alpha\neq\alpha'$ for otherwise the  sum is $0$. As in Section~\ref{sss:esfi}, let $n\geq 0$ be the first integer for which $\alpha$ and $\alpha'$ belong to different extended fundamental intervals of generation $n$.
Let $j\geq 1$. To bound $\beta_{j-1} |\alpha_j-\alpha'_j|^a$ we will consider several cases.

\smallskip\noindent\textbf{Case 1, $j<n$:} If $j\leq n-2$, then $\alpha$ and $\alpha'$ lie in the same $j$-th generation fundamental interval.
By bounded distorsion of $H_j:\alpha\mapsto \alpha_j$ on fundamental intervals, $|\alpha_j-\alpha'_j| \leq e^{C} H_j'(\alpha) |\alpha-\alpha'| = e^{C} \beta_{j-1}^{-2} |\alpha-\alpha'|$.
For $j=n-1$, $\alpha$ and $\alpha'$ lie in the same $j$-th generation extended fundamental interval, whence $|\alpha_j-\alpha'_j| \leq e^{2C} \beta_{j-1}^{-2} |\alpha-\alpha'|$. In both cases, i.e.\ for all $j\leq n-1$, we have
\[|\alpha_j-\alpha'_j| \leq e^{2C} \beta_{j-1}^{-2} |\alpha-\alpha'|.\]
Recall that $|\alpha-\alpha'| \leq (e^C+e^{2C})\beta_{n-2}^2/2$, thus
\bEA
& \ds \frac{\beta_{j-1}|\alpha_j-\alpha'_j|^a}{|\alpha-\alpha'|^{1/2}}
\leq e^{2Ca}\left(\frac{|\alpha-\alpha'|}{\beta_{j-1}^{2}}\right)^{a-1/2}
\leq  K_4\left(\frac{\beta_{n-2}}{\beta_{j-1}}\right)^{a-1/2} \ =
& \\  & \ds =\ K_4 (\alpha_{j} \cdots \alpha_{n-2})^{a-1/2}\ \leq\ K_4\left(\frac{1}{2^{(n-1)-j}}\right)^{a-1/2}
. & \eEA
With $K_4 = e^{2Ca} \left(\frac{e^C+e^{2C}}{2}\right)^{a-1/2}$ and the convention that $\alpha_{j} \cdots \alpha_{n-2}=1$ if $j=n-1$.
 Whence
\[ \beta_{j-1}|\alpha_j-\alpha'_j|^a \leq \frac{K_4}{u^{n-1-j}} |\alpha-\alpha'|^{1/2}
\]
with $u=2^{a-1/2}>1$.

\smallskip\noindent\textbf{Case 2, $j > n$:} Then $|\alpha_j-\alpha_j'| \leq 1/2 < 1$ and 
\[\beta_{j-1} = \alpha_0 \cdots \alpha_{j-1} = \beta_{n} \alpha_{n+1} \cdots \alpha_{j-1} \leq \beta_{n} /2^{j-n-1} \leq \frac{e^{2C}}{\sqrt{2}} |\alpha-\alpha'|^{1/2} /2^{j-n-1}\] 
by Equation~\eqref{eq:wlb}. Whence
\[\beta_{j-1}|\alpha_j-\alpha'_j|^a \leq \frac{K_5}{2^{j-(n+1)}} |\alpha-\alpha'|^{1/2}\]
with $K_5 = e^{2C}\sqrt{2}$.

\smallskip\noindent\textbf{Case 3, $j=n$:} We consider two sub-cases:
\begin{itemize}
\item $\wt I_n(\alpha)$ and $\wt I_n(\alpha')$ are not adjacent. Then starting from 
$|\alpha_n-\alpha'_n| \leq 1/2<1$ and $|\alpha-\alpha'|\geq (e^{-2C}+e^{-3C})\beta_{n-1}^2/2$, 
we get 
\begin{gather*}
\beta_{n-1}|\alpha_n-\alpha'_n|^a / |\alpha-\alpha' |^{1/2} \leq  \sqrt{2/(e^{-2C}+e^{-3C})},\\
\beta_{n-1}|\alpha_n-\alpha'_n| \leq K_6 |\alpha-\alpha'|^{1/2}
\end{gather*}
with $K_6 = \sqrt{2/(e^{-2C}+e^{-3C})}$.
\item $\wt I_n(\alpha)$ and $\wt I_n(\alpha')$ are adjacent. 
Then starting from 
\[|\alpha_n-\alpha'_n| \leq 1/2 <1 \tand e^{-4C}(\alpha'_n+\alpha_{n})\beta_{n-1}^2 \leq |\alpha-\alpha'|\]
we get
\[
\ds \beta_{n-1}|\alpha_n-\alpha'_n|^a / |\alpha-\alpha'|^{1/2} \leq e^{2C}\frac{|\alpha_n-\alpha'_n|^a}
{\sqrt{\alpha_n+\alpha'_n}}.\] 
Now $|\alpha_n-\alpha'_n|\leq \max(\alpha_n,\alpha'_n)$ and $\alpha_n+\alpha'_n \geq \max(\alpha_n,\alpha'_n)$ 
thus
\[\ds \beta_{n-1}|\alpha_n-\alpha'_n|^a / |\alpha-\alpha'|^{1/2} \leq e^{2C} \frac{\max(\alpha_n,\alpha'_n)^a}{\sqrt{\max(\alpha_n,\alpha'_n)}} = e^{2C} \left(\max(\alpha_n,\alpha'_n)\right)^{a-1/2} \leq e^{2C}.
\]
Hence, 
\[ \beta_{n-1}|\alpha_n-\alpha'_n|^a \leq K'_6 |\alpha-\alpha'|^{1/2}
\]
with $K'_6 = e^{2C}$.
\end{itemize}

\medskip

\noindent By the first case:
\[\sum_{j=1}^{n-1}\beta_{j-1}|\alpha_j-\alpha'_j|^a \leq K_4\, |\alpha-\alpha'|^{1/2} \sum_{s=0}^{+\infty} \frac{1}{u^s}\]
with $u=2^{a-1/2}>1$.
By the second case:
\[\sum_{j=n+1}^{+\infty}\beta_{j-1}|\alpha_j-\alpha'_j|^a \leq K_5\, |\alpha-\alpha'|^{1/2} \sum_{s=0}^{+\infty} \frac{1}{2^s}.\]
Using the three cases altogether we thus get:
\[ \sum_{j=1}^{+\infty}\beta_{j-1}|\alpha_j-\alpha'_j|^a \leq K_7\,|\alpha-\alpha'|^{1/2}
\]
with $K_7 = \frac{K_4}{1-u}+2K_5+\on{max}(K_6,K'_6)$.
\end{proof}

This proves the lemma, which was the last step to get the theorem.

\subsection*{Acknowledgement}
The first author would like to thank the Leverhulme Trust in London for their partial financial 
support while carrying out this research.

\bibliographystyle{smfalpha}
\bibliography{main}
\end{document}